\newcommand{\gll}{\mathfrak{gl}_2}
\newcommand{\gln}{\mathfrak{gl}_n}
\newcommand{\yt}{\mathrm Y(\mathfrak{gl}_2)}
\newcommand{\sll}{\mathfrak{sl}_2}
\newcommand{\Dab}{M(\underline a,\underline b)}
\newcommand{\Lab}{L(\underline a,\underline b)}
\newcommand{\Labop}{L(\underline a^{op},\underline b^{op})}
\newtheorem{proposition}{Proposition}
\newtheorem{lemma}{Lemma}
\newtheorem{theorem}{Theorem}
\newtheorem{corol}{Corollary}
\newtheorem*{remark}{Remark}
\newtheorem{lettertheorem}{Theorem}
\theoremstyle{definition}
\newtheorem*{definition}{Definition}
\title{Simplicity of spectra for Bethe subalgebras in $\yt$}
\author{Inna Mashanova-Golikova}
\begin{document}
\maketitle
\begin{abstract}
    We consider Bethe subalgebras B(C) in the Yangian $\yt$ with $C$ regular $2\times 2$ matrix. We study the action of Bethe subalgebras of $\yt$ on finite-dimensional representations of $\yt$. We prove that $B(C)$ with real diagonal $C$ has simple spectrum on any irreducible $\yt$-module corresponding to a disjoint union of real strings. We extend this result to limits of Bethe algebras. Our main tool is the computation of Shapovalov-type determinant for the nilpotent degeneration of $B(C)$.
\end{abstract}

\section{Introduction}
\subsection{Yangians and Bethe subalgebras}
The Yangian $Y(\gln)$ is a Hopf algebra deformation of the enveloping algebra $U(\gln[z])$ of $\gln[z]$, the Lie algebra of polynomial maps $\mathbb C\to\gln$. It is one of the first examples of quantum groups. This algebra was considered in the works of L. Fadeev and St.-Petersburg school in relation with the inverse scattering method, see e.g. \cite{T,TF}. We refer the reader to \cite{M} for more details.

The algebra $Y(\gln)$ is generated by elements $t_{ij}^{(r)}$, $1\leqslant i,j\leqslant n$, $r\in \mathbb Z_{\geqslant 0}$ and $t_{ij}^{(0)}=\delta_{ij}$. (The elements $t_{ij}^{(r)}$ correspond to $E_{ij}z^r\in\gln[z]$ where $E_{ij}\in\gln$ is the standard matrix unit.) The relations are \[[t_{ij}^{(r+1)},t_{kl}^{(s)}]-[t_{ij}^{(r)},t_{kl}^{(s+1)}]=t_{kj}^{(r)}t_{il}^{(s)}-t_{kj}^{(s)}t_{il}^{(r)}.\]
Introduce the formal power series in $u^{-1}$, where $u$ is a formal variable, $$t_{ij}(u)=\sum_{r\geqslant 0}t_{ij}^{(r)}u^{-r}.$$
These formal power series can be combined into a matrix with values in formal series with coefficients in $Y(\gln)$
$$T(u)=\sum_{i,j}e_{ij}\otimes t_{ij}(u)\in \mathrm{End}(\mathbb C^n)\otimes Y(\gln)[[u^{-1}]],$$
where $e_{ij}$ is the standard matrix unit.

The Yangian $Y(\gln)$ has a filtration generated by $\deg t_{ij}^{(r)}=r$. It follows from the relations that this indeed gives a filtration on $Y(\gln)$.

In this text we are considering the case $n=2$.

It is of interest to study the Bethe subalgebras $B(C)$, a family of commutative subalgebras of $\yt$ parametrized by complex matrices $C\in Mat_2$. These subalgebras come from studying the {\em XXX} Heisenberg model. The Hamiltonian of the {\em XXX} chain with external magnetic field is the image of some element from $B(C)$ in the tensor product of evaluation representations of the Yangian $(\mathbb C^2)^{\otimes n}$. The generators of $B(C)$ then give a complete set of integrals of the XXX chain.

The algebra $B(C)$ is generated by all the coefficients of two following formal power series, $$\operatorname{qdet}T(u)=t_{11}(u)t_{22}(u-1)-t_{21}(u)t_{12}(u-1)$$ and $$\operatorname{tr}CT(u)=c_{11}t_{11}(u)+c_{12}t_{21}(u)+c_{21}t_{12}(u)+c_{22}t_{22}(u).$$
The algebra does not change under dilations of $C$, hence the family is parametrized by points in $\mathbb CP^3=\mathbb{P}(Mat_2)$.

\subsection{Limits of Bethe subalgebras}
If $C$ is a regular matrix, then $B(C)$ is a maximal commutative subalgebra of $\yt$, as shown in \cite{NO}. More precisely, for a regular $C$, all the coefficients of $\operatorname{qdet}T(u)$ and $\operatorname{tr}CT(u)$ generate $B(C)$ and are algebraically independent. The Poincare series of $B(C)$ with respect to the above filtration is $$P_{B(C)}(t)=\prod_{k=1}^{\infty}\frac{1}{(1-t^k)^2}.$$
For non-regular $C$ the Poincare series drops. Namely, the coefficients at $u^{-1}$ of $\operatorname{tr}CT(u)$ and $\operatorname{qdet}T(u)$ are both equal to $t_{11}^{(1)}+t_{22}^{(1)}$, while all other coefficients remain algebraically independent, and the Poincare series is equal to
$$P_{B (\left (\begin{smallmatrix} 1 & 0\\ 0 & 1 \end{smallmatrix}\right ) )}(t)=\frac{1}{1-t}\prod_{k=2}^{\infty}\frac{1}{(1-t^k)^2}.$$

This and the more general situation of $\gln$ have been studied in \cite{IR1}.

We study maximal commutative subalgebras, in the sense of \cite{NO}, so following \cite{IR1} we complete this smaller subalgebra to have the same Poincare series as for generic $C$. This completion is defined as the limit $$\lim_{t\to 0}B(\left (\begin{smallmatrix} 1 & 0\\ 0 & 1 \end{smallmatrix}\right )+tC')$$ and depends on the choice of direction in $\mathbb CP^3$, i.e., $C'$. We consider the family of Bethe subalgebras $B(C)$ for regular $C\in Mat_2$ and define its closure. We prove the following result:

\begin{lettertheorem}
\label{introsubalgfam}
The closure $\mathcal B$ of the family of Bethe subalgebras in $\yt$ is parametrized by the points of the blow up of $\mathbb CP^3$ at the point 
corresponding to $\left (\begin{smallmatrix} 1 & 0\\ 0 & 1 \end{smallmatrix}\right )$.

\end{lettertheorem}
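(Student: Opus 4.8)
The plan is to exhibit the assignment $[C]\mapsto B(C)$ as a morphism into a space of subalgebras that extends across the single singular point $[I]$ after one blow-up. With the filtration $\deg t^{(r)}_{ij}=r$ the associated graded $\mathrm{gr}\,Y$ is the commutative polynomial algebra on the symbols $\bar t^{(r)}_{ij}$, $r\ge1$, and for regular $C$ the graded subalgebra $\mathrm{gr}\,B(C)$ has the Poincaré series $\prod_{k\ge1}(1-t^k)^{-2}$ of the excerpt. I would take as moduli space the product of Grassmannians $\mathbf{Gr}=\prod_{d\ge1}\mathrm{Gr}(p_d,\dim\mathrm{gr}_d Y)$, where $p_d=[t^d]\prod_{k}(1-t^k)^{-2}$, and define $\beta\colon[C]\mapsto\mathrm{gr}\,B(C)$ on the regular locus $\mathbb{CP}^3\setminus\{[I]\}$. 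The first, routine point is that $\beta$ is a morphism and is injective there: the symbol of the $u^{-2}$-coefficient of $\operatorname{tr}CT(u)$ is $\sum_{j,k}c_{kj}\bar t^{(2)}_{jk}$, and modulo the $C$-independent $\operatorname{qdet}$-symbols this line recovers $[C]\in\mathbb{P}(Mat_2)=\mathbb{CP}^3$; hence $\mathrm{gr}\,B(C)$ determines $[C]$.

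The heart of the argument is the computation of the limits of $\beta$ at $[I]$. Following the excerpt, $B(I)$ is the free polynomial algebra on the $\operatorname{qdet}$-coefficients $q_r$ ($r\ge1$) and the $\operatorname{tr}T(u)$-coefficients $s_r$ ($r\ge2$), the sole collision being $s_1=q_1$; thus the number of algebra generators drops by exactly one, in degree $1$ only. Writing $C=I+tC'$ and $s_r(t)=s_r^{I}+t\,\sigma_r(C')$ with $\sigma_r(C')=\sum_{j,k}c'_{kj}t^{(r)}_{jk}$, I would show that $\lim_{t\to0}\mathrm{gr}\,B(I+tC')$ exists in $\mathbf{Gr}$, has the generic Poincaré series, and is generated by $\{q_r\}$, $\{s^I_r\}_{r\ge2}$ together with the one new degree-$1$ generator recovered by rescaling, $\lim_{t\to0}t^{-1}(s_1(t)-q_1)=\sigma_1(C')$. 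In degree $d\ge2$ the naive limit loses dimension, but the missing directions are restored by higher-order rescalings of the products $q_1s_1(t),\,s_1(t)^2,\dots$, all of which involve $C'$ only through $\sigma_1(C')$; consequently the limit fills up to the generic dimension $p_d$ and depends on $C'$ solely through the degree-$1$ datum $[\sigma_1(C')]$. Since $\sigma_1(I)=\bar t^{(1)}_{11}+\bar t^{(1)}_{22}$ is already present, this datum is a point of $\mathbb{P}(Mat_2/\mathbb{C}I)\cong\mathbb{P}^2$, and distinct directions give distinct degree-$1$ planes, hence distinct subalgebras.

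These two computations are exactly what is needed to extend $\beta$ over the blow-up. The indeterminacy of $\beta$ is concentrated at the single codimension-$3$ point $[I]$, and the limit formula shows it is resolved after one blow-up: $\beta$ extends to a morphism $\tilde\beta\colon\mathrm{Bl}_{[I]}\mathbb{CP}^3\to\mathbf{Gr}$ whose restriction to the exceptional divisor $E\cong\mathbb{P}(T_{[I]}\mathbb{CP}^3)=\mathbb{P}^2$ is the direction map $[C']\mapsto\lim_{t\to0}\mathrm{gr}\,B(I+tC')$. By the valuative criterion the closure $\mathcal B=\overline{\beta(\mathbb{CP}^3\setminus\{[I]\})}$ is the image of $\tilde\beta$, and its boundary consists precisely of the limits along arcs $C(t)\to I$, each depending only on the leading tangent direction; thus $\tilde\beta$ is onto $\mathcal B$. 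Combining injectivity off $E$ (degree-$2$ data recovers $[C]$) with injectivity along $E$ (degree-$1$ data recovers $[C']$, while degree $\ge2$ pins the base point to $[I]$), $\tilde\beta$ is a bijective morphism from the smooth projective $\mathrm{Bl}_{[I]}\mathbb{CP}^3$ onto $\mathcal B$. As the degree-$1$ and degree-$\ge2$ data vary independently along and transversally to $E$, $\tilde\beta$ is injective on tangent spaces, so it is a closed immersion and $\mathcal B\cong\mathrm{Bl}_{[I]}\mathbb{CP}^3$.

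I expect the principal difficulty to lie in the second step: proving that the Grassmannian limit $\lim_{t\to0}\mathrm{gr}\,B(I+tC')$ genuinely attains the generic dimension $p_d$ in every degree and collapses no further, uniformly in the approach. This amounts to a flatness statement for the family $\mathrm{gr}\,B(C)$ near $C=I$, for which the algebraic independence and the exact Poincaré series from \cite{NO} and the completion framework of \cite{IR1} are the decisive inputs; the bookkeeping of which higher-order rescalings are needed to fill each graded component, together with the verification that they introduce no dependence on $C'$ beyond $\sigma_1(C')$, is the most delicate part.
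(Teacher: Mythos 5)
Your overall architecture parallels the paper's: identify the limit algebras over the exceptional divisor as generated by $B(I)$ (here $I$ is the identity matrix) together with one degree-one element determined by the tangent direction $[\sigma_1(C')]\in\mathbb P(Mat_2/\mathbb C I)$, get constancy of the Poincar\'e series from \cite{NO} and \cite{IR1}, and conclude by properness plus injectivity. However, your injectivity argument on the regular locus is wrong as stated. The component $\mathrm{gr}_2 B(C)$ is not spanned by $\bar\tau_2(C)=\sum_{j,k}c_{kj}\bar t^{(2)}_{jk}$ and $C$-independent $\operatorname{qdet}$-symbols: it also contains the products $\bar q_1\bar\tau_1(C)$ and $\bar\tau_1(C)^2$ of degree-one symbols, so there is no canonical way to extract the line $\mathbb C\,\bar\tau_2(C)$ from the subspace. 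Projecting $\mathrm{gr}_2 B(C)$ to the span of the $\bar t^{(2)}_{jk}$ (killing all quadratic monomials) yields only the \emph{plane} spanned by $\bar t^{(2)}_{11}+\bar t^{(2)}_{22}$ and $\sum_{j,k}c_{kj}\bar t^{(2)}_{jk}$, and the degree-one component likewise yields only the plane $\langle \bar q_1,\bar\tau_1(C)\rangle$. Thus your mechanism determines $C$ only up to replacing it by $\alpha C+\lambda I$ --- i.e.\ it recovers a point of $\mathbb P(Mat_2/\mathbb C I)$, not $[C]\in\mathbb P(Mat_2)$. Since $B(C)$ and $B(C+\lambda I)$ are genuinely different subalgebras corresponding to different points of the blow-up away from $E$, this is exactly the ambiguity that must be excluded. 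Injectivity is true, but it is carried by the quadratic terms: one checks that $\mathrm{gr}_2 B(C+\lambda I)$ is obtained from $\mathrm{gr}_2 B(C)$ by replacing $\bar\tau_2(C)$ with $\bar\tau_2(C)-\lambda\delta$, where $\delta=\bar t^{(1)}_{11}\bar t^{(1)}_{22}-\bar t^{(1)}_{12}\bar t^{(1)}_{21}$ is the quadratic part of the $\operatorname{qdet}$ symbol, and $\delta$, being of rank $4$, cannot lie in $\mathrm{Sym}^2$ of the two-dimensional degree-one component; hence the two subspaces differ for $\lambda\neq 0$. Without some such argument your concluding step (``degree-$2$ data recovers $[C]$'') fails.

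The second problem is that the step you yourself flag as the principal difficulty --- that $\lim_{t\to 0}\mathrm{gr}\,B(I+tC')$ attains the generic dimension in every degree and that $\beta$ extends to a \emph{morphism} on the blow-up (arc-wise limits alone do not give regularity of the extension) --- is left undone, and it is precisely where the paper proceeds differently and more cheaply. Instead of computing Grassmannian limits, the paper (Proposition \ref{limalg}) realizes the blow-up explicitly as $Z\subset\mathbb{CP}^3\times\mathbb{CP}^2$ by equations and assigns to each point $x=([X],[y_0:y_1:y_2])$ the explicitly generated subalgebra $\mathcal B(x)$, generated by the center, the coefficients of $\operatorname{tr}XT(u)$, and $\tfrac12 y_0(t^{(1)}_{11}-t^{(1)}_{22})+y_1t^{(1)}_{21}+y_2t^{(1)}_{12}$; commutativity is a closed condition, the Poincar\'e series is controlled by Proposition 6.1 of \cite{IR1}, and then properness of $Z$ makes the image of this regular family automatically equal to the closure of the family over $\mathbb{CP}^3\setminus\{[I]\}$ --- no flatness estimate and no valuative-criterion bookkeeping are needed. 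Adopting this ``guess the family over $Z$ and use properness'' strategy would discharge your main open step. Note finally that you take closures in $\prod_d\mathrm{Gr}(p_d,\mathrm{gr}_d\yt)$, whereas the closure in the theorem (Section 5 of the paper) is defined via the filtered pieces $B(C)\cap F^N\yt$; the passage from filtered to graded subspaces is not everywhere regular, so in your setup you would additionally have to check that the two closures agree.
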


We denote by $Z$ this blow-up of $\mathbb CP^3$, i.e., the parameter space of the family $\mathcal B$.

\begin{remark}
The family $\mathcal B$ is a flat family of commutative subalgebras of $\yt$ over $Z$. In \cite{IR2} the definition of $B(C)$ is extended to the points of the De Concini-Procesi compactification of the adjoint Lie group of the Lie algebra for which the Yangian is defined. And it is expected that the limit space is some resolution of the De Concini-Procesi compactification. In our case the algebra is $\gll$ and the corresponding group is $PGL(2,\mathbb C)$. Its De Concini-Procesi compactification is $\mathbb CP^3$.
\end{remark}

\subsection{Representations of $\yt$ and the {\em XXX} chain.}
One can define the action of $\yt$ on a representation of $\gll$ using the evaluation morphism $\yt\to U(\gll)$, $$t_{ij}(u)\mapsto \delta_{ij}+E_{ij}u^{-1}.$$ We will call these representations of $\yt$ the evaluation representations. Since $\yt$ is a Hopf algebra, it also acts on tensor products of the evaluation representations. Any finite-dimensional irreducible representation of $\yt$ is isomorphic to a submodule or a quotient of a tensor product of evaluation representations with respect to the Hopf algebra structure on $\yt$.

We consider the action of $B(C)$ on finite-dimensional $\yt$ modules. These modules are the state spaces for the {\em XXX} chain and the conservation laws are the elements of $B(C)$.

Let $L(a,b)$ denote the evaluation representation of $\yt$ which comes from the finite-dimensional representation of $\gll$ with highest weight $(a,b)$. Then $\mathcal B(x)$ acts on $\Lab=L(a_1,b_1)\otimes\ldots\otimes L(a_n,b_n)$ for any $x\in Z$.

In \cite{MTV2} it has been shown that for representations $L(a_1,a_1-1)\otimes\ldots\otimes L(a_n,a_n-1)$, where $a_1,\ldots,a_n$ are generic, the eigenspaces for the action of $B(\left (\begin{smallmatrix} 1 & 0\\ 0 & 1 \end{smallmatrix}\right ))$ are irreducible $\gll$ submodules.

\subsection{Main results}
We extend the results of \cite{MTV2} as follows.

A string is a set $S(a,b)=\{a-1,a-2,\ldots,b+1,b\}$ for $a,b\in\mathbb C$, $a>b$. It is known that the representation $\Lab$ is irreducible if and only if, for any $1\leqslant i<j\leqslant n$, one of three possibilities hold: $S(a_i,b_i)\cup S(a_j,b_j)$ is not a string, or $S(a_i,b_i)\subset S(a_j,b_j)$, or $S(a_i,b_i)\supset S(a_j,b_j)$.

In the paper we prove the following two results:

\begin{lettertheorem}
\label{introcyclvec}
The action of any algebra in the family $\mathcal B$ in $L(a_1,b_1)\otimes\ldots\otimes L(a_n,b_n)$ has a cyclic vector, if, for any $1\leqslant i<j\leqslant n$, $S(a_i,b_i)\cup S(a_j,b_j)$ is not a string.
\end{lettertheorem}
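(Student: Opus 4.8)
The plan is to detect cyclicity through a Shapovalov-type determinant evaluated at the most degenerate members of the family, and to spread the conclusion over $Z$ using flatness. First note that the hypothesis — $S(a_i,b_i)\cup S(a_j,b_j)$ is not a string for all $i<j$ — is one of the three options in the irreducibility criterion, so $\Lab$ is an irreducible $\yt$-module. Hence $\operatorname{qdet}T(u)$ acts by scalars, and for every $x\in Z$ the action of $\mathcal B(x)$ on $\Lab$ is generated by the images of the coefficients of $\operatorname{tr}CT(u)$ and their limits. For a fixed vector $v$ the assignment $x\mapsto\dim\mathcal B(x)v$ is the rank of a family of linear maps whose matrix entries are regular on $Z$, and rank is lower semicontinuous; thus $\{x:\mathcal B(x)v=\Lab\}$ is open. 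Cyclicity is therefore an open condition on $Z$, and the delicate fibers are the degenerate ones sitting over the exceptional divisor, i.e.\ the nilpotent degenerations of $B(C)$, while the regular members form the generic (open) stratum. I take as candidate cyclic vector the highest weight vector $v^+=v_1^+\otimes\cdots\otimes v_n^+$, which is a joint eigenvector for the diagonal part and for $\operatorname{qdet}T(u)$, so that all the motion comes from the off-diagonal generators.

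Next I would analyze the nilpotent degeneration $\mathcal B(x_0)=\lim_{t\to0}B(\left(\begin{smallmatrix}1&0\\0&1\end{smallmatrix}\right)+tC')$ with $C'$ nilpotent: its generators are the coefficients of $\operatorname{qdet}T(u)$ and of $\operatorname{tr}T(u)$ (acting on $v^+$ by scalars) together with a commuting family of lowering-type series extracted from $t_{21}(u)$, so that $\mathcal B(x_0)v^+$ is built by applying these lowering operators to $v^+$. To test whether this exhausts $\Lab$ I would use the contravariant form associated with the transposition anti-automorphism $\tau\colon T(u)\mapsto T(u)^{t}$, normalized by $\langle v^+,v^+\rangle=1$ and satisfying $\langle a\,x,y\rangle=\langle x,\tau(a)y\rangle$; this form is nondegenerate on the irreducible $\Lab$. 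Cyclicity of $v^+$ is then equivalent to the full-rankness of the Gram matrix $\big(\langle m_\alpha v^+,\,m_\beta v^+\rangle\big)_{\alpha,\beta}$, where $\{m_\alpha\}$ runs over an ordered set of monomials in the degenerate generators that is forced to span by weight and degree counting. It is convenient to carry out this computation on the Verma-type module $\Dab$, where the entries of the Gram matrix are polynomials in the endpoints $a_i,b_i$, and then to specialize to the irreducible quotient $\Lab$, along which cyclicity descends.

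The heart of the argument is the explicit evaluation of this Shapovalov-type determinant. I expect it to factor, in the spirit of Kac--Kazhdan/Shapovalov determinants, into a product of linear factors in the differences of the string endpoints $a_i,b_i,a_j,b_j$, with the combinatorics of the factorization governed by how the lowering is distributed among the $n$ tensor factors (distinctness of the evaluation parameters supplying the Vandermonde-type nondegeneracy that separates the individual sites). The key point to nail down is that the union $S(a_i,b_i)\cup S(a_j,b_j)$ is a string precisely when one of the resonance relations among these endpoints holds, and that these resonances are exactly the zero locus of the factors of the determinant; under the hypothesis no such resonance occurs, so the determinant is nonzero and $v^+$ is cyclic at $x_0$. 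The main obstacle is this bookkeeping: identifying the correct spanning set of monomials, proving that the Gram determinant factors cleanly, and matching its vanishing locus term-by-term with the failure of the string condition. Once cyclicity is secured on the exceptional divisor, openness of the cyclic locus together with the generic (regular) case propagates it across $Z$, yielding a cyclic vector for every algebra in the family $\mathcal B$.
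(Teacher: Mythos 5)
Your proposal has the right flavor in one respect---the paper's key computation is indeed a Shapovalov-type determinant whose linear factors are resonances $a_i-b_j-l$, combined with an openness argument---but it conflates two different degenerate algebras, and this conflation breaks the core of your plan. The ``nilpotent degeneration'' for which the paper computes the determinant is $B(e_{12})$, the Bethe subalgebra of the nilpotent \emph{matrix} $e_{12}$: this is a regular member of the family, lying over $\mathbb CP^3\setminus\{[\left(\begin{smallmatrix}1&0\\0&1\end{smallmatrix}\right)]\}$, not over the exceptional divisor. For $B(e_{12})$ the operators acting nontrivially on $\Dab$ are the $n$ commuting coefficients $t_{21}^{(1)},\ldots,t_{21}^{(n)}$ of $t_{21}(u)$, so the monomials $(t_{21}^{(1)})^{k_1}\cdots(t_{21}^{(n)})^{k_n}v^+$ with $\sum k_i=m$ number exactly $\dim M_m$ and a determinant can certify spanning (Proposition \ref{shapdetyang}). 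By contrast, the algebra you propose to analyze, $\mathcal B(x_0)=\lim_{t\to 0}B(\left(\begin{smallmatrix}1&0\\0&1\end{smallmatrix}\right)+tC')$ over the exceptional divisor, is generated by $B(\left(\begin{smallmatrix}1&0\\0&1\end{smallmatrix}\right))$ together with a \emph{single} element of $\sll$ (Proposition \ref{limalg}); it does not contain a commuting family of lowering series. Since $B(\left(\begin{smallmatrix}1&0\\0&1\end{smallmatrix}\right))$ preserves weight spaces and fixes the highest weight line, the lowering part produces only one vector per weight space, so no weight-and-degree count forces your monomials to span, and your Gram determinant setup collapses precisely on the fibers you call delicate. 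The paper handles these limit algebras by an entirely different argument: the $\sll$-isotypic decomposition $V=\bigoplus_\lambda V_\lambda\otimes W_\lambda$, cyclicity for $B(\left(\begin{smallmatrix}1&1\\0&1\end{smallmatrix}\right))$, and a Jordan-decomposition/associated-graded argument for a lift of the Casimir (Lemma \ref{componentcyclvect}, Proposition \ref{cyclvectlimit}).

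Two further gaps. First, your propagation step is insufficient: openness of the cyclic locus plus cyclicity on the exceptional divisor yields only an open neighbourhood of that divisor (equivalently, a dense open subset of $Z$), while the theorem claims cyclicity at \emph{every} $x\in Z$; the ``generic (regular) case'' is not free. The paper's mechanism is equivariance: the automorphism $T(u)\mapsto AT(u)A^{-1}$ carries $B(C)$ to $B(A^{-1}CA)$ and acts by an isomorphism of $\Lab$, and $\lambda A^{-1}CA$ can be placed in any neighbourhood of $e_{12}$; hence openness \emph{at the single point} $e_{12}$ covers all regular nonscalar $C$ (Proposition \ref{cyclvectnonlimit}). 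Your direction of propagation cannot be repaired this way, because conjugation preserves the characteristic polynomial up to scale, so the orbit closure of a regular class never meets the exceptional divisor. Second, replacing the paper's transition-matrix determinant by a Gram matrix for the contravariant form is technically flawed: the form on $\Dab$ degenerates exactly when some $a_i-b_i\in\mathbb Z_{\geqslant 0}$, which is forced whenever finite-dimensional quotients $\Lab$ exist, so beyond the first singular vector the Gram determinant vanishes identically on the parameter locus of interest and certifies nothing after specialization (moreover, for $\yt$ a contravariant form on a tensor product is not the naive product form, since $\tau$ intertwines $\Delta$ with $\Delta^{opp}$; building it is the content of Section 7). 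The paper avoids both problems by taking the determinant of the matrix expressing the monomials in a basis of the weight space of $\Dab$---no form required---and descending cyclicity along $\Dab\twoheadrightarrow\Lab$. Finally, note that the factors of $D_m$ are $a_i-b_j-l$ only for $i>j$, so the zero locus is \emph{not} literally ``the union is a string'': one must first reorder the tensor factors (possible under the hypothesis) so that $a_i-b_j\notin\mathbb Z_{\geqslant 0}$ for all $i>j$, a step your sketch omits.
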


We reduce this statement to the case of the principal nilpotent $C=e_{12}$ using that any non-scalar matrix in $\gll$ can be taken to any open neighbourhood of the principal nilponent by conjugation and dilation. Then we use that the condition of having a cyclic vector is open.

The case of the principal nilpotent is treated by proving that in the tensor product of the corresponding $\gll$ Verma modules as representations of $\yt$, the product of highest weight vectors is cyclic for $B(e_{12})$.

Secondly, we restrict to the closure of the subfamily corresponding to real diagonal matrices parametrized by the points of $\mathbb RP^1\simeq Z'\subset Z$.

\begin{lettertheorem}
\label{introsimpspec}
For any $x\in Z'$ and any $a_1,b_1\ldots,a_n,b_n\in\mathbb R$ such that $S(a_i,b_i)\cup S(a_j,b_j)$ is not a string for each pair $i,j$, the subalgebra $\mathcal B(x)$ acts on $L(a_1,b_1)\otimes\ldots\otimes L(a_n,b_n)$ with simple spectrum.
\end{lettertheorem}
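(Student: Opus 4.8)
The plan is to use the standard criterion that a commutative algebra $A$ acting on a finite-dimensional space $V$ has simple joint spectrum if and only if $V$ is a cyclic $A$-module and $A$ acts semisimply, i.e.\ by simultaneously diagonalizable operators. Cyclicity is already supplied by Theorem~\ref{introcyclvec}, whose hypothesis coincides with the one assumed here; in particular the condition that $S(a_i,b_i)\cup S(a_j,b_j)$ be a non-string for every pair forces $\Lab$ to be irreducible, by the criterion recalled above. Thus the whole problem reduces to showing that every operator in the image of $\mathcal B(x)$ is diagonalizable, equivalently that no Jordan block of size $\geqslant 2$ occurs.

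To obtain semisimplicity I would equip $\Lab$ with a contravariant Hermitian form. Fixing the compact anti-involution $E_{ij}\mapsto E_{ji}$ of $\gll$ gives a positive-definite invariant form on each evaluation factor $L(a_i,b_i)$ with $a_i,b_i\in\mathbb R$, and the induced anti-linear anti-automorphism of $\yt$ (the transpose $t_{ij}(u)\mapsto t_{ji}(u)$, taken anti-linearly over $\mathbb C$) should admit a contravariant Hermitian form on the tensor product $\Lab$ for real parameters. With respect to this anti-involution the generator $\operatorname{tr}CT(u)=c_1t_{11}(u)+c_2t_{22}(u)$ of $\mathcal B(x)$ for a regular real diagonal $C=\operatorname{diag}(c_1,c_2)$ is fixed, while $\operatorname{qdet}T(u)$ is central and self-adjoint; hence every generator of $\mathcal B(x)$ acts by a self-adjoint operator. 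A commuting family of operators that are self-adjoint with respect to a \emph{positive-definite} Hermitian form is simultaneously diagonalizable, which together with cyclicity would give simple spectrum on the regular locus of $Z'$.

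The crux — and the step I expect to be the main obstacle — is that self-adjointness by itself does not preclude Jordan blocks unless the form is positive-definite, since a self-adjoint operator for an \emph{indefinite} contravariant form may well have a nilpotent summand. Establishing positivity for all real non-string parameters is exactly where I would deploy the Shapovalov-type determinant: computing the determinant of the contravariant form in a Gelfand--Tsetlin-type basis and reading off its zeros, the non-string hypothesis should force nonvanishing throughout the admissible real locus. This determinant is most accessible at the nilpotent degeneration $C=e_{12}$ and on the tensor product of Verma modules used in the proof of Theorem~\ref{introcyclvec}, where the triangular structure makes a product formula attainable. Any admissible configuration can then be deformed, within the non-string locus, to one with widely separated strings, where the form is a tensor product of positive-definite factors; as the signature of a Hermitian form is locally constant wherever its determinant is nonzero, positivity propagates along such a deformation.

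Finally, to reach the limit point of $Z'$ lying over the exceptional divisor of the blow-up, I would argue by closedness. By Theorem~\ref{introsubalgfam} the generators of $\mathcal B(x)$, including the rescaled ones produced by the completion, depend algebraically on $x\in Z'$ and act on the \emph{fixed} space $\Lab$ equipped with the \emph{fixed} positive-definite form; since $\langle Xv,w\rangle=\langle v,Xw\rangle$ is a closed condition on $X$, the limiting operators remain self-adjoint (a rescaled derivative of a self-adjoint family is again self-adjoint). They are therefore simultaneously diagonalizable, and cyclicity at the limit point is once more furnished by Theorem~\ref{introcyclvec}, which holds at every point of $\mathcal B$. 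This yields simple spectrum for every $x\in Z'$.
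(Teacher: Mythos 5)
Your overall strategy is the same as the paper's (Sections 7--8): simple spectrum is reduced to cyclicity (Theorem \ref{introcyclvec}) plus simultaneous diagonalizability, and the latter is to come from self-adjointness of $\mathcal B(x)$ with respect to a positive-definite contravariant Hermitian form for the transpose anti-automorphism $t_{ij}(u)\mapsto t_{ji}(u)$; your observations that $\operatorname{qdet}T(u)$ and $\operatorname{tr}CT(u)$ for real diagonal $C$ are fixed by this anti-automorphism, and that self-adjointness passes to the limit point of $Z'$ by closedness, agree with the paper. You also correctly isolate the crux: positive definiteness of the form. The paper handles this step differently from your plan: it builds the form explicitly as the product of the $\gll$-forms twisted by the $R$-matrix intertwiner $\sigma^R$ from $\Lab$ to $\Labop$, gets Hermitian symmetry from the braid relations (Lemma \ref{selfadj}), and gets positivity (Lemma \ref{poseigval}) by a continuity argument on the region where all $|a_i-a_j|>1$: there the relevant determinant cannot vanish, and all eigenvalues are positive at infinite separation. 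Note that the paper proves Theorem \ref{hermform} only under the ordering hypothesis $a_1>b_1>a_2>b_2>\cdots>a_n>b_n$.

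The genuine gap is your deformation step, and it sits exactly at the point that forces the paper's ordering hypothesis. The real non-string locus is disconnected (string lengths are fixed along any deformation), so an admissible configuration cannot always be deformed inside it to a widely separated one. Concretely, take $n=2$ and $L(a_1,a_1-1)\otimes L(a_2,a_2-1)$ with $u=a_1-a_2$ real: the non-string condition is $u\notin\{0,1,-1\}$, while wide separation needs $|u|>1$, so the component $0<u<1$ cannot be deformed out. Worse, on that component positivity is simply false: the contravariant form is the product form composed with the intertwiner $Flip_{12}\circ R_{12}(u)$, so its Gram matrix is, up to the flip identification, $1-u^{-1}P_{12}$, whose eigenvalues $1-u^{-1}$ and $1+u^{-1}$ have opposite signs when $0<|u|<1$; since the module is irreducible, the contravariant Hermitian form is unique up to a real scalar, so \emph{every} such form is indefinite, and no determinant computation can establish positivity. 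For example, $L(1/2,-1/2)\otimes L(0,-1)$ satisfies the hypotheses of the theorem, yet no ordering of its factors satisfies the hypothesis of Theorem \ref{hermform}; a direct computation (say with $C=\operatorname{diag}(1,0)$) shows the joint spectrum is nevertheless simple, so these components require a genuinely different argument --- this is in fact a gap in the paper's own Section 8 application as well, which your write-up makes visible rather than resolves. A separate, smaller confusion: the Shapovalov-type determinant of the paper is the determinant of the $B(e_{12})$-action on tensor products of Verma modules and serves only to prove cyclicity; the contravariant form does not depend on $C$ at all, so ``computing it at the nilpotent degeneration $C=e_{12}$'' is not meaningful --- what your plan needs is the Gram determinant of the form itself, a different object.
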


We introduce a Hermitian form on real finite-dimensional representations of $\yt$ that extends the Hermitian form on $L(a,b)$ for $\gll$. This form has been discussed in appendix C in \cite{MTV1}. The generators of subalgebras $\mathcal B(x)$ for $x\in Z'$ act with self-adjoint operators with respect to this form. To prove Theorem \ref{introsimpspec} we use that if a commutative algebra acts on a representation with self-adjoint operators and with a cyclic vector, then it has simple spectrum in this representation.

\subsection{Monodromy problem}
We can construct a branched covering of $Z$ for any representation $\Lab$ satisfying the conditions of Theorem $\ref{introcyclvec}$. Define a subvariety of pairs $\mathcal P$ in $Z\times\mathbb P(\Lab)\supset\{(x,l)\mid x\in Z, \text{ } l \text{ is an eigenline for the action of } \mathcal B(x) \text{ in } \Lab\}$. Then we have the projection map $\pi \colon\mathcal P\to Z$.

For any $x\in Z$ the preimage $\pi^{-1}(x)$ is finite since $\Lab$ has a cyclic vector for $\mathcal B(x)$ for any $x\in Z$ by Theorem \ref{introcyclvec}. Thus this is a finite morphism. By theorem \ref{introsimpspec} there is a point $x\in Z$ for which the number of preimages is maximal and is equal to $\dim\Lab$. Therefore this is a $\dim \Lab$-fold branched covering.

From Theorem \ref{introsimpspec} it follows that the restriction of this map to $Z'\subset Z$ is an unbranched covering. It would be interesting to study the monodromy on this covering.

\subsection{The paper is organized as follows}
In section 2 we introduce the necessary objects and notation.

In section 3 we prove the main technical lemma concerning an analog of the Shapovalov determinant \cite{Sh}.

In section 4 we use the results of Section 3 to show that the Bethe subalgebra corresponding to $e_{12}$ has a cyclic vector in the tensor product of $\gll$ Verma modules.

In section 5 we define families of subalgebras and their closures. We discuss what holds in the case of the family of Bethe subalgebras and prove Theorem \ref{introsubalgfam}.

In section 6 we conclude from the existence of a cyclic vector for $B(e_{12})$ for tensor products of Verma modules that irreducible products of evaluation representations have a cyclic vector (Theorem \ref{introcyclvec}).

In section 7 we define unitarity for representations of $\yt$ and show that irreducible tensor products of evaluation representations have the unitary structure coming from the unitary structure for the action of $\gll$.

In section 8 we combine the results of the previous two sections and show that the subfamily of Bethe subalgebras corresponding to real diagonal matrices has simple spectrum in irreducible products of evaluation representations of $\yt$ (Theorem \ref{introsimpspec}). This subfamily is parametrized by $\mathbb RP^1$ therefore we get a cover of $\mathbb RP^1$ whose fibers are the eigenlines in the representation.

\subsection{Acknowledgements.} We would like to thank L. Rybnikov for many insighful discussions and profound attention to our work. We also would like to thank V. Vologodsky for many helpful discussions. This research was carried out within the HSE University Basic Research Program and funded by the Russian Academic Excellence Project '5-100'.

\section{Preliminaries}
In this section we follow the exposition in \cite{M}.

As we discussed above, the Yangian $\yt$ is an associative algebra generated by the elements $t_{ij}^{(r)}$, $1\leqslant i,j\leqslant 2$, $r\in \mathbb Z_{\geqslant 0}$, $t_{ij}^{(0)}=\delta_{ij}$, which can be combined into formal power series in $u^{-1}$: $$t_{ij}(u)=\sum_{r\geqslant 0}t_{ij}^{(r)}u^{-r},$$ which themselves can be combined into a matrix with values in formal series with coefficients in $\yt$
$$T(u)=\sum_{i,j}e_{ij}\otimes t_{ij}(u)\in \mathrm{End}(\mathbb C^2)\otimes\yt[[u^{-1}]].$$

The defining relations are $$[t_{ij}^{(r+1)},t_{kl}^{(s)}]-[t_{ij}^{(r)},t_{kl}^{(s+1)}]=t_{kj}^{(r)}t_{il}^{(s)}-t_{kj}^{(s)}t_{il}^{(r)}$$
and can be rewritten as $$(u-v)[t_{ij}(u),t_{kl}(v)] = t_{kj}(u)t_{il}(v)-t_{kj}(v)t_{il}(u).$$

For $1\leqslant k<l\leqslant n$ define $$P_{kl}=\sum_{i,j}1\otimes\ldots\otimes 1\otimes e_{ij}\otimes 1\otimes\ldots\otimes 1\otimes e_{ji}\otimes 1\otimes\ldots\otimes 1\in\mathrm{End}(\mathbb C^2)^{\otimes n}$$
where $e_{ij}$ and $e_{ji}$ are correspondingly in the $k$'th and $l$'th positions. We define the $R$-matrix as $$R_{kl}(u)=1^{\otimes n}-u^{-1}P_{kl}.$$

The $R_{ij}$'s satisfy the Yang-Baxter equation: $$R_{12}(u)R_{13}(u+v)R_{23}(v)=R_{23}(u)R_{13}(u+v)R_{12}(v).$$

The defining relations of $\yt$ can be rewritten in the matrix form: let 
$$T_a(u)=\sum_{i,j} 1^{\otimes (a-1)}\otimes e_{ij}\otimes 1^{\otimes (n-a)}\otimes t_{ij}(u)\in \mathrm{End}(\mathbb C^2)^{\otimes n}\otimes\yt$$ 
where $1$ is the identity matrix. Then the relations can be written in the eqiuvalent form, we will call it the RTT relation (we omit tensoring the $R$-matrix by $1\in\yt$) 
$$R(u-v)T_1(u)T_2(v)=T_2(v)T_1(u)R(u-v).$$

For any formal series $f(u)\in 1+u^{-1}\mathbb C[[u^{-1}]]$, for any matrix $G\in GL_2(\mathbb C)$, or for any $c\in\mathbb C$ one can define an automorphism of the Yangian $\yt$:
\begin{equation}
\label{multaut}
    T(u)\mapsto f(u)T(u),
\end{equation}
\begin{equation}
    T(u)\mapsto GT(u)G^{-1}.
\end{equation}
\begin{equation}
\label{shiftaut}
    T(u)\mapsto T(u - c).
\end{equation}

In this text we are considering Bethe subalgebras. They are maximal commutative subalgebras of $\yt$. All of them contain the center of the Yangian which is generated by the coefficients of the quantum determinant $$\operatorname{qdet}T(u)=t_{11}(u)t_{22}(u-1)-t_{21}(u)t_{12}(u-1).$$

Bethe subalgebras are parametrized by elements $C=\left ( \begin{smallmatrix}c_{11}&c_{12}\\c_{21}&c_{22}\end{smallmatrix} \right )\in\mathrm{End}(\mathbb C^2)$ and the corresponding Bethe subalgebra $B(C)$ is generated by the coefficients of $\operatorname{qdet}T(u)$ and the coefficients of $$\operatorname{tr}CT(u)=c_{11}t_{11}(u)+c_{12}t_{21}(u)+c_{21}t_{12}(u)+c_{22}t_{22}(u).$$

We will be using in this text that Bethe subalgebras $B(C)$ are stable under the automorphism (\ref{multaut}) of $\yt$ of multiplication by a formal series $T(u)\mapsto f(u)T(u)$.

Now we will introduce the $\yt$ modules we are discussing in this text.

From the relations and the Poincar\'e-Birkhoff-Witt theorem for $\yt$ it follows that the map $E_{ij}\mapsto t_{ij}^{(1)}$ gives an embedding of the universal enveloping algebra $U(\gll)\subset\yt$. It also follows that the map $t_{ij}(u)\mapsto\delta_{ij}+E_{ij}u^{-1}$ defines a surjective homomorphism $\yt\to U(\gll)$, we call it the evaluation map. It is clear that their composition is identity on $U(\gll)$.

Using the evaluation map, one can define the action of $\yt$ on $U(\gll)$-modules. In this text we are considering finite-dimentional irreducible modules $L(a,b)$, Verma modules $M(a,b)$ and contragredient modules $M^{\vee}(a,b)$, all with highest weight $(a,b)$ such that $a-b\in\mathbb Z_{\geqslant 0}$.

The Yangian $\yt$ is a Hopf algebra with the comultiplication $\Delta\colon\yt\to\yt\otimes\yt$ defined by $$\Delta\colon t_{ij}(u)\mapsto \sum_{k=1}^2 t_{ik}(u)\otimes t_{kj}(u).$$
Hence we can define the action of $\yt$ in the tensor products of evaluation representations.

We will also use another comultiplication $\Delta^{opp}$ on $\yt$ which is a composition of $\Delta$ with the linear operator on $\yt\otimes\yt$ exchanging the two factors:
$$\Delta^{opp}\colon t_{ij}(u)\mapsto\sum_{k=1}^2 t_{kj}(u)\otimes t_{ik}(u)$$

Any irreducible finite-dimensional representation of $\yt$ can be realized as a submodule and as a quotient module of a tensor product of two-dimensional evaluation representations.

We will denote $\sigma^R_{ij}(u)=Flip_{ij}\circ R_{ij}(u)$ the composition of the $R$-matrix and the linear operator exchanging $i$'th and $j$'th tensor factors. The $Flip$ operator and the $R$-matrix commute and the Yang-Baxter equation for $R$-matrices can be rewritten in the form $$\sigma^R_{23}(u)\sigma^R_{12}(u+v)\sigma^R_{23}(v)=\sigma^R_{12}(u)\sigma^R_{23}(u+v)\sigma^R_{12}(v)$$
which gives the braid group relations on $\sigma^R_{ij}$'s.

From the Yang-Baxter equation it follows that $\sigma^R_{12}(u)$ gives a homomorphism of $\yt$ representations
$$\sigma^R_{12}(a_1-a_2)\colon L(a_1,a_1-1)\otimes L(a_2,a_2-1)\to L(a_2,a_2-1)\otimes L(a_1,a_1-1).$$
If $|a_1-a_2|\neq 1$ then it is an isomorphism. If $a_1-a_2=1$, then its kernel is the $3$-dimensional representation $L(a_1,a_1-2)$ and its image is one-dimensional. If $a_1-a_2=-1$, then its kernel is a one-dimensional subrepresentation and its image is $L(a_2,a_2-2)$.

Consider the map $$\sigma^R(a_1,\ldots,a_n)\colon L(a_1,a_1-1)\otimes\ldots\otimes L(a_n,a_n-1)\to L(a_n,a_n-1)\otimes\ldots\otimes L(a_1,a_1-1),$$ $$\sigma^R(a_1,\ldots,a_n)=\prod_{1\leqslant l\leqslant n-1}^{\leftarrow} \prod_{1\leqslant k\leqslant n-l}^{\leftarrow} \sigma^R_{k,k+1}(a_{l}-a_{l+k})=$$
$$
\sigma^R_{12}(a_{n-1}-a_n)\ldots\sigma^R_{n-2,n-1}(a_2-a_n)\ldots\sigma^R_{12}(a_2-a_3)\sigma^R_{n-1,n}(a_1-a_n)\ldots\sigma^R_{23}(a_1-a_3)\sigma^R_{12}(a_1-a_2).
$$

We will be using the following well-known statement.

\begin{proposition}
\label{rephomom}
The map $\sigma^R(a_1,\ldots,a_n)$ is a homomoprhism of $\yt$ representations.
If $a_i=a_1+i-1$ for each $i$, then its image is isomorphic to the evaluation representation $L(a_n,a_n-n)$ up to an automorphism of $\yt$ of multiplication by a formal series.
\end{proposition}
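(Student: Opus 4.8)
The plan is to handle the two assertions in turn. For the homomorphism property, I would observe that each elementary factor $\sigma^R_{k,k+1}(u)=Flip_{k,k+1}\circ R_{k,k+1}(u)$ is, by the RTT relation (equivalently the Yang--Baxter equation), an intertwiner of $\yt$-modules that transposes the two adjacent tensor factors it acts on while acting as the identity on—and commuting with the Yangian coproduct built from—all the others; this is precisely the two-factor statement recalled immediately before the proposition, transplanted to positions $k,k+1$. The one thing to verify is that in the product defining $\sigma^R(a_1,\ldots,a_n)$ the spectral argument $a_l-a_{l+k}$ of each factor equals the difference of the parameters of the two representations that actually occupy positions $k$ and $k+1$ at that stage of the composition. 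This is the standard bookkeeping for a reduced word of the permutation reversing $(1,\ldots,n)$, and its internal consistency (independence of the reduced word) is guaranteed by the braid relation for the $\sigma^R_{ij}$ displayed above. A composition of intertwiners is an intertwiner, which gives the first claim.

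For the second assertion, put $a_i=a_1+i-1$ and let $\xi=e_1^{\otimes n}$ be the highest weight vector of the source. Each factor multiplies $e_1\otimes e_1$ by $1-u^{-1}$, and this property is preserved along the composition, so $\sigma^R(a_1,\ldots,a_n)\,\xi=\lambda\,\xi'$, where $\xi'=e_1^{\otimes n}$ is the highest weight vector of the target and $\lambda=\prod_l\prod_k(1+k^{-1})\neq0$; hence the image is nonzero and contains $\xi'$. Using the coproduct, the diagonal series act on $\xi'$ by $t_{11}(u)\xi'=\prod_i(1+a_iu^{-1})\,\xi'$ and $t_{22}(u)\xi'=\prod_i(1+(a_i-1)u^{-1})\,\xi'$. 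Twisting by the multiplication automorphism~(\ref{multaut}) with $f(u)=\prod_{i=1}^{n-1}(1+a_iu^{-1})^{-1}\in 1+u^{-1}\mathbb C[[u^{-1}]]$ telescopes these products to $1+a_nu^{-1}$ and $1+(a_n-n)u^{-1}$—here the consecutiveness enters through $a_1-1=a_n-n$—which are exactly the highest weight data of the evaluation module $L(a_n,a_n-n)$.

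It remains to identify the image as a whole. Restricting the action to $\gll\subset\yt$ via $E_{ij}\mapsto t_{ij}^{(1)}$, where the coproduct is primitive, the target restricts to the ordinary (hence semisimple) $\gll$-module $(\mathbb C^2)^{\otimes n}$, and the image is a $\gll$-submodule. Since it contains $\xi'$, it contains the $\gll$-submodule generated by $\xi'$, namely the top symmetric component $\cong L(a_n,a_n-n)$ of dimension $n+1$, so one inclusion is automatic. For the reverse inclusion I would argue by induction on $n$, the two-factor case being the statement recalled before the proposition. Factoring $\sigma^R(a_1,\ldots,a_n)=[\sigma^R(a_2,\ldots,a_n)\otimes\mathrm{id}]\circ F_1$, where $F_1=\sigma^R_{n-1,n}(a_1-a_n)\cdots\sigma^R_{12}(a_1-a_2)$ carries the first strand to the last position, the inductive hypothesis fuses the last $n-1$ strands into the block $L(a_n,a_n-(n-1))$, so the image lies in $L(a_n,a_n-(n-1))\otimes L(a_1,a_1-1)$. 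The union of the associated strings $\{a_1,\ldots,a_n-1\}\cup\{a_1-1\}$ is again a string with neither contained in the other, so this two-factor product is reducible; in the present (decreasing-parameter) order its unique proper nonzero submodule is the socle $L(a_n,a_n-n)$, so once one knows the image is a \emph{proper} submodule it must equal this socle, completing the induction.

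The step I expect to be the main obstacle is exactly this properness: one must show that the degenerate $R$-matrices at the special consecutive arguments genuinely annihilate the complementary component, equivalently that $\mathrm{Sym}^{n-1}\otimes\mathbb C^2\cong\mathrm{Sym}^n\oplus\mathrm{Sym}^{n-2}$ is cut down to its $\mathrm{Sym}^n$ summand, i.e.\ that the rank of $\sigma^R(a_1,\ldots,a_n)$ is exactly $n+1$. This is where the hypothesis $a_i=a_1+i-1$ is indispensable: only at these values does some $\sigma^R_{12}$ reach the point $u=-1$, where $Flip_{12}\circ R_{12}(-1)=P_{12}+1$ degenerates to twice the symmetrizer, and the asserted rank is precisely the content of the classical fusion procedure. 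By contrast, the homomorphism property and the highest weight bookkeeping are comparatively routine.
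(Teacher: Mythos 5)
Your intertwiner argument and your highest-weight computation (including the telescoping twist $f(u)=\prod_{i=1}^{n-1}(1+a_iu^{-1})^{-1}$) are correct, and they correspond to what the paper does via Drinfeld polynomials. The divergence is in identifying the image, and here your proof does not actually close the loop. The paper does this in one stroke: by Proposition 1.6.3 of \cite{M}, the map $\sigma^R(a_1,\ldots,a_1+n-1)$ \emph{is} the symmetrization of $(\mathbb C^2)^{\otimes n}$ composed with reversal of the factors, so its image is $\mathrm{Sym}^n\mathbb C^2$, and since the symmetric highest weight vector lies in the image, the image is the twist of $L(a_n,a_n-n)$. The properness step that you yourself flag as the main obstacle is deferred to ``the classical fusion procedure,'' and that is precisely the same identity, i.e.\ the one citation the paper's proof rests on. So your argument is correct only modulo the same external input; moreover, once that input is granted, your remaining machinery is redundant: knowing the rank is $n+1$ and that the image contains the $(n+1)$-dimensional $\gll$-submodule generated by $\xi'$ already forces the image to equal it, and your highest-weight computation then identifies it as the twist of $L(a_n,a_n-n)$, with no need for the induction or the socle analysis.

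Conversely, if you intended the induction to \emph{avoid} invoking the full fusion identity (using only the elementary fact $\sigma^R_{12}(-1)=1+P_{12}$), there is a genuine gap. Properness of the image inside $L(a_n,a_n-(n-1))\otimes L(a_1,a_1-1)$ does not follow from the factorization $\sigma^R(a_1,\ldots,a_n)=[\sigma^R(a_2,\ldots,a_n)\otimes\mathrm{id}]\circ F_1$: since $F_1$ contains the singular factor $\sigma^R_{12}(-1)$ and its other factors are invertible, the image of $F_1$ has dimension $3\cdot 2^{n-2}$, and comparing this with the kernel of $\sigma^R(a_2,\ldots,a_n)\otimes\mathrm{id}$ yields bounds that do not exclude surjectivity onto the $2n$-dimensional module (already for $n=3$ one only gets a dimension between $4$ and $6$, while the truth is $4$). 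In addition, your claim that the unique proper nonzero submodule of $L(a_n,a_n-(n-1))\otimes L(a_1,a_1-1)$ is $L(a_n,a_n-n)$ is true but is not covered by the two-factor statement recalled in the paper, which concerns only products of two $2$-dimensional evaluation modules; it requires the indecomposability and length-two structure of such mixed tensor products as a further input.
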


\begin{proof}
First note that
$$\sigma^R(a_1,\ldots,a_n)=\prod_{1\leqslant l\leqslant n-1}^{\leftarrow} \prod_{1\leqslant k\leqslant n-l}^{\leftarrow} \sigma^R_{k,k+1}(a_{l}-a_{l+k})=$$
$$=\prod_{1\leqslant l\leqslant n-1}^{\leftarrow}\prod_{1\leqslant k\leqslant n-l}^{\leftarrow} Flip_{k,k+1}\circ \prod_{1\leqslant l\leqslant n-1}^{\leftarrow}\prod_{1\leqslant k\leqslant n-l}^{\leftarrow} R_{l,l+k}(a_{l}-a_{l+k}).$$

By Proposition 1.6.3 in \cite{M} the map $\sigma^R(a_1,a_1+1,\ldots,a_1+n-1)$ is the symmetrization map of $(\mathbb C^2)^{\otimes n}$ composed with changing the order of factors. Hence its image is the symmetric $n$'th power of the $2$-dimensional space, therefore the dimension of the image of $\sigma^R(a_1,a_1+1,\ldots,a_1+n-1)$ is equal to $\dim L(a_1+n-1,a_1-1)$.

The weight of the highest weight vector in the tensor product corresponds to the same Drinfeld polynomial as the weight of the highest weight vector in $L(a_1+n-1,a_1-1)$. They differ by a twist determined by the automorphism (\ref{multaut}). The highest weight vector is symmetric, hence it lies in the image, so the twist of $L(a_1+n-1,a_1-1)$ by an automorphism (\ref{multaut}) is a subrepresentation of $L(a_n,a_n-1)\otimes\ldots\otimes L(a_1,a_1-1)$ and is isomorphic to the image of the map $\sigma^R(a_1,a_1+1,\ldots,a_1+n-1)$.
\end{proof}

In this text we will consider $\yt$ modules up to twisting by automorphisms of multiplication by a formal series as in (\ref{multaut}), since these automorphisms preserve the subalgebras $B(C)$ for any $C$ and the subalgebras appearing in the limit of the family of Bethe subalgebras.

Throughout the text we will be using a filtration on $\yt$. The filtration $F^{\bullet}\yt$ is defined by $\deg t_{ij}^{(r)}=r$. From the defining relations it is clear that this can be extended to a filtration on $\yt$.

Also we will be using a grading on $\yt$. This grading is given by the adjoint action of $h=t_{11}^{(1)}-t_{22}^{(1)}$, the Cartan element in $\gll$ embedded in $\yt$ as above. This also gives us a grading on the $\yt$ representations where the action of $h$ is locally finite.

The evaluation homomorphism is the identity on the embedded $\gll$. Therefore this grading agrees with the corresponding grading defined by the action of $E_{11}-E_{22}$ on the representations of $\gll$, and the product grading on tensor products of representations is the grading obtained from the diagonal action of $\gll$.

\section{Shapovalov dererminant}
\label{shap}
Here we follow the idea introduced in \cite{Sh}. The results of this section seem to be well-known, but we could not find it in the literature.

Let $A$ be a $\mathbb Z_{\geqslant 0}$-graded algebra without zero-divisors. Let $U$ be a $\mathbb Z_{\geqslant 0}$-graded $A$-module such that the Poincare series of $A$ is equal to the Poincare series of $U$. Pick a basis $a_{mi}$ in $A_m$, the $m$'th graded component of $A$, and a basis $u_{mi}$ of $U_m$ correspondingly. Since Poincare series of $A$ and $U$ are equal, the bases have the same cardinality.

Since $A$ has no zero-divizors, $\dim A_0=1=\dim U_0$. Let $u_0$ be a non-zero element of $U_0$.

Suppose that the action of $A$ on $U$ depends polynomially on the parameters $x_1,\ldots,x_n$. Then we can consider the action of $A\otimes\mathbb C[x_1,\ldots,x_n]$ on $U\otimes\mathbb C[x_1,\ldots,x_n]$. Then we can define the Shapovalov matrix $\mathcal D_m$ that expresses $a_{mi}\cdot u_0$ in terms of $u_{mi}$ with coefficients in $\mathbb C[x_1,\ldots,x_n]$. Let $D_m=\operatorname{det} \mathcal D_m$ be the Shapovalov determinant.

Suppose that for generic values of parameters $U$ is generated by the element $u_0\in U_0$.

\begin{lemma}
\label{shapdetgen}
Suppose $D_m$ is divisible by some linear factor $f\in\mathbb C[x_1,\ldots,x_n]$. Then $D_{m+k}$ is divisible by $f^{\dim A_k}$.
\end{lemma}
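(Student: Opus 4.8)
The plan is to translate the divisibility statement into a statement about the rank of the Shapovalov matrix dropping along the hypersurface $\{f=0\}$, and then to propagate that rank drop from degree $m$ to degree $m+k$ using the module structure. Since $f$ is linear it is irreducible, so the ideal $(f)\subset S:=\mathbb{C}[x_1,\ldots,x_n]$ is a height-one prime and the localization $S_{(f)}$ is a discrete valuation ring with uniformizer $f$ and residue field $\kappa=\mathbb{C}(y_1,\ldots,y_{n-1})$. Over this DVR I would put $\mathcal{D}_{m+k}$ into Smith normal form. Here $\det\mathcal{D}_{m+k}=D_{m+k}$ is not identically zero: the hypothesis that $u_0$ generates $U$ for generic parameters forces each map $\Phi_l\colon A_l\to U_l$, $a\mapsto a\cdot u_0$, to be an isomorphism generically, so $D_l\not\equiv 0$ for all $l$. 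Thus the valuation $v_f(D_{m+k})$ is finite and equals the sum of the elementary-divisor exponents $\sum_i e_i$. The elementary point is that this sum is at least the number of indices $i$ with $e_i\geq 1$, and that number is exactly the corank of the reduction $\overline{\mathcal D}_{m+k}$ modulo $f$. Hence it suffices to show that the kernel of the reduced map $\overline\Phi_{m+k}\colon A_{m+k}\otimes\kappa\to U_{m+k}\otimes\kappa$ has dimension at least $\dim A_k$.

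To exhibit this kernel I would first unwind the hypothesis $f\mid D_m$: it says $\overline\Phi_m$ is singular over $\kappa$, so there is a nonzero vector $\bar a_0\in A_m\otimes\kappa$ with $\bar a_0\cdot u_0=0$ in $U_m\otimes\kappa$. Associativity of the action then carries out the propagation for free: for every $c\in A_k\otimes\kappa$ one has $(c\,\bar a_0)\cdot u_0=c\cdot(\bar a_0\cdot u_0)=0$, so the entire subspace $(A_k\otimes\kappa)\,\bar a_0$ lies in $\ker\overline\Phi_{m+k}$. It then remains to check that this subspace has dimension $\dim A_k$, i.e.\ that multiplication by $\bar a_0$ is injective on $A_k\otimes\kappa$.

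This last injectivity is the step I expect to be the main obstacle, and it is where the linearity of $f$ is genuinely used. Injectivity of multiplication by the nonzero element $\bar a_0$ is precisely the assertion that $A\otimes_{\mathbb{C}}\kappa$ has no zero-divisors. Because $f$ is linear, $S/(f)$ is a polynomial ring and $\kappa=\mathbb{C}(y_1,\ldots,y_{n-1})$ is a purely transcendental extension of $\mathbb{C}$; consequently $A\otimes_{\mathbb{C}}\kappa$ is a localization of the polynomial ring $A[y_1,\ldots,y_{n-1}]$ over the domain $A$, hence itself a domain. (Equivalently, one may invoke that integrality is preserved under base change to a purely transcendental extension of the algebraically closed field $\mathbb{C}$.) Granting this, $c\mapsto c\,\bar a_0$ is injective, so $\dim_\kappa\bigl((A_k\otimes\kappa)\,\bar a_0\bigr)=\dim A_k$ and therefore $\operatorname{corank}\overline{\mathcal D}_{m+k}\geq\dim A_k$. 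Combined with the Smith-normal-form bound of the first paragraph this yields $v_f(D_{m+k})\geq\dim A_k$, that is, $f^{\dim A_k}\mid D_{m+k}$, as claimed.
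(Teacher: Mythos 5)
Your proof is correct and follows essentially the same route as the paper's: localize at the height-one prime $(f)$ to get a DVR, use the Smith normal form over it, extract from $f \mid D_m$ a degree-$m$ element that kills $u_0$ modulo $f$, and multiply it by $A_k$ to produce a $\dim A_k$-dimensional kernel in degree $m+k$, which forces $f^{\dim A_k}\mid D_{m+k}$. If anything, your justification of the key injectivity step --- multiplication by the kernel vector is injective on $A_k\otimes\kappa$ because linearity of $f$ makes $\kappa$ purely transcendental over $\mathbb{C}$, so $A\otimes_{\mathbb{C}}\kappa$ is still a domain --- is more explicit than the paper's, which asserts the corresponding linear independence modulo $f$ directly from $A$ having no zero divisors.
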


\begin{proof}
Denote $K=\mathbb C[x_1,\ldots,x_n]_{(f)}$, the localization of $\mathbb C[x_1,\ldots,x_n]$ at the prime ideal generated by $f$. Then consider $\mathcal D_m$ as a matrix with coefficients in $K$.

Consider $U_m\otimes K$ as a free $K$-module. And consider its free $K$-submodule generated by $\mathcal D_m$
$$\mathcal D_m (U_m\otimes K)=(A_m\otimes K)\cdot u_0.$$

Denote $d_m=\dim_{\mathbb C} A_m=\dim_{\mathbb C}U_m$.

Since $K$ is a principal ideal domain, we can pick generators $w_{1},\ldots,w_{d_m}$ of $U_m\otimes K$ as a $K$-module such that there are $g_1,\ldots,g_{d_m}\in K$ and $g_1 w_{1},\ldots,g_{d_m} w_{d_m}$ generate $\mathcal D_m (U_m\otimes K)$ as a $K$-module such that $\mathcal D_m(w_i)=g_iw_i$. If we consider $\mathcal D_m$ as an operator on $U_m$, then its matrix in these set of generators is diagonal. We can pick unique elements $b_{mi}$ of $A_m\otimes K$ such that $b_{mi}\cdot u_0=g_i w_{i}$.

It follows that $\mathcal D_m$ is equal to a product of two matrices of base change ($\{u_{mi}\}$ to $\{w_{i}\}$ and $\{g_i w_{i}\}$ to $\{u_{mi}\}$) and the diagonal matrix $diag(g_1,\ldots,g_{d_m})$ in the appropriate order.

The matrices of base change are invertible, so their determinants are not divisible by $f$, hence the degree of $f$ that divides the determinant of the diagonal matrix $g_1\cdots g_{d_m}$ is equal to the degree of $f$ that divides $D_m$. Hence one of $g_i$'s is divisible by $f$, suppose that $g_1$ is divisible by $f$.

Now we want to show that $D_{m+k}$ is divisible by $f^{d_k}$.

Let $b_{ki}$, $1\leqslant i\leqslant d_{k}$, be a basis of $A_{k}$ over $\mathbb C$. Then we can complete $b_{ki}b_{m1}$, $1\leqslant i\leqslant d_{k}$, to a set of generators of $A_{m+k}\otimes K$ as a $K$-module as follows. The elements $b_{ki}b_{m1}$ are independent since $A$ has no zero divisors, therefore their images in $A_{m+k}\otimes K/(f)$ are linearly independent over $K/(f)$ and we can complete them to a basis of $A_{m+k}\otimes K/(f)$ over $K/(f)$. Since $K$ is a local ring, the liftings of this completed basis generate $A_{m+k}\otimes K$ as well.

Suppose that $u_{m+k,i}$, $1\leqslant i\leqslant d_{m+k}$ is a set of generators of $U_{m+k}$. Then they are also generators of $U_{m+k}\otimes K$ as a $K$-module.

Then $b_{ki}b_{m1}\cdot u_0=b_{ki}\cdot g_1w_{m1}=g_1b_{ki}\cdot w_{m1}$. Since $A$ acts on $U$ polynomially in terms of $x_i$'s, the expression of $b_{ki}b_{m1}\cdot u_0$ in terms of generators $u_{m+k,j}$ of $U_{m+k}\otimes K$ will be divisible by $f$ for any $i$.

Therefore if we write the matrix $\mathcal D_{m+k}$ in these sets of generators, its first $d_k$ columns will be divisible by $f$. Hence $D_{m+k}$ is divisible by $f^{d_k}$.
\end{proof}

\section{Cyclic vector for Bethe subalgebra of the principal nilpotent in the product of $\gll$ Verma modules}

Throughout this section let $B$ be the Bethe subalgebra of $\yt$ corresponding to $C=e_{12}$. Then $B$ is generated by the coefficients of the series $\operatorname{qdet}T(u)=t_{11}(u)t_{22}(u-1)-t_{21}(u)t_{12}(u-1)$ and $t_{21}(u)$.

We want to show that the highest weight vector of the finite-dimentional representations of $\yt$ is cyclic for $B$. For this we will show that it is cyclic in the product of Verma modules which surjects on the finite-dimensional module.

Let $\underline a=(a_1,\ldots,a_n)$ and $\underline b = (b_1,\ldots,b_n)$. Let $\Dab=M(a_1,b_1)\otimes\ldots\otimes M(a_n,b_n)$ where $M(a,b)$ is the Verma module for $\gll$ with highest weight $(a,b)$ and $\yt$ acts on it via the evaluation morphism.

As we discussed above, the action of $h=t_{11}^{(1)}-t_{22}^{(1)}$ produces a grading on $\Dab$, we shift it so that $\deg v_{a_1,b_1}\otimes\ldots\otimes v_{a_n,b_n}=0$. It is the standard grading on $\Dab$ as a $\gll$ module by the adjoint action of $h$. Let $M_m$ be the component of grading $-2m$ in $\Dab$. (Alternatively, we can consider the grading associatied with the adjoint action of $-h/2$.) Let $v_{a,b}\in M(a,b)$ be the highest weight vector. Let $v_{a,b}^m=(t_{21}^{(1)})^m v_{a,b}$. The degree of $v_{a,b}^{m}$ is equal to $-2m$. We can pick a basis of $M_m$ that consists of all vectors $v_{a_1,b_1}^{m_1}\otimes\ldots\otimes v_{a_n,b_n}^{m_n}$ such that $\sum_{i=1}^n m_i=m$.

We want to show that $v_{a_1,b_1}\otimes\ldots\otimes v_{a_n,b_n}$ is cyclic for the action of $B$ on $\Dab$. In order to do so we will investigate the subspace $V\subset\Dab$ generated from the vector $v_{a_1,b_1}\otimes\ldots\otimes v_{a_n,b_n}$ by the algebra $B$. Since coefficients of $\operatorname{qdet}T(u)$ lie in the center of $\yt$, it suffices to consider the action of the subalgebra of $B$ generated by the coefficients of the series $t_{21}(u)$, so further in this section we will write $B$ for this subalgebra.

To understand what subspace is generated from the highest weight vector, we will use the Shapovalov determinant discussed in section \ref{shap}.

We want to study the subspace $V_m = V \cap M_m$. We know that the only coefficients of $t_{21}(u)$ whose action is non-zero in $\Dab$ are $t_{21}^{(i)}$ for $1\leqslant i\leqslant n$, and that they commute. Therefore $B$ acts on $\Dab$ as a polynomial ring $\mathbb C[t_{21}^{(1)},\ldots,t_{21}^{(n)}]$ and we can think of $B$ as this algebra further on. So $V_m$ is generated by $(t_{21}^{(1)})^{k_1}\ldots (t_{21}^{(n)})^{k_m}(v_{a_1,b_1}\otimes\ldots\otimes v_{a_n,b_n})$ such that $\sum_{i=1}^n k_i=m$. The number of vectors is equal to $\dim M_m$.

Therefore we can construct a matrix that expresses $(t_{21}^{(1)})^{k_1}\ldots (t_{21}^{(n)})^{k_n}(v_{a_1,b_1}\otimes\ldots\otimes v_{a_n,b_n})$ in terms of the basis of $M_m$ we introduced earlier. Its determinant $D_m$ is zero if and only if $V_m$ is a proper subspace of $M_m$.

We will use the generic Verma module $\tilde M(x,y)=U(\gll)\otimes_{U(\mathbb C E_{12}+\mathbb C E_{11}+\mathbb C E_{22})}\mathbb C[x,y]$ where $U(\mathbb C E_{12}+\mathbb C E_{11}+\mathbb C E_{22})$ acts on $\mathbb C[x,y]$ in the following way:
$$E_{12}\cdot 1 = 0$$
$$E_{11}\cdot 1 = x$$
$$E_{22}\cdot 1 = y$$

The usual Verma module $M(a,b)$ is obtained from the generic Verma module by plugging in $x=a$ and $y=b$.

Note that numbers $a$ and $b$ are interchangeable with variables $x$, $y$ in terms of action of $\gll$ so we will abuse the notations and use $a$, $b$ both as variables and numbers.

To study when $D_m$ is equal to zero, we will consider $\tilde M(a_1,b_1)\otimes\ldots\otimes \tilde M(a_n,b_n)$ regarded as a $\mathbb C[a_1,b_1,\ldots,a_n,b_n]$-module with the action of $\yt$ defined on each factor by the evaluation morphism.

Let $\mathcal D_m$ be the matrix expressing the monomial basis consisting of $(t_{21}^{(1)})^{k_1}\ldots (t_{21}^{(n)})^{k_n}(v_{a_1,b_1}\otimes\ldots\otimes v_{a_n,b_n})$ in terms of the basis $v_{a_1,b_1}^{k'_1}\otimes\ldots\otimes v_{a_n,b_n}^{k'_n}$ of $\tilde M(a_1,b_1)\otimes\ldots\otimes \tilde M(a_n,b_n)$. We want to calculate its determinant $D_m$ which is a polynomial in $a_i$'s and $b_i$'s.

\begin{proposition}
\label{shapdetyang}
The determinant $D_m$ is equal to $\prod_{l=0}^{m-1}\prod_{1\leqslant j<i\leqslant n}(a_i-b_j-l)^{\genfrac{(}{)}{0pt}{2}{m+n-l-2}{n-1}}$ up to a constant factor and its degree is equal to
$\operatorname{deg}D_m={{n}\choose{2}} {{n+m-1}\choose{n}}$.
\end{proposition}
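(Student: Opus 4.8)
The plan is to pin down $D_m$ as a Shapovalov-type determinant by squeezing it between a divisibility statement (a lower bound on the power of each linear factor) and a matching upper bound on $\deg D_m$. Here $B$ acts as the polynomial ring $\mathbb C[t_{21}^{(1)},\ldots,t_{21}^{(n)}]$ with each $t_{21}^{(r)}$ of degree $1$, so $A_k:=B_k$ has $\dim A_k=\binom{k+n-1}{n-1}=\dim M_k$ and the Poincar\'e series of $B$ and of $\Dab$ coincide; thus the hypotheses of Lemma \ref{shapdetgen} are in force, once we check that the highest weight vector generates $\Dab$ for generic parameters (equivalently $D_m\not\equiv 0$).

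First I would record the action explicitly by writing the monodromy over the tensor product as the ordered product $T(u)=T^{(1)}(u)\cdots T^{(n)}(u)$ with $T^{(s)}(u)=1+u^{-1}E^{(s)}$, so that $t_{21}(u)=\bigl(T^{(1)}(u)\cdots T^{(n)}(u)\bigr)_{21}$ and $t_{21}^{(r)}$ is the sum, over lattice paths from row $2$ to column $1$ using $r$ of the matrix units $E^{(s)}$, of the corresponding products. On a weight vector each diagonal or raising ($E_{12}$) unit contributes a coefficient of degree $\leqslant 1$ in the $a_s,b_s$, while the mandatory excess lowering step $E_{21}$ contributes degree $0$; hence applying $t_{21}^{(r)}$ raises the degree of matrix coefficients by at most $r-1$. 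Consequently the row of $\mathcal D_m$ indexed by a composition $(k_1,\ldots,k_n)$ of $m$ has all entries of degree $\leqslant\sum_r k_r(r-1)$, and expanding $\det$ over bijections of the index set gives
$$\deg D_m\leqslant\sum_{k_1+\cdots+k_n=m}\Bigl(\sum_r r k_r-m\Bigr)=\frac{(n-1)m}{2}\binom{m+n-1}{n-1}=\binom{n}{2}\binom{n+m-1}{n},$$
where the middle equality is a short symmetry computation (by symmetry $\sum_K k_r=\tfrac{m}{n}\binom{m+n-1}{n-1}$) and the last follows from the hockey-stick identity; this already matches the asserted degree.

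Next I would produce the factors. The factor $a_i-b_j-l$ ($j<i$) can first occur in $D_{l+1}$, where the claimed exponent is $\binom{n-1}{n-1}=1$. The base-case claim is that on the hyperplane $a_i-b_j=m_0-1$, with $m_0=l+1$, the vectors $(t_{21}^{(1)})^{k_1}\cdots(t_{21}^{(n)})^{k_n}v$ fail to span $M_{m_0}$, so that $a_i-b_j-l$ divides $D_{l+1}$. Granting this, Lemma \ref{shapdetgen} applied with $f=a_i-b_j-l$ (dividing $D_{l+1}$) and $k=m-l-1$ shows $D_m$ is divisible by $f^{\dim A_k}=f^{\binom{m+n-l-2}{n-1}}$, exactly the claimed exponent. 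Ranging over $0\leqslant l\leqslant m-1$ and all pairs $j<i$ yields pairwise coprime linear factors, so their product $\Pi$ (the asserted formula) divides $D_m$. Since $\deg\Pi$ equals the upper bound for $\deg D_m$ above, and $D_m\not\equiv 0$, we conclude $D_m=c\,\Pi$ for a nonzero constant $c$, giving both the factorization and the degree formula. The generic non-vanishing $D_m\not\equiv0$ I would obtain by computing the top-degree part of $\det\mathcal D_m$ from the leading symbols of the $t_{21}^{(r)}$ (the ``no raising'' terms), which reduces to a classical nondegenerate determinant.

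The main obstacle is the base-case vanishing. The mechanism is transparent for $n=2$, $(j,i)=(1,2)$: writing $v_s^k$ for $v_{a_s,b_s}^k$, a direct computation gives $t_{21}(u)(v_1^{p}\otimes v_2^{q})=u^{-1}(1+u^{-1}(a_2-q))\,v_1^{p+1}\otimes v_2^{q}+u^{-1}(1+u^{-1}(b_1+p))\,v_1^{p}\otimes v_2^{q+1}$, and on $a_2-b_1=m_0-1$ the two scalars coincide for every state with $p+q=m_0-1$, so at this penultimate level every coefficient of $t_{21}(u)$ factors through the single operator $t_{21}^{(1)}=E_{21}\otimes1+1\otimes E_{21}$. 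As $\dim t_{21}^{(1)}(M_{m_0-1})\leqslant m_0<m_0+1=\dim M_{m_0}$, the whole cyclic space $B\cdot v$ in degree $m_0$ lands in the proper subspace $t_{21}^{(1)}(M_{m_0-1})$, forcing $D_{m_0}=0$. For general $n$ this collapse is more delicate, because $B\cdot v$ in degree $m_0$ is no longer the image of one operator; the difficulty is to exhibit, on the resonance hyperplane, an explicit proper $B$-invariant subspace containing $v$ (equivalently a vector in the left kernel of $\mathcal D_{m_0}$). I expect to reach it by isolating the two factors $j<i$ through the ordered-product form of $t_{21}(u)$ and reproducing the rank drop between them, or by locating a singular covector in the contragredient module for the opposite Bethe algebra. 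This representation-theoretic step is where the real work lies; the degree count and the bootstrap via Lemma \ref{shapdetgen} are then routine.
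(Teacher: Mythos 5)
Your overall architecture is the same as the paper's: squeeze $D_m$ between the divisibility lower bound produced by Lemma \ref{shapdetgen} and the degree upper bound, then observe the two coincide. Within that architecture your degree count is correct (your symmetry plus hockey-stick evaluation of $\sum_{K}\sum_r(r-1)k_r$ is in fact shorter than the paper's inductive lemma proving the same identity), your $n=2$ resonance computation is correct and is the dual form of the paper's Lemma \ref{singularvect} (the paper finds a singular vector in the contragredient dual; you exhibit the rank drop directly), and your plan for generic nonvanishing via the leading, no-raising symbols of the $t_{21}^{(r)}$ is exactly what the paper's Lemma \ref{freeaction} carries out, terminating in the determinant $\det\bigl(e_{j-1}(a_1,\ldots,\hat a_i,\ldots,a_n)\bigr)=\pm\prod_{i<j}(a_i-a_j)$.

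The genuine gap is the step you flag yourself and do not close: that $a_i-b_j-l$ divides $D_{l+1}$ for \emph{every} pair $j<i$ inside an $n$-fold product. This is not a routine extension of your $n=2$ computation. When the resonant factors $j$ and $i$ are not adjacent, the collapse you found fails on the nose: in the $n$-fold coproduct the coupling between slots $j$ and $i$ passes through the diagonal entries of the intermediate factors, whose eigenvalues $a_s-k_s$, $b_s+k_s$ destroy the coincidence of scalars. Dually, a two-factor singular covector extends to the $n$-factor contragredient dual by tensoring with highest weight vectors only when the resonant slots are adjacent (highest weight vectors can be placed to the left or to the right of a singular vector, but not interleaved, since cross terms of the form $t_{12}\otimes t_{21}\otimes t_{12}$ survive on a highest weight vector in the middle slot). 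The paper resolves precisely this point in Lemma \ref{detfactors} by invoking Lemma 6.1 of \cite{KT}: for generic highest weights with integer differences there exist nonzero $\yt$-morphisms $M(c_1,d_1)\otimes M(c_2,d_2)\to M(c_2,d_2)\otimes M(c_1,d_1)$ taking highest weight vector to highest weight vector; composing them yields $\phi\colon M(\underline a',\underline b')\to\Dab$ with the resonant pair moved into positions $1,2$ and $\phi(v')=v$, whence $\dim B_{k+1}v=\dim\phi(B_{k+1}v')\leqslant\dim B_{k+1}v'<\dim M_{k+1}$, the last inequality being the two-factor statement. Without this exchange intertwiner (or some replacement for it), your argument establishes the proposition only for $n=2$; and since the degree squeeze needs the factors from \emph{all} pairs $j<i$ to reach the upper bound, the proof cannot close for $n\geqslant 3$. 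The missing idea to import is exactly the Khoroshkin--Tolstoy morphism between differently ordered tensor products of Verma modules.
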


Denote $v = v_{a_1,b_1}^{k_1}\otimes\ldots\otimes v_{a_n,b_n}^{k_n}$. Consider
$$t_{21}(u)(v)=\sum_{(i_1,\ldots,i_{n-1})\in\{1,2\}^{n-1}}t_{2i_1}(u)v_{a_1,b_1}^{k_1}\otimes t_{i_1 i_2}(u)v_{a_2,b_2}^{k_2}\otimes\ldots\otimes t_{i_{n-1}1}(u)v_{a_n,b_n}^{k_n}.$$
The coefficient of $u^{-k}$ is equal to $t_{21}^{(k)}(v)$.
Recall that
$$t_{21}(u)(v_{a,b}^{k})=u^{-1}v_{a,b}^{k+1}$$
$$t_{12}(u)(v_{a,b}^{k})=u^{-1}k(a-b-k+1)v_{a,b}^{k-1}$$
$$t_{11}(u)(v_{a,b}^{k})=(1+(a-k) u^{-1})v_{a,b}^{k}$$
$$t_{22}(u)(v_{a,b}^{k})=(1+(b+k)u^{-1})v_{a,b}^{k-1}$$
Hence the degree in $a_i$'s and $b_i$'s of the coefficient in front of $u^{-k}$ is at most $k-1$ since $u^{-1}$ appears with the coefficient of degree $0$ or $1$ in $a_i$'s and $b_i$'s and at least once the degree is zero for $t_{21}(u)$.
Therefore the degrees of all elements of $\tilde M_m$ (the $m$'th graded component of $\tilde M(a,b)$) in the column corresponding to $(t_{21}^{(1)})^{k_1}\ldots (t_{21}^{(n)})^{k_n}(v)$ is at most $\sum_{i=1}^{n}(i-1)k_i$, and hence the degree of the determinant $D_m$ for $\tilde M_m$ is at most the sum of degrees of columns, i.e.

\begin{equation}
    \label{ubound}
    \operatorname{deg} D_m\leqslant\sum_{\substack{(k_1,\ldots,k_n)\\ \sum k_i=m}}(\sum_{i=1}^{n}(i-1)k_i)
\end{equation}

Now we want to estimate the degree of the determinant from below. For this we want to find some vectors that are not generated from $v_{a_1,b_1}\otimes v_{a_2,b_2}$ in $M(a_1,b_1)\otimes M(a_2,b_2)$. By the contragredient duality it is the same as finding singular vectors in the contragredient dual module $M^{\vee}(a_2,b_2)\otimes M^{\vee}(a_1,b_1)$.

\begin{lemma}
\label{singularvect}
For the action of $t_{12}(u)$ on $M^{\vee}(a_2,b_2)\otimes M^{\vee}(a_1,b_1)$ there is one singular vector in degree $m\geqslant 1$ if $a_2-b_1=m-1$ and no singular vectors otherwise.
\end{lemma}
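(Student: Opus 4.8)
The plan is to compute directly. A singular vector for $t_{12}(u)$ is a vector annihilated by every coefficient $t_{12}^{(k)}$, and on a tensor product of two evaluation modules only $t_{12}^{(1)}$ and $t_{12}^{(2)}$ act nontrivially, so the problem reduces to a finite linear system. First I would record the action on a single contragredient module: dualizing the formulas for $M(a,b)$ by the transpose anti-automorphism $t_{ij}(u)\mapsto t_{ji}(u)$ that defines $M^{\vee}(a,b)$, and writing $w^{k}$ for the basis dual to $v_{a,b}^{k}$, one obtains $t_{11}(u)w^{k}=(1+(a-k)u^{-1})w^{k}$, $t_{22}(u)w^{k}=(1+(b+k)u^{-1})w^{k}$, and $t_{12}(u)w^{k}=u^{-1}w^{k-1}$ (with $t_{12}(u)w^{0}=0$), so that $w^{0}$ is the highest weight vector.

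Next I would apply $\Delta t_{12}(u)=t_{11}(u)\otimes t_{12}(u)+t_{12}(u)\otimes t_{22}(u)$ to a general degree-$m$ vector $w=\sum_{p+q=m}c_{pq}\,w^{p}\otimes w^{q}$, taking the first tensor factor to be $M^{\vee}(a_2,b_2)$ and the second $M^{\vee}(a_1,b_1)$. Each summand maps to $u^{-1}(1+(a_2-p)u^{-1})\,w^{p}\otimes w^{q-1}+u^{-1}(1+(b_1+q)u^{-1})\,w^{p-1}\otimes w^{q}$. Collecting the coefficient of a target basis vector $w^{p'}\otimes w^{q'}$ with $p'+q'=m-1$ and setting the $u^{-1}$ and $u^{-2}$ parts separately to zero yields the two relations $c_{p',q'+1}+c_{p'+1,q'}=0$ and $(a_2-p')c_{p',q'+1}+(b_1+q')c_{p'+1,q'}=0$. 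Eliminating using the first relation collapses the second to $(a_2-b_1-(m-1))\,c_{p',q'+1}=0$ for every $p'+q'=m-1$, since $p'+q'=m-1$.

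Finally I would read off the two cases. If $a_2-b_1\neq m-1$, this scalar relation forces $c_{p',q'+1}=0$ for all admissible $(p',q')$, and together with the sign relation $c_{p'+1,q'}=-c_{p',q'+1}$ this kills every coefficient, so there is no singular vector. If $a_2-b_1=m-1$, the scalar relation is vacuous and only the chain of sign relations survives, linking the $m+1$ coefficients along the anti-diagonal $p+q=m$ into a single chain $c_{p+1,m-1-p}=-c_{p,m-p}$; this leaves a one-dimensional solution space, i.e. exactly one singular vector.

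The linear algebra here is routine, so the main point requiring care is fixing the duality conventions correctly: I must check that the contragredient tensor product is the module above and is governed by $\Delta$ rather than $\Delta^{opp}$, so that the two surviving parameters are exactly $a_2$ (coming from $t_{11}$ on the first factor) and $b_1$ (coming from $t_{22}$ on the second factor) — the opposite comultiplication would instead produce the condition $a_1-b_2=m-1$. The other verification to keep honest is that no coefficients beyond $u^{-2}$ can appear, which is what guarantees the system is precisely the finite pair of equations above and not an infinite family.
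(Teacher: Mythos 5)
Your proof is correct and is essentially the paper's own argument, just organized as one linear system instead of two steps: your $u^{-1}$ relations $c_{p',q'+1}+c_{p'+1,q'}=0$ are exactly the paper's reduction to $e_{12}$-singular vectors (the alternating sums $\sum_i(-1)^i w_{a_2,b_2}^{i}\otimes w_{a_1,b_1}^{m-i}$), and your $u^{-2}$ relations reproduce the paper's final computation yielding the condition $a_2-b_1=m-1$. Your explicit care with the duality/comultiplication conventions and with the truncation of $t_{12}(u)$ at $u^{-2}$ is sound and consistent with the paper's setup.
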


\begin{proof}
Let $w_{a_i,b_i}^{k}$ be the basis vector of the $k$'th degree in $M^{\vee}(a_i,b_i)$ (the grading again comes from the adjoint action of $h$ such that the highest weight vector has degree $0$) such that the action of $t_{12}^{(1)}$ takes $w_{a_i,b_i}^{k}$ to $w_{a_i,b_i}^{k-1}$.

If a vector is singular for $t_{12}(u)$, then it is singular for the diagonal action of $e_{12}\in \sll\subset\gll\subset\yt$. By a direct computation it can be verified that the only such vectors are the scalar multiples of $\sum_{i=0}^{m}(-1)^i w_{a_2,b_2}^{i}\otimes w_{a_1,b_1}^{m-i}$. Let us calculate the action of $t_{12}(u)$ on such vector:
\begin{flalign*}
&t_{12}(u)(\sum_{i=0}^{m}(-1)^i w_{a_2,b_2}^{i}\otimes w_{a_1,b_1}^{m-i})=\\
&\sum_{i=0}^{m}(-1)^i t_{11}(u)w_{a_2,b_2}^{i}\otimes t_{12}(u)w_{a_1,b_1}^{m-i}+\sum_{i=0}^{m}(-1)^i t_{12}(u)w_{a_2,b_2}^{i}\otimes t_{22}(u)w_{a_1,b_1}^{m-i}=\\
&\sum_{i=0}^{m}(-1)^i \Big(u^{-1}\big(1+(a_2-i)u^{-1}\big)w_{a_2,b_2}^{i}\otimes w_{a_1,b_1}^{m-i-1}+u^{-1}\big(1+(b_1+m-i)u^{-1}\big)w_{a_2,b_2}^{i-1}\otimes w_{a_1,b_1}^{m-i}\Big)
\end{flalign*}

This equals zero if and only if $a_2-i=b_1+m-i-1$ which is equivalent to $a_2-b_1=m-1$. 
\end{proof}

\begin{lemma}
\label{detfactors}
The determinant $D_{k+1}$ is divisible by $a_i-b_j-k$ for each $i>j$.
\end{lemma}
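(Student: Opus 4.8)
The plan is to show that the polynomial $D_{k+1}\in\mathbb C[a_1,b_1,\ldots,a_n,b_n]$ vanishes identically on the hyperplane $\{a_i-b_j=k\}$ for each fixed pair $i>j$. Since $a_i-b_j-k$ is an irreducible (degree one) polynomial, vanishing on its zero locus yields the asserted divisibility. By the discussion preceding Lemma \ref{singularvect}, $D_{k+1}$ vanishes precisely when the product of highest weight vectors fails to generate $M_{k+1}$ under $B$, and, by contragredient duality, this failure is equivalent to the existence of a nonzero vector annihilated by $t_{12}(u)$ in degree $k+1$ in the dual module $M^\vee(a_n,b_n)\otimes\cdots\otimes M^\vee(a_1,b_1)$. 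So it suffices to produce such a singular vector whenever $a_i-b_j=k$.

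First I would reduce to the two adjacent factors treated in Lemma \ref{singularvect}. Since $i>j$, the factor $M^\vee(a_i,b_i)$ precedes $M^\vee(a_j,b_j)$ in the dual product. Using the $R$-matrix intertwiners for Verma modules (constructed exactly as the maps of Proposition \ref{rephomom}, now between $M(a_p,b_p)\otimes M(a_q,b_q)$ and $M(a_q,b_q)\otimes M(a_p,b_p)$), I reorder the remaining tensor factors so that $M^\vee(a_i,b_i)$ and $M^\vee(a_j,b_j)$ become adjacent, leaving their relative order untouched. At the generic point of the hyperplane $\{a_i-b_j=k\}$ the intertwiners used (which involve only the other factors) are isomorphisms; moreover they intertwine the $\yt$-action, hence commute with $B$, and send the product of highest weight vectors to a nonzero multiple of itself. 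Consequently the properness of $V_{k+1}$ is the same in the original and the reordered module, and it is enough to exhibit the singular vector in the reordered one.

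Now I build the singular vector. On the adjacent pair $M^\vee(a_i,b_i)\otimes M^\vee(a_j,b_j)$, Lemma \ref{singularvect} applied with $m=k+1$ and the condition $a_i-b_j=m-1=k$ provides a nonzero $t_{12}(u)$-singular vector $\eta=\sum_{p=0}^{k+1}(-1)^p w_{a_i,b_i}^{p}\otimes w_{a_j,b_j}^{k+1-p}$. I then propagate singularity to the full product by tensoring with highest weight vectors on both sides, using the key elementary fact: if $\eta$ is annihilated by $t_{12}(u)$ and $w^0$ is a highest weight vector (so $t_{12}(u)w^0=0$ and $t_{11}(u)w^0$ is a scalar multiple of $w^0$), then from $\Delta\colon t_{12}(u)\mapsto t_{11}(u)\otimes t_{12}(u)+t_{12}(u)\otimes t_{22}(u)$ both $w^0\otimes\eta$ and $\eta\otimes w^0$ are again annihilated by $t_{12}(u)$. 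Iterating this from the adjacent pair outward yields a nonzero $t_{12}(u)$-singular vector of degree $k+1$ in $M^\vee(a_n,b_n)\otimes\cdots\otimes M^\vee(a_1,b_1)$, which is exactly what we needed.

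The main obstacle is the non-primitivity of the coproduct on $t_{12}(u)$: the iterated comultiplication produces terms in which $t_{21}(u)$ acts on intervening factors, and a direct computation shows that for highest weight vectors sitting \emph{between} the two special factors these $t_{21}(u)$-terms raise them to nonzero states that do not cancel. Thus one cannot simply place the two-factor singular vector of Lemma \ref{singularvect} into non-adjacent slots with highest weight vectors in between; the reduction to adjacency via intertwiners (rather than a naive insertion) is what makes the propagation step valid. The only point requiring care is to check that the reordering intertwiners remain invertible along the locus $\{a_i-b_j=k\}$, which holds because they involve only pairs of factors other than $(i,j)$ and hence are generically nondegenerate there.
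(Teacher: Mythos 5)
Your overall strategy is essentially the paper's own: specialize $D_{k+1}$ to (a Zariski-dense subset of) the hyperplane $\{a_i-b_j=k\}$, use intertwiners between tensor products of Verma modules to bring the $i$'th and $j$'th factors together, apply Lemma \ref{singularvect} with $m=k+1$ to the adjacent pair, and transfer the resulting failure of cyclicity back to the original ordering to conclude that $D_{k+1}$ vanishes there. Two details you spell out are correct and are left implicit in the paper: the propagation of a $t_{12}(u)$-singular vector past highest weight vectors on either side via $\Delta(t_{12}(u))=t_{11}(u)\otimes t_{12}(u)+t_{12}(u)\otimes t_{22}(u)$, and the explanation of why a naive insertion of the two-factor singular vector into non-adjacent slots fails.

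There is, however, one genuine gap: the existence of the reordering intertwiners. You assert they are ``constructed exactly as the maps of Proposition \ref{rephomom}'', but that proposition builds its maps from $\sigma^R_{ij}(u)=Flip_{ij}\circ R_{ij}(u)$ with $R_{ij}(u)=1-u^{-1}P_{ij}$ acting on $(\mathbb C^2)^{\otimes n}$; the permutation operator $P_{ij}$ has no literal analogue on infinite-dimensional Verma modules, so this construction does not carry over. The existence of nonzero morphisms $M(c_1,d_1)\otimes M(c_2,d_2)\to M(c_2,d_2)\otimes M(c_1,d_1)$ taking highest weight vector to highest weight vector is a nontrivial fact, which the paper imports from Lemma 6.1 of \cite{KT}; that result requires $c_1-d_1,c_2-d_2\in\mathbb Z_{\geqslant 0}$, which is why the paper restricts to points of the hyperplane with all $a_l-b_l\in\mathbb Z_{\geqslant 0}$ (still Zariski dense, so divisibility follows). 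Relatedly, you demand that the intertwiners be isomorphisms at generic points of the hyperplane; this is more than \cite{KT} provides and more than you need. It suffices, as in the paper, to have a single nonzero highest-weight-vector-preserving morphism $\Phi$ from the reordered module (where the deficiency is established) to the original one: since $\Phi$ commutes with $B$, one gets $B_{k+1}v=\Phi(B_{k+1}v')$ and hence $\dim B_{k+1}v\leqslant\dim B_{k+1}v'<\dim M_{k+1}$, with no invertibility required. With the citation of \cite{KT} replacing your parenthetical and the transfer run in this one-directional form, your argument is complete and coincides with the paper's proof.
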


\begin{proof}
Consider the module $\Dab$ as above such that $a_l-b_l$ is integer for each $1\leqslant l\leqslant n$, $a_i-b_j=k\in\mathbb Z_{\geqslant 0}$ and $a_l$'s are generic for $l\neq i,j$. We will show that for such module the highest weight vector is not cyclic and $M_{k+1}$ is not generated from the highest weight vector by the action of $B$.
Consider the module $M(\underline a',\underline b')$ such that $a'_l=a_{\sigma(l)}$ and $b'_l=b_{\sigma(l)}$ where $\sigma\in\mathfrak S_n$ is a permutation such that $\sigma(j)=1$, $\sigma(i)=2$.

From Lemma 6.1 in \cite{KT} it follows that for generic values of $c_1,d_1,c_2,d_2$ with $c_1-d_1,c_2-d_2\in\mathbb Z_{\geqslant 0}$, there is a non-zero homomorphism $M(c_1,d_1)\otimes M(c_2,d_2)\to M(c_2,d_2)\otimes M(c_1,d_1)$ mapping the highest weight vector to the highest weight vector. Hence we can construct a morphism $M(\underline a',\underline b')\to\Dab$ as a composition of such morphisms realising the permutation $\sigma$.

From lemma \ref{singularvect} it follows that in $M(\underline a',\underline b')$ the highest weight vector $v_{a'_1,b'_1}\otimes\ldots\otimes v_{a'_n,b'_n}$ is not cyclic and $\dim B_{k+1}v_{a'_1,b'_1}\otimes\ldots\otimes v_{a'_n,b'_n} < \dim M(\underline a',\underline b')_{k+1}$ (here $B_{k+1}$ is the $(k+1)$st graded component of $B$, $\deg t_{21}^{(l)}=1$) since $M(a'_1,b'_1)\otimes M(a'_2,b'_2)$ satisfies the conditions of the lemma. Therefore $\dim B_{k+1}v_{a'_1,b'_1}\otimes\ldots\otimes v_{a'_n,b'_n} < \dim M(\underline a',\underline b')_{k+1}$, and it follows that $\dim B_{k+1}v_{a_1,b_1}\otimes\ldots\otimes v_{a_n,b_n}<M_{k+1}$ and $M_{k+1}$ is not generated from the highest weight vector by the action of $B$.
Hence $D_{k+1}$ is divisible by $a_i-b_j-k$ for each $i>j$.
\end{proof}

\begin{lemma}
\label{freeaction}
For generic values of the parameters $\underline a$, $\underline b$ the highest weight vector $v_{a_1,b_1}\otimes\ldots\otimes v_{a_n,b_n}$ is cyclic for the action of $B$ on $\Dab$.
\end{lemma}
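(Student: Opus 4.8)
The plan is to reduce cyclicity to the non-vanishing of the determinants $D_m$ and then to prove $D_m\not\equiv 0$ by a degeneration argument. Since $\Dab$ is graded by the (shifted) action of $h$ and $B$ is generated by the homogeneous elements $t_{21}^{(1)},\dots,t_{21}^{(n)}$, the subspace $V=Bv$ generated by $v=v_{a_1,b_1}\otimes\dots\otimes v_{a_n,b_n}$ is graded, so $v$ is cyclic if and only if $V_m=M_m$ for every $m$, i.e.\ if and only if the $\dim M_m$ vectors $(t_{21}^{(1)})^{k_1}\cdots(t_{21}^{(n)})^{k_n}v$ span $M_m$. By construction of $\mathcal D_m$ this is equivalent to $D_m(\underline a,\underline b)\neq 0$. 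Hence it suffices to show that each $D_m$ is a non-zero polynomial: then every $\{D_m=0\}$ is a proper hypersurface and a generic $(\underline a,\underline b)$ avoids the countable union $\bigcup_m\{D_m=0\}$, yielding cyclicity in all degrees simultaneously. I deliberately avoid invoking Proposition \ref{shapdetyang} here, since its proof uses Lemma \ref{shapdetgen}, whose hypothesis is exactly the genericity we are establishing.

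To prove $D_m\not\equiv 0$ I would isolate the leading term of $\mathcal D_m$ under the rescaling $a_i=\alpha_i t$, $b_i=\beta_i t$ as $t\to\infty$. From the displayed formulas for $t_{11}(u),t_{22}(u),t_{12}(u),t_{21}(u)$ and the iterated coproduct one sees that in the coefficient of $u^{-r}$ the maximal parameter-degree $r-1$ of (\ref{ubound}) is attained only by the ``simple'' terms carrying a single $t_{21}$ and no $t_{12}$: any path with an extra $t_{12}t_{21}$ pair consumes two further factors $u^{-1}$ and strictly loses degree. Consequently the top-degree part of $t_{21}^{(r)}$ is, to leading order in $t$, the state-independent operator $t^{\,r-1}L_r$ with $L_r=\sum_{p=1}^n e_{r-1}(\beta_1,\dots,\beta_{p-1},\alpha_{p+1},\dots,\alpha_n)\,E_{21}^{(p)}$, the integer weight-shifts $m_q$ being negligible as $t\to\infty$.

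The operators $E_{21}^{(p)}$ commute and act freely on $v$ in the product of Verma modules, so $\mathbb C[E_{21}^{(1)},\dots,E_{21}^{(n)}]\,v$ is a polynomial ring with monomial basis $v_{a_1,b_1}^{m_1}\otimes\dots\otimes v_{a_n,b_n}^{m_n}$. In this model the leading part of the column of $\mathcal D_m$ labelled by $(k_1,\dots,k_n)$ is $L_1^{k_1}\cdots L_n^{k_n}v$, so the leading matrix is $\operatorname{Sym}^m$ of the linear map with matrix $\big[e_{r-1}(\beta_1,\dots,\beta_{p-1},\alpha_{p+1},\dots,\alpha_n)\big]_{r,p}$, and is invertible precisely when the $L_r$ are linearly independent. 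That matrix is exactly the degree-one Shapovalov matrix $\mathcal D_1$, whose columns are the coefficient vectors of $\prod_{q<p}(1+\beta_q x)\prod_{q>p}(1+\alpha_q x)$; specialising all $\beta_q=0$ turns these into polynomials of strictly descending degrees $n-1,n-2,\dots,0$, hence linearly independent, so $\det\mathcal D_1\not\equiv 0$. Therefore the leading matrix is non-singular for generic $(\alpha,\beta)$, the top coefficient of $D_m$ in $t$ is non-zero, and $D_m\not\equiv 0$.

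I expect the main obstacle to be the bookkeeping of the second paragraph: proving rigorously that the simple paths alone control the top parameter-degree and that composing the rescaled operators $t_{21}^{(r)}$ produces no cancellation at leading order, so that the leading part of $\mathcal D_m$ is honestly $\operatorname{Sym}^m\mathcal D_1$. Once this is in place the remaining ingredients---the reduction to $D_m\not\equiv 0$, the freeness of the $E_{21}^{(p)}$-action, and the base case $\det\mathcal D_1\not\equiv 0$---are routine.
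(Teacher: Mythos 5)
Your proposal is correct and is essentially the paper's own argument: both degenerate the parameters to infinity so that each $t_{21}^{(r)}$ acts, at leading order, by the state-independent operator $\sum_{p} e_{r-1}(\cdots)\,E_{21}^{(p)}$, and both reduce cyclicity to the generic invertibility of the $n\times n$ matrix of elementary symmetric functions (the paper computes it as a Vandermonde via the factorization $K=K'K''$, you via the $\beta_q=0$ specialization). The remaining packaging differs only cosmetically---the paper twists the evaluation so the operators depend on $\underline a$ alone, scales along $(s^{-1}\underline a,\underline b+(s^{-1}-1)\underline a)$ and invokes openness of cyclicity at the $s=0$ limit, while you extract the top homogeneous component of each $D_m$ as $\det\operatorname{Sym}^m\mathcal D_1$---and the ``bookkeeping'' you flag is exactly the degree count the paper already performs when deriving the bound (\ref{ubound}).
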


\begin{proof}

We can define the evaluation module $M(a,b)$ by composing the evaluation morphism for the module $M(0,b-a)$ with the shift automorphism (\ref{shiftaut}) and the multiplication autmorphism (\ref{multaut})  $T(u)\mapsto (1+au^{-1})T(u+a)$. Then the twisted action on $M(0,b-a)$ is defined by
\[t_{11}(u)\mapsto 1+u^{-1}(a+e_{11}),\]
\[t_{12}(u)\mapsto u^{-1}e_{12},\]
\[t_{21}(u)\mapsto u^{-1}e_{21},\]
\[t_{22}(u)\mapsto 1+u^{-1}(a+e_{22}),\]
which gives a $\yt$ module isomorphic to $M(a,b)$.

We identify the tensor product of $M(0,b_i-a_i)$ with this twisted action with $\Dab$.
There $t_{21}^{(r)}$ acts by an element of $U(\gll)^{\otimes n}\otimes\mathbb C[a_1,\ldots,a_n]$ that has degree $r-1$ in $a_i$'s.

The vector space of the representation does not depend on $(\underline a,\underline b)$, so we can consider these representations as representations with the same vector space and the operators depend on $(\underline a,\underline b)$.

We can restrict the values of the parameters to the line $(s^{-1}\underline a, \underline b + (s^{-1}-1)\underline a)$ for a fixed $(\underline a, \underline b)$. The operator $t_{21}^{(r)}$ depends on $s$ and has a pole of order $r-1$ at $0$. So operators $s^{r-1}t_{21}^{(r)}$ are well-defined for all $s\in\mathbb C$.

The condition that the highest weight vector in $\Dab$ is cyclic is a Zariski open condition on the space of the parameters. We will show that for the point corresponding to $s=0$ added to the parameter space, the highest weight vector is cyclic. Hence it is cyclic for the parameter values from some Zariski open subset of the projective space. Thus the highest weight vector is cyclic for the values of the parameters from some Zariski open subset of the initial parameter space.

Consider $M(s^{-1}\underline a,\underline b +(s^{-1}-1)\underline a)$ and take the operator $s^{r-1}t_{21}^{(r)}$ at $s=0$. It will be equal to the highest degree (in $a_i$'s) term of the image of $t_{21}^{(r)}$: \[s^{r-1}t_{21}^{(r)}|_{s=0}=\sum_{i=1}^{n}e_{r-1}(a_1,\ldots,\hat{a_i},\ldots,a_n)1^{\otimes i-1}\otimes e_{21}\otimes 1^{\otimes n-i}\]
where $e_r(x_1,\ldots,x_n,\ldots)$ is the elementary symmetric function and $\hat{a}_i$ means omitting $a_i$.

Now to show that the vector $v_{a_1,b_1}\otimes\ldots\otimes v_{a_n,b_n}$ is cyclic for the action of $B$ in the limit module, it suffices to prove that it is cyclic for the collection of operators $\sum_{i=1}^{n}e_{r-1}(a_1,\ldots,\hat{a_i},\ldots,a_n)1^{\otimes i-1}\otimes e_{21}\otimes 1^{\otimes n-i}$, $1\leqslant r\leqslant n$.

To show this we will check that $\sum_{i=1}^{n}e_{r-1}(a_1,\ldots,\hat{a_i},\ldots,a_n)1^{\otimes i-1}\otimes e_{21}\otimes 1^{\otimes n-i}$, $1\leqslant r\leqslant n$, are linearly independent, and thus their linear combinations generate $1^{\otimes i-1}\otimes e_{21}\otimes 1^{\otimes n-i}$, $1\leqslant r\leqslant n$.

So we need to show that the determinant of the matrix $K$ where
\[K_{i,j}=e_{j-1}(a_1,\ldots,\hat a_i,\ldots,a_n)\]
is not zero for genetic values of $a_i$'s. Since
\[e_{j-1}(a_1,\ldots,\hat a_i,\ldots,a_n)=\sum_{k=1}^j (-1)^{j-k}a_i^{j-k}e_{k-1}(a_1,\ldots,a_n),\]
we can present $K$ as a product $K=K'K''$ where $K'_{ij}=(-1)^i e_{j-i}(a_1,\ldots,a_n)$ for $j\geqslant i$, $K'_{ij}=0$ for $j<i$ and $K''_{ij}=a_i^{j-1}$. Therefore \[\det K=\det K'\det K'' =(-1)^{\lfloor\frac{n}{2}\rfloor}\prod_{1\leqslant i<j \leqslant n}(a_i-a_j).\]
\end{proof}

The action of $B$ on $\tilde M(a_1,b_1)\otimes\ldots\otimes \tilde M(a_n,b_n)$ satisfies the conditions of lemma \ref{shapdetgen} since for generic values of parameters the module $\Dab$ is generated from the vector $v_{a_1,b_1}\otimes\ldots\otimes v_{a_n,b_n}$ by Lemma \ref{freeaction}. The dimension $\dim M_m={{m+n-1}\choose{n-1}}$. Therefore combining lemmas \ref{shapdetgen} and \ref{detfactors} we get that $D_m$ is divisible by $(a_i-b_j-k)^{{m+n-k-2}\choose{n-1}}$, $k+1\leqslant m$.

Hence $D_m$ is divisible by 
$$\prod_{k=0}^{m-1}\prod_{1\leqslant j<i\leqslant n}(a_i-b_j-k)^{{m+n-k-2}\choose{n-1}}$$

This gives a lower bound on the degree of the determinant:
\begin{equation}
\label{lbound}
    \operatorname{deg} D_m\geqslant {{n}\choose{2}} \sum_{k=0}^{m-1} {{n+m-k-2}\choose{n-1}} = {{n}\choose{2}} {{n+m-1}\choose{n}}.
\end{equation}

The following lemma shows that the bounds (\ref{ubound}) and (\ref{lbound}) are equal.

\begin{lemma}
$$\sum_{\substack{(k_1,\ldots,k_n)\\ \sum k_i=m}}(\sum_{i=1}^{n}(i-1)k_i)={{n}\choose{2}} {{n+m-1}\choose{n}}$$
\end{lemma}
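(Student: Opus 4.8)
The plan is to exploit the symmetry of the summation region. First I would interchange the order of summation, writing
\begin{equation*}
\sum_{\substack{(k_1,\ldots,k_n)\\ \sum k_i=m}}\Big(\sum_{i=1}^{n}(i-1)k_i\Big)=\sum_{i=1}^{n}(i-1)\,T_i,\qquad T_i:=\sum_{\substack{(k_1,\ldots,k_n)\\ \sum k_i=m}}k_i.
\end{equation*}
The constraint $k_1+\cdots+k_n=m$ with $k_j\in\mathbb Z_{\geqslant 0}$ is invariant under permutation of the indices, so each $T_i$ equals a common value $T$ independent of $i$.

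To evaluate $T$, I would sum the constraint itself over all compositions. There are $\binom{m+n-1}{n-1}$ solutions (this is exactly $\dim M_m$ recorded above), and on each solution the coordinates sum to $m$, so $\sum_{i=1}^{n}T_i=nT=m\binom{m+n-1}{n-1}$, giving $T=\tfrac{m}{n}\binom{m+n-1}{n-1}$. Combining this with $\sum_{i=1}^{n}(i-1)=\tfrac{n(n-1)}{2}=\binom{n}{2}$, the left-hand side becomes $\binom{n}{2}\,\tfrac{m}{n}\binom{m+n-1}{n-1}$.

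It then remains to check the elementary binomial identity $\tfrac{m}{n}\binom{m+n-1}{n-1}=\binom{n+m-1}{n}$, which follows by expanding both sides in factorials: $\tfrac{m}{n}\cdot\tfrac{(m+n-1)!}{(n-1)!\,m!}=\tfrac{(m+n-1)!}{n!\,(m-1)!}$. Substituting this into the previous display yields $\binom{n}{2}\binom{n+m-1}{n}$, which is the claimed value.

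Every step here is routine, so there is no serious obstacle; the only piece of genuine content is the symmetry observation that makes $T_i$ independent of $i$, which reduces the weighted sum to a single closed-form product times $\binom{n}{2}$. One could alternatively prove the identity via the generating function $\big(\sum_{k\geqslant 0}x^k\big)^n$ and differentiation, but the symmetry argument is shorter and avoids any manipulation of formal series.
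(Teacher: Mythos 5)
Your proof is correct, and it takes a genuinely different route from the paper's. The paper proves the identity by induction on $m+n$: it splits the sum according to the value of $k_n=j$, obtaining the recursion $S_{m,n}=\sum_{j=0}^{m}S_{m-j,n-1}+(n-1)\sum_{j=0}^{m}j\binom{m-j+n-2}{n-2}$, and then closes the induction using two hockey-stick/Vandermonde-type summation identities. You instead exchange the order of summation and invoke the permutation symmetry of the region $\{\sum k_i=m\}$ to see that $T_i$ is independent of $i$, then evaluate the common value $T$ by double counting, $nT=m\binom{m+n-1}{n-1}$; after the elementary identity $\tfrac{m}{n}\binom{m+n-1}{n-1}=\binom{m+n-1}{n}$ this yields the claim in a few lines, with no induction and no convolution identities. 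Your argument is shorter and more conceptual, isolating the one real idea (symmetry plus counting solutions of $\sum k_i=m$), whereas the paper's induction is more mechanical and forces verification of the two auxiliary binomial sums. One purely cosmetic remark: your factorial expansion implicitly assumes $m\geqslant 1$ (it involves $(m-1)!$); for $m=0$ both sides of the lemma vanish, so nothing is lost, but it is worth a word.
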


\begin{proof}
Denote $$S_{m,n} = \sum_{\substack{(k_1,\ldots,k_n)\\ \sum k_i=m}}(\sum_{i=1}^{n}(i-1)k_i)$$
We will prove the statement by induction on $m+n$.
For $n=1$ we have $S_{m,1}=0$ and ${{n}\choose{2}}=0$ hence the statement is correct.

Note that the number of $(k_1,\ldots,k_{n})\in\mathbb Z^{n}_{\geqslant 0}$ such that $\sum_{i=1}^n k_i=m$ and $k_n=j$ is equal to ${{m-j+n-2}\choose{n-2}}$ since it is the number of tuples $(k_1,\ldots,k_{n-1})\in\mathbb Z^{n-1}_{\geqslant 0}$ such that $\sum_{i=1}^{n-1} k_i=m-j$, i.e., the number of monomials of degree $m-j$ in $n-1$ variables. Therefore we can rewrite $S_{m,n}$ as following:
$$\sum_{\substack{(k_1,\ldots,k_n)\\ \sum k_i=m}}(\sum_{i=1}^{n}(i-1)k_i)=\sum_{j=0}^{m}\sum_{\substack{(k_1,\ldots,k_{n-1})\\ \sum k_i=m-j}}(\sum_{i=1}^{n-1}(i-1)k_i) + \sum_{j=0}^{m}(n-1)j{{m-j+n-2}\choose{n-2}}=$$\\
$$= \sum_{j=0}^{m}S_{m-j,n-1} + (n-1)\sum_{j=0}^{m}j{{m-j+n-2}\choose{n-2}}$$

Now we calculate this sum using the induction assumption:

$$\sum_{j=0}^{m}S_{m-j,n-1}=\sum_{j=0}^{m}{{n-1}\choose{2}}{{n+m-j-2}\choose{n-1}}={{n-1}\choose{2}}{{n+m-1}\choose{n}}$$

$$\sum_{j=0}^{m}j{{m-j+n-2}\choose{n-2}}=\sum_{j=0}^{m}{{j}\choose{1}}{{m-j+n-2}\choose{n-2}}={{n+m-1}\choose{n}}$$

Therefore $S_{m,n}={{n-1}\choose{2}}{{n+m-1}\choose{n}}+(n-1){{n+m-1}\choose{n}}={{n}\choose{2}}{{n+m-1}\choose{n}}$.

\end{proof}

\section{Bethe subalgebras and families of algebras}

Let $A$ be an algebra with a filtration $F^{\bullet}A$. Following the \cite{HKRW}, we give the definition of a family of subalgebras. Having a family of subalgebras of an algebra $A$ parametrized by a variety $X$ means that we are given a subalgebra $\mathcal A(x)$, for each $x\in X$, such that for each $N \in \mathbb N$, $d_N := dim(\mathcal A(x) \cap F^N A)$ is independent of $x$, and such that the resulting map
\[
X \to \mathbb G(d_N , F^N A)\]\[
x \mapsto \mathcal A(x) \cap F^N A
\]
is a morphism of algebraic varieties (here $\mathbb G(d, V)$ denotes the Grassmannian of
$d$-dimensional subspaces of $V$).

If we have a family of subalgebras of $A$ parametrized by a variety $X$, we can take the closure of this family in the following way.

Consider Rees algebra $\mathfrak A=\bigoplus_{N\geqslant 0} F^N A$ of the filtered algebra $A$. The multiplication $F^{N}A\otimes F^MA\to F^{N+M}A$ is defined by multiplication in $A$. There is a homomorphism of algebras projection map $p\colon\mathfrak A\to A$ that is defined on $F^N A$ as identity map.

The Rees algebra $\mathfrak{A}$ is an algebra over $
\mathbb C[t]$ with $t$ being the unit in degree $1$. If we set $t=1$, we get the initial algebra $A$, and for $t=0$ we get the associated graded of $A$.

For each $x\in X$ we can lift the subalgebra $\mathcal A(x)$ to a subalgebra $p^{-1}(\mathcal A(x))=\mathfrak A(x)\subset \mathfrak A$ and thus obtain a family of subalgebras of $\mathfrak{A}$. We will define the closure of this family and then take its image in $A$.

For each $N$, let \[Z_N \subset \prod_{N'\leqslant N} \mathbb G(d_{N'}, F^{N'} A)\] be the closure of the image of the product map \[X \to\prod_{N'\leqslant N} \mathbb G(d_{N'} , F^{N'} A).\]
Then there are surjective restriction maps $Z_N \to Z_M$ for any $N > M$. The inverse
limit $Z = \varprojlim Z_N$ is well-defined as a pro-algebraic scheme.
We can project \[Z\subset\prod_{N\geqslant 0}\mathbb G(d_N,F^N A)\] to $\mathbb Gr(d_N,F^N A)$ for each $N\geqslant 0$. Therefore for each $z\in Z$ we get a graded subspace $\mathfrak A(z)$ of $\mathfrak{A}$ with $\dim \mathfrak A(z)_N=d_N$. 

Clearly, this defines a commutative graded subalgebra of $\mathfrak A$  since the conditions of being a subalgebra and of being commutative are closed conditions. It projects to a commutative subalgebra of $A$. 

Now we want to apply this general theory in the case of Bethe subalgebras of $\yt$.

Bethe subalgebra $B(C)$ is a commutative subalgebra of $\yt$ that depends on $C=(c_{ij})\in\gll$. It is generated by the coefficients of the quantum determinant $$\operatorname{qdet}T(u)=t_{11}(u)t_{22}(u-1)-t_{21}(u)t_{12}(u-1)$$ and by the coefficients of $\operatorname{tr}CT(u)=c_{11}t_{11}(u)+c_{12}t_{21}(u)+c_{21}t_{12}(u)+c_{22}t_{22}(u)$ where
\[
T(u)=
  \begin{pmatrix}
    t_{11}(u) & t_{12}(u) \\
    t_{21}(u) & t_{22}(u)
  \end{pmatrix}
  .
\]

These generators are algebraically independent for $C$ not equal to a multiple of $\left (\begin{smallmatrix} 1 & 0\\ 0 & 1 \end{smallmatrix}\right )$. Therefore we have a family of subalgebras of $\yt$ over $\mathbb CP^3-\{\left (\begin{smallmatrix} 1 & 0\\ 0 & 1 \end{smallmatrix}\right )\}$ ($B(C)$ depends on $C$ up to a scalar multiple). Over $\left (\begin{smallmatrix} 1 & 0\\ 0 & 1 \end{smallmatrix}\right )$ it happens that $t^{(1)}_{11}+t^{(1)}_{22}$ is both the coefficient of $\operatorname{qdet}T(u)$ and $t_{11}(u)+t_{22}(u)$. We want to define the closure of this family of subalgebras of $\yt$.

\begin{proposition}
\label{limalg}
The closure of the family of Bethe subalgebras $\mathcal B$ is defined over $Z$, the blow up of $\mathbb CP^3$ at $\left (\begin{smallmatrix} 1 & 0\\ 0 & 1 \end{smallmatrix}\right )$. Algebra $\mathcal B(x)$ over a point of the exceptional divisor $x\in\mathbb CP^2\subset Z$ is generated by $B(\left (\begin{smallmatrix} 1 & 0 \\ 0 & 1 \end{smallmatrix}\right ))$ and an element $t\in\langle t_{12}^{(1)}, t_{21}^{(1)}, t_{11}^{(1)}-t_{22}^{(1)}\rangle\simeq\sll\subset\yt$ such that $x$ represents the line $\mathbb C t$.
\end{proposition}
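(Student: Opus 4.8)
The plan is to reduce the whole statement to an analysis of the degree one part of the filtration, where the only degeneration of the generators occurs, and then to feed this into the closure construction of the previous section. Throughout write $I=\left(\begin{smallmatrix}1&0\\0&1\end{smallmatrix}\right)$ and $[I]$ for its class in $\mathbb P(\mathrm{Mat}_2)=\mathbb{CP}^3$.

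\textbf{Degree one.} First I would compute $B(C)\cap F^1\yt$. The coefficient of $u^{-1}$ in $\operatorname{qdet}T(u)$ is $\tau:=t_{11}^{(1)}+t_{22}^{(1)}$, and the coefficient of $u^{-1}$ in $\operatorname{tr}CT(u)$ is $\ell_C=c_{11}t_{11}^{(1)}+c_{22}t_{22}^{(1)}+c_{12}t_{21}^{(1)}+c_{21}t_{12}^{(1)}$. Hence $B(C)\cap F^1\yt=\langle 1,\tau,\ell_C\rangle$, which is $3$-dimensional exactly when $\ell_C\notin\mathbb C\tau$, i.e. when $C$ is not scalar, so $d_1=3$ on the family. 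Passing to the quotient by the fixed plane $\langle 1,\tau\rangle$ and using $F^1\yt/\langle 1,\tau\rangle\cong\langle t_{11}^{(1)}-t_{22}^{(1)},t_{12}^{(1)},t_{21}^{(1)}\rangle\cong\sll$, the assignment $C\mapsto B(C)\cap F^1\yt$ becomes the linear projection $\pi\colon\mathbb P(\mathrm{Mat}_2)\dashrightarrow\mathbb P(\sll)$ sending $C$ to its traceless part, whose indeterminacy locus is precisely $[I]$.

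\textbf{Identifying $Z$ with the blow up.} Away from $[I]$ the family is faithful: for regular $C$ the subalgebra $B(C)$ is maximal commutative by \cite{NO} and determines $[C]$, so for $N\gg 0$ the product map $\mathbb P(\mathrm{Mat}_2)\setminus[I]\to\prod_{N'\le N}\mathbb G(d_{N'},F^{N'}\yt)$ is injective and $Z_N$ is birational to $\mathbb{CP}^3$ via the blow down. Its degree one component is the projection $\pi$, which already separates the directions of approach to $[I]$. Since the closure of the graph of a linear projection from a point is the blow up of the source at that point, I would extend the family map to a morphism on $\mathrm{Bl}_{[I]}\mathbb{CP}^3$ and check that it is bijective onto $Z$: the open part recovers the blow down to $\mathbb{CP}^3$, while the degree one datum identifies the exceptional divisor with $\mathbb P(\sll)=\mathbb{CP}^2$. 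This yields $Z=\varprojlim Z_N\cong\mathrm{Bl}_{[I]}\mathbb{CP}^3$.

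\textbf{The fibre over the exceptional divisor.} Fix a boundary point $x$ corresponding to a line $\mathbb C t\subset\sll$ and compute $\mathcal B(x)$ as the flat limit of $B(I+sC_0)$ as $s\to 0$, where $C_0$ is traceless and represents $t$. The coefficients of $\operatorname{qdet}T(u)$ are constant, and those of $\operatorname{tr}(I+sC_0)T(u)=\operatorname{tr}T(u)+s\operatorname{tr}C_0T(u)$ converge coefficientwise to those of $\operatorname{tr}T(u)$, so $B(I)\subseteq\mathcal B(x)$. In degree one the subspace $\langle\tau,\tau+s\,\ell_{C_0}\rangle=\langle\tau,\ell_{C_0}\rangle$ is in fact constant in $s\neq0$, so its limit contributes the new element $t=\ell_{C_0}$, giving $\langle B(I),t\rangle\subseteq\mathcal B(x)$. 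Commutativity of this subalgebra is automatic, being inherited from $\mathcal B(x)$, a limit of commutative subalgebras. Since $\dim\mathcal B(x)\cap F^N\yt=d_N$ by construction, the claimed identification $\mathcal B(x)=\langle B(I),t\rangle$ reduces to the equality $\dim\langle B(I),t\rangle\cap F^N\yt=d_N$ for all $N$.

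\textbf{Main obstacle.} The crux is this last dimension count: showing that for \emph{every} line $\mathbb C t\subset\sll$ the element $t$ is algebraically independent over $B(I)$, so that adjoining $t$ is a free polynomial extension restoring the generic Poincaré series $\prod_{k\ge1}(1-t^k)^{-2}$. Because $B(I)$ is short exactly one degree one generator relative to the generic $B(C)$, this freeness simultaneously pins down the fibre and guarantees that blowing up the single point $[I]$ suffices, with no further blow ups needed. I would establish it by passing to the associated graded $\mathrm{gr}\,\yt\cong U(\gll[z])$ and verifying that the symbols of a generating set of $B(I)$ together with the symbol of $t$ remain algebraically independent; the delicate case is that of degenerate directions $t$ (for instance $t$ nilpotent, the situation of Section 4), where the uniformity of the argument over all of $\mathbb P(\sll)$ is least transparent.
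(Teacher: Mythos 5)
Your overall architecture parallels the paper's: analyze the degree-one component, realize the parameter space as the blow up, show the boundary algebra contains $B(I)$ and $t$ (where $I=\left(\begin{smallmatrix}1&0\\0&1\end{smallmatrix}\right)$), and reduce everything to the dimension count $\dim\langle B(I),t\rangle\cap F^N\yt=d_N$. The containment step via the pencil $I+sC_0$ and the identity $\langle\tau,\tau+s\ell_{C_0}\rangle=\langle\tau,\ell_{C_0}\rangle$ is correct. However, the step you label the ``main obstacle'' is not an incidental verification to be deferred --- it is the actual content of the proposition --- and the method you propose for it breaks down. In the associated graded $\mathrm{gr}'\yt\cong U(\gll[z])$ (filtration $\deg t_{ij}^{(r)}=r-1$), the symbol of the $u^{-r}$-coefficient of $\operatorname{qdet}T(u)$ and the symbol of the $u^{-r}$-coefficient of $\operatorname{tr}T(u)$ are \emph{both} equal to $(E_{11}+E_{22})z^{r-1}$; the natural generators of $B(I)$ have pairwise coinciding symbols, so ``verifying that the symbols of a generating set of $B(I)$ remain algebraically independent'' is impossible as stated. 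One must instead work with corrected generators (differences of quantum-determinant and trace coefficients, whose symbols sit in strictly lower filtration degree) and compute the Poincar\'e series of $B(I)$ itself. The paper does not reprove this; it imports it as Proposition 6.1 of \cite{IR1}, which gives that $B(I)\cap F^N\yt$ has, for $N\geqslant 1$, only one fewer algebraically independent element than $B(C)\cap F^N\yt$ for generic $C$, so that adjoining a single independent degree-one element restores the generic Poincar\'e series. Without this input (or an equivalent computation of $P_{B(I)}$) your proof is incomplete. Note also that your diagnosis of where the delicacy lies is inverted: for directions $t$ with a nonzero off-diagonal part (in particular nilpotent $t$), independence of $t$ over $B(I)$ is easy from the $h$-weight grading, since $B(I)$ is concentrated in weight zero; the genuinely hard, direction-independent input is the structure of $B(I)$ itself.

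A secondary difference: the paper does not identify $Z=\varprojlim Z_N$ with the blow up by computing limits of the open family, as you propose; it argues in the opposite direction. It explicitly defines a candidate commutative algebra $\mathcal B(x)$ at \emph{every} point of the blow up (generated by the center, the coefficients of $\operatorname{tr}XT(u)$, and the degree-one element prescribed by the $\mathbb{CP}^2$-coordinate), with commutativity obtained as a closed condition, constant Poincar\'e series from the count above, and injectivity of the resulting map to the Grassmannians; properness of the blow up then forces its image to be the closure of the open family. This sidesteps an issue your sketch glosses over: to know that the fibre of $Z$ over $[I]$ is exactly $\mathbb{CP}^2$, one must rule out that higher-degree Grassmannian data separates limits taken along different curves having the same tangent direction at $[I]$ (which would make $Z$ larger than the blow up). The properness argument disposes of this for free, whereas your direct flat-limit computation would have to address it explicitly, again relying on the same dimension count.
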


\begin{proof}
As in Proposition 1.5.2 and Theorem 1.7.5 in \cite{M}, we can consider a filtration on $\yt$ such that $\operatorname{deg}t_{ij}^{(r)}=r-1$. The corresponding associated graded $\mathrm{gr}'\yt$ is isomorphic to $U(\gll)$. The coefficient of $u^{-r}$ in $\operatorname{qdet}T(u)$ has the form $t_{11}^{(r)}+t_{22}^{(r)}$ plus terms of degree less than $r-1$. The coefficient of $u^{-r}$ in $\operatorname{tr} CT(u)$ has the form
$$c_{11}t_{11}^{(r)}+c_{12}t_{21}^{(r)}+c_{21}t_{12}^{(r)}+c_{22}t_{22}^{(r)}.$$
It is clear that their images are algebraically independent in $\mathrm{gr}'\yt$ unless $C$ is a multiple of $\left (\begin{smallmatrix} 1 & 0\\ 0 & 1 \end{smallmatrix}\right )$.

Therefore for any $C$ not equal to a multiple of $\left (\begin{smallmatrix} 1 & 0\\ 0 & 1 \end{smallmatrix}\right )$, $B(C)\cap F^N\yt$ has the same dimension. 


We will construct a family of commutative subalgebras of $\yt$ over $Z$ with constant Poincare series (i.e. independent of $x\in Z$) by explicitly defining the subalgebra for each poing of $Z$. It is clear that such family can be lifted to the corresponding family of commutative subalgebras of the Rees algebra of $\yt$ with constant Poincare series.

If we have an embedding of $Z$ into $\prod_{N\geqslant 0}\mathbb G(d_n, F^N\yt)$ extending the embedding
$$\mathbb CP^3 - \left \{\left [\left (\begin{smallmatrix} 1 & 0 \\ 0 & 1 \end{smallmatrix}\right )\right ]\right \}\to\prod_{N\geqslant 0}\mathbb G(d_n, F^N\yt)$$
then the image of $Z$ is the closure of the image of $\mathbb CP^3 -\left \{ \left [\left (\begin{smallmatrix} 1 & 0 \\ 0 & 1 \end{smallmatrix}\right )\right ]\right\}$ in $\prod_{N\geqslant 0}\mathbb G(d_n, F^N\yt)$ since $Z$ is proper.

The variety $Z=\{(\left [\left (\begin{smallmatrix} x_{11} & x_{12} \\ x_{21} & x_{22} \end{smallmatrix}\right )\right ],[y_0:y_1:y_2])\mid y_1(x_{11}-x_{22})=y_0x_1,y_2x_{12}=y_1x_{21},y_2(x_{11}-x_{22})=y_0x_{21}\}\subset\mathbb CP^3\times\mathbb CP^2$ where $[y_0:y_1:y_2]$ are the coordinates on $\mathbb CP^2$. So if $\left [\left (\begin{smallmatrix} x_{11} & x_{12} \\ x_{21} & x_{22} \end{smallmatrix}\right )\right ]\neq \left [\left (\begin{smallmatrix} 1 & 0 \\ 0 & 1 \end{smallmatrix}\right )\right ]$, $[y_0:y_1:y_2]=[x_{11}-x_{22}:x_{12}:x_{21}]$, and otherwise any point of $\mathbb CP^2$ satisfies the equations.

For the point $x=\left (\left [\left (\begin{smallmatrix} x_{11} & x_{12} \\ x_{21} & x_{22} \end{smallmatrix}\right )\right ],[y_0,y_1,y_2]\right)\in Z$ let the subalgebra $\mathcal B(x)$ be generated by the center of $\yt$, the coefficients of $\operatorname{tr} \left(\begin{smallmatrix} x_{11} & x_{12} \\ x_{21} & x_{22} \end{smallmatrix}\right )T(u)$ and $\frac{1}{2}y_0t_{11}^{(1)}+y_1t_{21}^{(1)}+y_2t_{12}^{(1)}-\frac{1}{2}y_0t_{22}^{(1)}$.

Note that if $\left [\left (\begin{smallmatrix} x_{11} & x_{12} \\ x_{21} & x_{22} \end{smallmatrix}\right )\right ]\neq \left [\left (\begin{smallmatrix} 1 & 0 \\ 0 & 1 \end{smallmatrix}\right )\right ]$, then $\mathcal B(x)=B(C)$ with $C=\left (\begin{smallmatrix} x_{11} & x_{12} \\ x_{21} & x_{22} \end{smallmatrix}\right )$.

If $\left [\left (\begin{smallmatrix} x_{11} & x_{12} \\ x_{21} & x_{22} \end{smallmatrix}\right )\right ]= \left [\left (\begin{smallmatrix} 1 & 0 \\ 0 & 1 \end{smallmatrix}\right )\right ]$, then $\mathcal B(x)$ is generated by $B(\left (\begin{smallmatrix} 1 & 0 \\ 0 & 1 \end{smallmatrix}\right ))$ and $\frac{1}{2}y_0t_{11}^{(1)}+y_1t_{21}^{(1)}+y_2t_{12}^{(1)}-\frac{1}{2}y_0t_{22}^{(1)}$. The latter element does not lie in $B(\left (\begin{smallmatrix} 1 & 0 \\ 0 & 1 \end{smallmatrix}\right ))$. This algebra is commutative since commutativity is a closed condition.

By Proposition 6.1 in \cite{IR1}, the number of algebraically independent elements in $B(\left (\begin{smallmatrix} 1 & 0\\ 0 & 1 \end{smallmatrix}\right ))\cap F^N\yt$ is at least the number of algebraically independent elements in $B(C)\cap F^N\yt$ minus $1$ for $N\geqslant 1$ for generic $C$. Hence by adding an algebraically independent element which lies in $F^1\yt - F^0\yt$, we get the same Poincare series as for a generic Bethe subalgebra.

The algebras $\mathcal B(x)$ are different for different points of $Z$.

Therefore for each $x\in Z$ we obtained a commutative subalgebra $\mathcal B(x)$, the Poicare series of $\mathcal B(x)$ are constant, the regular map $$Z\to\prod_{N\geqslant 0}\mathbb G(d_n, F^N\yt)$$ is an embedding and extends the map
$$\mathbb CP^3 - \left \{\left [\left (\begin{smallmatrix} 1 & 0 \\ 0 & 1 \end{smallmatrix}\right )\right ]\right \}\to\prod_{N\geqslant 0}\mathbb G(d_n, F^N\yt).$$ Hence we obtained the closure of the family of Bethe subalgebras.


\end{proof}

\section{Cyclic vector for Bethe subalgebras}
Let $L(a,b)$ be the evaluation representation of $\yt$ with parameters $a, b\in\mathbb C, a-b\in\mathbb Z_{>0}$. Consider $B(C)\subset \yt$, the Bethe subalgebra corresponding to a matrix $C\in\gll$.

We define a string to be a set $S(a,b)=\{a-1,a-2,\ldots,b+1,b\}$ for $a,b\in\mathbb C$, $a>b$. It is known that the representation $\Lab$ is irreducible if, for any $1\leqslant i<j\leqslant n$, $S(a_i,b_i)\cup S(a_j,b_j)$ is not a string. 

\begin{proposition}
\label{cyclvectnonlimit}
The representation $\Lab$ of $\yt$ such that, for any $1\leqslant i<j\leqslant n$, $S(a_i,b_i)\cup S(a_j,b_j)$ is not a string, has a cyclic vector for the action of $B(C)$ where $C$ is a nonscalar matrix.
\end{proposition}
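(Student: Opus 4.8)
The plan is to reduce the statement to the principal nilpotent $C=e_{12}$ and then to invoke the cyclicity of the highest weight vector in a tensor product of Verma modules established in Section~\ref{shap} and Section~4.

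First I would carry out the reduction. Recall that $B(\lambda C)=B(C)$ for $\lambda\in\mathbb C^\times$, and that the automorphism $\phi_G\colon T(u)\mapsto GT(u)G^{-1}$ of $\yt$ sends $\operatorname{tr}C\,T(u)$ to $\operatorname{tr}(G^{-1}CG)T(u)$ and fixes $\operatorname{qdet}T(u)$, so $\phi_G(B(G^{-1}CG))=B(C)$. Since $\operatorname{Ad}_G$ is an inner automorphism of $\gll$ and $\phi_G$ is a coalgebra automorphism, twisting $\Lab$ by $\phi_G$ produces an isomorphic $\yt$-module; hence the property ``$\Lab$ has a cyclic vector for $B(C)$'' is invariant under conjugating and dilating $C$. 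Next I would check that the set of $C$ for which $\Lab$ is a cyclic $B(C)$-module is open: the function $C\mapsto \max_{v}\dim B(C)\cdot v$ is lower semicontinuous (rank is lower semicontinuous in $(C,v)$, and maximizing over the compact $v\in\mathbb P(\Lab)$ preserves this), so the locus where it equals $\dim\Lab$ is open. Finally, any nonscalar $C$ can be conjugated and dilated arbitrarily close to $e_{12}$: after conjugating $\operatorname{diag}(\lambda_1,\lambda_2)$ or a Jordan block by $\operatorname{diag}(t,1)$ and rescaling, the result tends to $e_{12}$ as $t\to\infty$, so $[e_{12}]$ lies in the closure of the conjugation-and-dilation orbit of every nonscalar $[C]$. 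Combining these three facts, it suffices to treat $C=e_{12}$.

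For $C=e_{12}$ the strings hypothesis makes $\Lab$ irreducible, and I would exploit the freedom to permute the tensor factors. The key combinatorial point is that the factors can be reordered to $(\underline a',\underline b')$ so that $a'_i-b'_j\notin\mathbb Z_{\geqslant 0}$ for all $j<i$. Indeed, a forced constraint between two factors arises only when their endpoints differ by integers, and then ``$S(a,b)\cup S(a',b')$ not a string'' forces the two integer segments to be disjoint with a gap; placing the spatially higher segment earlier removes the obstruction, and this partial order is acyclic because the lower endpoint $b$ strictly decreases along every forced edge, while non-integer-related factors impose no constraint. By Proposition~\ref{shapdetyang} the Shapovalov determinant $D_m$ of the Verma tensor product $M(\underline a',\underline b')$ is, up to a constant, a product of powers of the $a'_i-b'_j-l$ with $j<i$, which is nonzero for every $m$ precisely because the admissible order kills all such factors; hence the highest weight vector generates $M(\underline a',\underline b')$ over $B(e_{12})$. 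The tensor product of the surjections $M(a'_k,b'_k)\twoheadrightarrow L(a'_k,b'_k)$ is a surjection of $\yt$-modules sending highest weight vector to highest weight vector, so the reordered finite-dimensional module is cyclic for $B(e_{12})$; by irreducibility the map $\sigma^R$ of Proposition~\ref{rephomom} is a $\yt$-isomorphism of this reordered module with $\Lab$ carrying highest weight vector to highest weight vector, and since it intertwines the $B(e_{12})$-action, the highest weight vector of $\Lab$ is cyclic.

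I expect the main obstacle to be the reordering step: one must show that the strings hypothesis always permits a global reordering satisfying $a'_i-b'_j\notin\mathbb Z_{\geqslant 0}$ for $j<i$, and that passing between orderings via $\sigma^R$ and between Verma and finite-dimensional modules via the canonical surjection both preserve the highest weight vector and the $B(e_{12})$-action—this is exactly what links the Shapovalov computation of Section 4 to the irreducible module $\Lab$. By comparison, the openness and conjugation-invariance of the cyclicity locus, though requiring some care, are routine.
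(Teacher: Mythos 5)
Your proposal is correct and follows essentially the same route as the paper: reduce to $C=e_{12}$ via conjugation and dilation together with openness of the cyclicity locus, reorder the tensor factors so that $a_i-b_j\notin\mathbb Z_{\geqslant 0}$ for $i>j$, apply Proposition \ref{shapdetyang} to get cyclicity of the highest weight vector in the Verma tensor product, and push it down along the surjection onto $\Lab$. You even fill in details the paper leaves implicit (the acyclicity of the forced ordering, the semicontinuity argument for openness, the non-semisimple case); the only cosmetic slip is invoking the map $\sigma^R$ of Proposition \ref{rephomom}, which is defined for two-dimensional factors—but any $\yt$-isomorphism between the two orderings, which exists by irreducibility, already transports cyclicity.
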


\begin{proof}
From the condition on parameters of the representations it follows that $\Lab$ is irreducible. If we permute the factors of such representation, we get an isomorphic representation. Therefore we can arrange the factors so that $a_i - b_j \notin \mathbb Z_{\geqslant 0}$ for any pair $i>j$.
 
First, consider $C=e_{12}$. We apply proposition \ref{shapdetyang}. The tensor factors of $\Lab$ are ordered so that the determinant $D_m$ does not vanish for any $m$ and hence the vector $v_{a_1,b_1}\otimes\ldots\otimes v_{a_n,b_n}\in\Dab$ is cyclic for $B(e_{12})$. We have that $\Dab\twoheadrightarrow \Lab$ is a surjection and the image of the vector $v_{a_1,b_1}\otimes\ldots\otimes v_{a_n,b_n}$ in $\Lab$ is cyclic for the action of $B(e_{12})$.

For the general case note that for any semisimple nonscalar matrix $C$ we can choose a matrix $A\in GL(2,\mathbb C)$ and a scalar $\lambda\in\mathbb C$ such that $\lambda A^{-1}CA$ lies in an arbitrary chosen open neighbourhood of $e_{12}$. The condition that a representation has a cyclic vector is open, therefore it follows that the action of Bethe subalgebra $B(A^{-1}CA)$ on $\Lab$ has a cyclic vector under the same conditions on $a_i$'s and $b_i$'s as for $B(e_{12})$.

The Yangian automorphism $T(u)\mapsto AT(u)A^{-1}$ maps $B(C)$ to $B(A^{-1}CA)$. The representation $\Lab$ is finite dimensional, so the action of $\gll$ lifts to the action of $GL(2,\mathbb C)$. Hence conjugation by $A$ gives an isomorphic representation. Therefore $B(C)$ has a cyclic vector in $\Lab$.
\end{proof}

It takes somewhat more effort to prove that the algebras appearing in the limit of the family of Bethe subalgebras act on such representations with a cyclic vector. This has been shown in \cite{MTV2}. We provide an independent proof that follows the argument in \cite{HKRW}.

There is an $\sll$-triple in $\yt$: $f=t^{(1)}_{21}$, $e=t^{(1)}_{12}$ and $h=t^{(1)}_{11}-t^{(1)}_{22}$. The limit subalgebras in $\mathcal B$ are generated by the coefficients of $\operatorname{qdet}T(u)$, $t_{11}(u)+t_{22}(u)$ and one of the elements of this $\sll\subset\yt$.

The grading on $\yt$ defined by the adjoint action of $h$ restricts to a decreasing filtration on $B(\left (\begin{smallmatrix} 1 & 1 \\ 0 & 1 \end{smallmatrix}\right ))$ and a grading on $B(\left (\begin{smallmatrix} 1 & 0 \\ 0 & 1 \end{smallmatrix}\right ))$. Note that the degrees of all coefficients of $\mathrm{qdet}(u)$, $t_{11}(u)$ and $t_{22}(u)$ are zero so $B(\left (\begin{smallmatrix} 1 & 0 \\ 0 & 1 \end{smallmatrix}\right ))$ lies in degree $0$. The generators of $B(\left (\begin{smallmatrix} 1 & 1 \\ 0 & 1 \end{smallmatrix}\right ))$ are non-homogeneous and their highest degree components have degree $0$.

\begin{lemma}
The associated graded with respect to this filtrartion $\operatorname{gr}B(\left (\begin{smallmatrix} 1 & 1 \\ 0 & 1 \end{smallmatrix}\right ))=\langle B(\left (\begin{smallmatrix} 1 & 0 \\ 0 & 1 \end{smallmatrix}\right )),t_{21}^{(1)}\rangle$.
\end{lemma}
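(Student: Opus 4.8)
The plan is to prove the two inclusions separately, using a Poincaré series count for the harder direction. Write $B^+=B(\left(\begin{smallmatrix}1&1\\0&1\end{smallmatrix}\right))$ and $B^0=B(\left(\begin{smallmatrix}1&0\\0&1\end{smallmatrix}\right))$, and let $\operatorname{gr}$ denote the associated graded for the decreasing filtration coming from the $\operatorname{ad}(h)$-grading, so that the symbol of an element is its homogeneous component of highest $\operatorname{ad}(h)$-degree. Recall that $B^+$ is generated by the (central, hence $\operatorname{ad}(h)$-degree $0$) coefficients of $\operatorname{qdet}T(u)$ together with the coefficients $g_r=t_{11}^{(r)}+t_{21}^{(r)}+t_{22}^{(r)}$ of $\operatorname{tr}\left(\begin{smallmatrix}1&1\\0&1\end{smallmatrix}\right)T(u)$, whose degree-$0$ component is $t_{11}^{(r)}+t_{22}^{(r)}$ and whose degree-$(-2)$ component is $t_{21}^{(r)}$.

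First I would establish $\operatorname{gr}B^+\supseteq\langle B^0,t_{21}^{(1)}\rangle$. The symbols of the generators of $B^+$ already include the coefficients of $\operatorname{qdet}T(u)$ and the elements $t_{11}^{(r)}+t_{22}^{(r)}$ (the symbol of $g_r$), and these generate $B^0$; hence $B^0\subseteq\operatorname{gr}B^+$. For the extra generator, note that $t_{11}^{(1)}+t_{22}^{(1)}$ is the coefficient of $u^{-1}$ in $\operatorname{qdet}T(u)$, so it is central and lies in $B^+$; therefore $t_{21}^{(1)}=g_1-(t_{11}^{(1)}+t_{22}^{(1)})$ is an element of $B^+$ that is homogeneous of degree $-2$, and its symbol is $t_{21}^{(1)}$. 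Since $\operatorname{gr}B^+$ is a subalgebra it then contains $\langle B^0,t_{21}^{(1)}\rangle$. It is worth stressing that this step uses $r=1$ essentially: for $r\ge 2$ the degree-$0$ part $t_{11}^{(r)}+t_{22}^{(r)}$ of $g_r$ is not itself central and does not lie in $B^+$, so one cannot split off $t_{21}^{(r)}$ in the same way.

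For the reverse inclusion I would compare Hilbert--Poincaré series with respect to the filtration $F^{\bullet}\yt$ given by $\deg t_{ij}^{(r)}=r$. The key point is that $\operatorname{ad}(h)$ preserves this filtration: a direct computation from the defining relations gives $[h,t_{ij}^{(s)}]\in\{0,\pm 2 t_{ij}^{(s)}\}$, so each $\operatorname{ad}(h)$-eigenspace is compatible with $F^{\bullet}$ and passing to $\operatorname{gr}$ preserves the $F$-graded dimensions, whence $P_{\operatorname{gr}B^+}(t)=P_{B^+}(t)$. Since $\left(\begin{smallmatrix}1&1\\0&1\end{smallmatrix}\right)$ is non-scalar, hence regular, $B^+$ is a maximal commutative Bethe subalgebra with the generic series $P_{B^+}(t)=\prod_{k\ge 1}(1-t^k)^{-2}$. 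On the other hand, $\langle B^0,t_{21}^{(1)}\rangle$ is exactly the algebra $\mathcal B(x)$ attached in Proposition \ref{limalg} to the point of the exceptional divisor with $[y_0:y_1:y_2]=[0:1:0]$, and that proposition shows it is commutative with the same series $\prod_{k\ge 1}(1-t^k)^{-2}$. As both sides have finite and equal dimension in each filtration degree, the inclusion already proved forces equality.

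The main obstacle is precisely this matching of Poincaré series. A priori $\operatorname{gr}B^+$ could contain symbols not generated by the symbols of the $g_r$ and the center --- in particular the elements $t_{21}^{(r)}$ for $r\ge 2$, which do not lie in $\langle B^0,t_{21}^{(1)}\rangle$ --- and nothing in the first inclusion rules this out. It is the equality of series, resting on the compatibility of the $\operatorname{ad}(h)$-grading with $F^{\bullet}\yt$ and on the computation of the limit algebra in Proposition \ref{limalg}, that pins down $\operatorname{gr}B^+$ exactly and excludes such extra symbols.
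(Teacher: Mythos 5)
Write $B^+=B\left(\left(\begin{smallmatrix}1&1\\0&1\end{smallmatrix}\right)\right)$ and $B^0=B\left(\left(\begin{smallmatrix}1&0\\0&1\end{smallmatrix}\right)\right)$ as in your proposal, and $L=\langle B^0,t_{21}^{(1)}\rangle$. Your first inclusion $\operatorname{gr}B^+\supseteq L$ is correct and coincides with the paper's argument. The gap is in the reverse inclusion, at the assertion that $P_{\operatorname{gr}B^+}(t)=P_{B^+}(t)$. The compatibility of the $\operatorname{ad}(h)$-grading with $F^{\bullet}\yt$ gives only one inequality: since each $F^N\yt$ is an $\operatorname{ad}(h)$-graded subspace, the symbol of any element of $B^+\cap F^N\yt$ lies in $F^N\yt$, hence $\operatorname{gr}(B^+\cap F^N\yt)\subseteq(\operatorname{gr}B^+)\cap F^N\yt$ and so $\dim(B^+\cap F^N\yt)\leqslant\dim\left((\operatorname{gr}B^+)\cap F^N\yt\right)$. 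This is the wrong direction: to conclude $\operatorname{gr}B^+=L$ from $L\subseteq\operatorname{gr}B^+$ you need to bound $\operatorname{gr}B^+$ \emph{above} by $P_L$, and that bound does not follow from compatibility alone. Concretely, in $\mathbb C[x,y]$ with $x$ of grading-degree $0$ and $F$-degree $1$, and $y$ of grading-degree $-2$ and $F$-degree $2$, every $F^N$ is a graded subspace, yet $B=\mathbb C[x+y]$ has $\operatorname{gr}B=\mathbb C[x]$ and $\dim(\operatorname{gr}B\cap F^1)=2>1=\dim(B\cap F^1)$: passing to $\operatorname{gr}$ can strictly increase $F$-graded dimensions. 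So with what you have proved, $\operatorname{gr}B^+$ could still contain, say, $t_{21}^{(r)}$ for $r\geqslant 2$ --- exactly the elements you say must be excluded --- without contradicting either $L\subseteq\operatorname{gr}B^+$ or any of your dimension counts.

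The missing ingredient, which is the actual content of the paper's (terse) proof, is the algebraic independence of the leading terms. Since $\left(\begin{smallmatrix}1&1\\0&1\end{smallmatrix}\right)$ is regular, $B^+$ is a free polynomial algebra on the quantum determinant coefficients $d_r$ and the $g_r$, equivalently on the $d_r$ ($r\geqslant 1$), $t_{21}^{(1)}=g_1-d_1$, and the $g_r$ ($r\geqslant 2$); and the top $\operatorname{ad}(h)$-components of these generators, namely the $d_r$, $t_{21}^{(1)}$, and $t_{11}^{(r)}+t_{22}^{(r)}$ ($r\geqslant 2$), are again algebraically independent (this is the independence statement in the paper's proof, and is what Proposition \ref{limalg} rests on). Because $\yt$ has no zero divisors, the symbol map is multiplicative, so the symbol of a monomial $M$ in the generators is the corresponding monomial $\sigma(M)$ in these top components, of $\operatorname{ad}(h)$-degree $-2\beta(M)$ where $\beta(M)$ is the exponent of $t_{21}^{(1)}$. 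For $b=\sum_M c_M M\in B^+$, the contributions to the top degree come only from the monomials with $c_M\neq 0$ minimizing $\beta(M)$, and by algebraic independence the elements $\sigma(M)$ are linearly independent, so this top component is nonzero and lies in $L$. This ``no cancellation of leading terms'' argument is what pins down $\operatorname{gr}B^+\subseteq L$ (and, incidentally, proves the Poincar\'e-series equality you wanted); without it, or some substitute such as maximality of the limit algebra, your proof does not close.
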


\begin{proof}
The algebra $B(\left (\begin{smallmatrix} 1 & 1 \\ 0 & 1 \end{smallmatrix}\right ))$ is generated by the center of $\yt$ (which lies in degree $0$ of $\yt$) and the elements $t_{11}^{(r)}+t_{21}^{(r)}+t_{22}^{(r)}$. The degree $0$ component of $t_{11}^{(r)}+t_{21}^{(r)}+t_{22}^{(r)}$ is $t_{11}^{(r)}+t_{22}^{(r)}$. For $r\geqslant 2$ these elements are algebraically independent and with some set of algebraically independent elements generating the center they comprise a set of independent generators of the algebra $B(\left (\begin{smallmatrix} 1 & 0 \\ 0 & 1 \end{smallmatrix}\right ))$. On the other hand, $t_{11}^{(1)}+t_{22}^{(1)}$ lies in the center, so in the associated graded we will also have $t_{21}^{(1)}$.
\end{proof}

Let $(V,\pi)$ be a finite dimensional representation of $\yt$ such that $B(\left (\begin{smallmatrix} 1 & 1 \\ 0 & 1 \end{smallmatrix}\right ))$ acts on it with a cyclic vector. It is graded by the action of $h$ and this grading agrees with the grading on $B(\left (\begin{smallmatrix} 1 & 0 \\ 0 & 1 \end{smallmatrix}\right ))$ and the filtration on $B(\left (\begin{smallmatrix} 1 & 1 \\ 0 & 1 \end{smallmatrix}\right ))$. The representation can be decomposed into irreducible representations with respect to the $\sll$ action: $V=\bigoplus_{\lambda} V_{\lambda}\otimes W_{\lambda}$ where $V_{\lambda}$ is an irreducible representation of $\sll$ with highest weight $\lambda$ and $W_{\lambda}$ is the multiplicity space. This decomposition can be obtained using the Casimir element $\omega\in B(\left (\begin{smallmatrix} 1 & 0 \\ 0 & 1 \end{smallmatrix}\right ))$, $\omega=ef+fe+\frac{h^2}{2}$. Since $B(\left (\begin{smallmatrix} 1 & 0 \\ 0 & 1 \end{smallmatrix}\right ))$ commutes with the $\sll$, it preserves the summands and acts only on $W_{\lambda}$ in $V_{\lambda}\otimes W_{\lambda}$ for any $\lambda$.

\begin{lemma}
\label{componentcyclvect}
For each $\lambda$ in the decomposition above, $B(\left (\begin{smallmatrix} 1 & 0 \\ 0 & 1 \end{smallmatrix}\right ))$ has a cyclic vector in $W_{\lambda}$.
\end{lemma}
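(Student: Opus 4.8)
Write $B_0=B(\left(\begin{smallmatrix}1&0\\0&1\end{smallmatrix}\right))$ and $B_1=B(\left(\begin{smallmatrix}1&1\\0&1\end{smallmatrix}\right))$, and set $e=t_{12}^{(1)}$, $f=t_{21}^{(1)}$, $h=t_{11}^{(1)}-t_{22}^{(1)}$. The plan is to reduce the statement to the cyclicity of the limit algebra $G:=\operatorname{gr}B_1=\langle B_0,f\rangle$ (the previous lemma) and then to transfer cyclicity from $B_1$ to $G$. I would first record the structural facts to be used: $f$ is homogeneous of degree $-2$ and $B_0$ of degree $0$ for the $\operatorname{ad}h$-grading; the Casimir $\omega=ef+fe+\tfrac{h^2}{2}\in B_0$, so the isotypic projectors $P_\lambda$ (polynomials in $\omega$) also lie in $B_0$, are homogeneous of degree $0$, and commute with $e,f,h$. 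Finally $V=V^{hw}\oplus fV$ with $V^{hw}=\ker e=\bigoplus_\lambda(\text{h.w. line})\otimes W_\lambda$, so that $V/fV\cong\bigoplus_\lambda W_\lambda$ as $B_0$-modules and the quotient map is $B_0$-equivariant.

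The packaging step is the elementary observation that for any module $M$ over $G=\langle B_0,f\rangle$ with $[B_0,f]=0$, a $G$-cyclic vector maps to a $B_0$-cyclic vector of the coinvariants $M/fM$, since every monomial in the generators that contains $f$ vanishes modulo $fM$. Applying this to $M=V$, and using the projectors $P_\lambda\in B_0$ to split $V/fV\cong\bigoplus_\lambda W_\lambda$ blockwise, reduces the lemma to the single assertion that $G$ acts cyclically on $V$; conversely $G$ preserves each $V_\lambda\otimes W_\lambda$, so this assertion is in fact \emph{equivalent} to the conclusion. Thus the real content is to pass from the cyclicity of $B_1$ to that of its associated graded $G$.

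This transfer is the step I expect to be the main obstacle. The natural device is to filter $V$ by $\Phi_pV=(F_pB_1)\cdot v$, where $v$ is the $B_1$-cyclic vector; then the symbol of $v$ tautologically generates $\operatorname{gr}_\Phi V$ over $G=\operatorname{gr}B_1$. The difficulty is that one must still identify $\operatorname{gr}_\Phi V$, as a $G$-module, with $V$ equipped with its genuine $\sll$-structure --- concretely, one must ensure that passing to the associated graded does not lower the rank of $f$, so that the $f$-coinvariants retain dimension $\sum_\lambda\dim W_\lambda$ and decompose correctly under the $P_\lambda$. This is exactly the point where commutativity of $B_1$ is indispensable: the purely formal implication ``cyclic $\Rightarrow$ graded-cyclic'' is false for a general filtered algebra (one can arrange the symbol of a cyclic generator not to generate), so one cannot argue by abstract nonsense alone. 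I would therefore work entirely inside the $\operatorname{ad}h$-grading, where $B_0$, $f$ and the $P_\lambda$ act by honest homogeneous operators and $\omega$ splits $V$ into generalized eigenspaces, and verify the rank equality for $f$ directly on each $P_\lambda$-block, using that the image of $B_1$ in $\operatorname{End}(V)$ is cyclic, hence self-commutant (maximal commutative), to control the leading terms of its generators.
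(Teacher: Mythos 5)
Your opening reduction is correct: since $[B_0,f]=0$, any monomial containing $f$ dies in the coinvariants, $V/fV\cong\bigoplus_\lambda W_\lambda$ as $B_0$-modules, and the projectors $P_\lambda\in B_0$ split this blockwise; so $G$-cyclicity of $V$, with $G=\langle B_0,f\rangle=\operatorname{gr}B_1$, implies the lemma, and conversely. But precisely because it is an equivalence, this step carries no content toward a proof: the statement ``$G$ acts cyclically on $V$'' is (a special case of) Proposition \ref{cyclvectlimit}, which the paper deduces \emph{from} this lemma, not the other way around. Everything therefore rests on your transfer step from $B_1$-cyclicity to $G$-cyclicity, and that is exactly the step you do not carry out. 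Filtering by $\Phi_pV=(F_pB_1)\cdot v$ does make $\operatorname{gr}_\Phi V$ a cyclic $\operatorname{gr}B_1$-module, but, as you yourself note, nothing identifies $\operatorname{gr}_\Phi V$ with $V$ as a $G$-module; and your proposed repair --- ``verify the rank equality for $f$ on each $P_\lambda$-block, using that the image of $B_1$ is maximal commutative to control leading terms'' --- is a statement of intent, not an argument. (The maximal-commutativity fact is true for a commutative algebra acting with a cyclic vector, but you never say how it would control the symbols of the generators, and I do not see how it does.)

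The idea you are missing is how the paper localizes the problem so that the failure of ``cyclic $\Rightarrow$ graded-cyclic'' never arises: it only ever uses the \emph{top} filtration level, where the negative-degree part of $\operatorname{gr}B_1$ (i.e.\ $f$) cannot act. Concretely, write $B_0=B\bigl(\begin{smallmatrix}1&0\\0&1\end{smallmatrix}\bigr)$, $B_1=B\bigl(\begin{smallmatrix}1&1\\0&1\end{smallmatrix}\bigr)$, lift $\omega\in B_0$ to $\tilde\omega\in B_1$, and take the Jordan decomposition of $\pi(\tilde\omega)$: in a basis compatible with the $h$-grading, $\pi(\tilde\omega)$ is block lower triangular with diagonal part $\pi(\omega)$, so its semisimple part and the spectral projectors $\tilde P_\lambda$ onto the generalized eigenspaces $U_\lambda$ are polynomials in $\pi(\tilde\omega)$; hence they lie in the image of $B_1$, the $U_\lambda$ are $B_1$-stable, $\operatorname{gr}\tilde P_\lambda=P_\lambda$, and (comparing characteristic polynomials of $\pi(\tilde\omega)$ and $\pi(\omega)$) $\operatorname{gr}U_\lambda=V_\lambda\otimes W_\lambda$; in particular every $h$-weight in $U_\lambda$ is $\leqslant\lambda$. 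Applying $\tilde P_\lambda$ to $B_1v=V$ shows $\tilde P_\lambda(v)$ is $B_1$-cyclic in $U_\lambda$. Now decompose any $b\in B_1$ into $h$-homogeneous components $b=\sum_{j\geqslant 0}b^{(-2j)}$; then the weight-$\lambda$ component of $b\cdot\tilde P_\lambda(v)$ is $b^{(0)}\cdot(\tilde P_\lambda v)^{(\lambda)}$, and by the lemma computing $\operatorname{gr}B_1$ the map $b\mapsto b^{(0)}$ sends $B_1$ onto $(\operatorname{gr}B_1)_0=B_0$. Since the weight-$\lambda$ components of elements of $U_\lambda$ exhaust $(\operatorname{gr}U_\lambda)^{(\lambda)}=v_\lambda\otimes W_\lambda$, this exhibits $v_\lambda\otimes W_\lambda=B_0\cdot(\tilde P_\lambda v)^{(\lambda)}$, i.e.\ a $B_0$-cyclic vector in $W_\lambda$. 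This Jordan-decomposition-plus-top-degree mechanism is the concrete content absent from your proposal; without it, or a worked-out substitute for your rank claim, what you have is a correct repackaging of the statement followed by a gap at its core.
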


\begin{proof}
Let $\tilde{\omega}$ be a lifting of $\omega$ to $B(\left (\begin{smallmatrix} 1 & 1 \\ 0 & 1 \end{smallmatrix}\right ))$. We have the Jordan decomposition $\pi(\tilde{\omega})=\pi(\tilde{\omega})_s+\pi(\tilde{\omega})_n$ into semisimple and nilponent parts. The representation $V$ is graded as a representation of $\yt$, so we can pick a basis in $V$ compatible with the grading. Then $\pi(\tilde\omega)$ is a lower triangular matrix in this basis. Then $\pi(\tilde{\omega})_n$ is a strictly lower triangular matrix. 
Since the difference $\pi(\tilde{\omega})-\pi(\tilde{\omega}_s)$ is strictly lower triangular in the chosen basis, $\pi(\omega)=\operatorname{gr}\pi(\tilde{\omega})=\operatorname{gr}\pi(\tilde{\omega})_s$. Since $\pi(\tilde{\omega})_s$ and $\pi(\tilde{\omega})_n$ can be expressed polynomially in $\pi(\tilde{\omega})$, there are corresponding elements $\omega_s$ and $\omega_n$ in $B(\left (\begin{smallmatrix} 1 & 1 \\ 0 & 1 \end{smallmatrix}\right ))$ such that $\pi(\tilde\omega)_s=\pi(\tilde\omega_s)$, $\pi(\tilde\omega)_n=\pi(\tilde\omega_n)$.

We have a new decomposition $V=\bigoplus_{\lambda} U_{\lambda}$ into eigenspaces of $\pi(\tilde{\omega}_s)$. We have projectors $\tilde{P}_{\lambda}$ to $U_{\lambda}$ and $P_{\lambda}$ to $V_{\lambda}\otimes W_{\lambda}$. These projectors can be expressed polynomially in terms of $\pi(\tilde\omega_s)$ and $\pi(\omega)$ correspondingly and we can use the same polynomials for $\pi(\tilde\omega_s)$ and $\pi(\omega)$ for a specific $\lambda$. Therefore $\operatorname{gr}\tilde{P}_{\lambda}=P_{\lambda}$ (note that if we compute a value of a polynomial on a lower triangular matrix, then an entry on the diagonal can be computed by applying the polynomial to the corresponding diagonal entry of the initial matrix).

Let $v\in V$ be the cyclic vector for the action of $B(\left (\begin{smallmatrix} 1 & 1 \\ 0 & 1 \end{smallmatrix}\right ))$ on $V$. Then $\tilde P_{\lambda}(v)$ is cyclic in $U_{\lambda}$ for the action of $B(\left (\begin{smallmatrix} 1 & 1 \\ 0 & 1 \end{smallmatrix}\right ))$. Therefore $B(\left (\begin{smallmatrix} 1 & 0 \\ 0 & 1 \end{smallmatrix}\right ))=(\operatorname{gr} B(\left (\begin{smallmatrix} 1 & 0 \\ 0 & 1 \end{smallmatrix}\right )))_0$ generates $v_{\lambda}\otimes W_{\lambda}=(V_{\lambda}\otimes W_{\lambda})^{\lambda}=U_{\lambda}^{\lambda}/U_{\lambda}^{\lambda-1}$ from $P_{\lambda}(v)$.

\end{proof}

\begin{proposition}
\label{cyclvectlimit}
Let $B$ be some algebra in the limit of the family $\mathcal B$ and $V$ as above. Then there is a cyclic vector in $V$ for the action of $B$.
\end{proposition}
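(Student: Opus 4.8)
The plan is to follow the strategy of \cite{HKRW}, exploiting the $\sll$-isotypic decomposition $V=\bigoplus_\lambda V_\lambda\otimes W_\lambda$ together with the two lemmas just proved. By Proposition \ref{limalg} every limit algebra $B$ is generated by $B(\left(\begin{smallmatrix} 1 & 0 \\ 0 & 1 \end{smallmatrix}\right))$ and a single nonzero element $t$ of the copy of $\sll$ sitting inside $\yt$. Since $B(\left(\begin{smallmatrix} 1 & 0 \\ 0 & 1 \end{smallmatrix}\right))$ commutes with this $\sll$, on each summand it acts only through the multiplicity space, as $\operatorname{id}_{V_\lambda}\otimes\beta_\lambda$, while $t$ acts as $\rho_\lambda(t)\otimes\operatorname{id}_{W_\lambda}$, where $\rho_\lambda$ denotes the $\sll$-action on the irreducible $V_\lambda$.

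First I would record that $B(\left(\begin{smallmatrix} 1 & 0 \\ 0 & 1 \end{smallmatrix}\right))$ has a cyclic vector $w_\lambda\in W_\lambda$ for each $\lambda$; this is precisely Lemma \ref{componentcyclvect}, and it does not depend on the choice of $t$. Next I would show that $\rho_\lambda(t)$ is a cyclic (non-derogatory) operator on $V_\lambda$, i.e. there is $v_\lambda\in V_\lambda$ with $V_\lambda=\mathbb C[\rho_\lambda(t)]\,v_\lambda$. Granting both facts, on a single summand the vector $v_\lambda\otimes w_\lambda$ is cyclic for $B$: acting by $t^k$ and by $b\in B(\left(\begin{smallmatrix} 1 & 0 \\ 0 & 1 \end{smallmatrix}\right))$ produces the vectors $\rho_\lambda(t)^k v_\lambda\otimes\beta_\lambda(b)w_\lambda$, and these already span $V_\lambda\otimes W_\lambda$ since the first tensor factors span $V_\lambda$ and the second span $W_\lambda$.

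To pass from the individual summands to all of $V$ I would use the Casimir $\omega=ef+fe+\tfrac{h^2}{2}\in B(\left(\begin{smallmatrix} 1 & 0 \\ 0 & 1 \end{smallmatrix}\right))\subset B$, which acts on $V_\lambda\otimes W_\lambda$ by the scalar $\tfrac12\lambda(\lambda+2)$. These scalars are pairwise distinct, so the projectors $P_\lambda$ onto the summands are polynomials in $\omega$ and therefore lie in $B$. Setting $v=\sum_\lambda v_\lambda\otimes w_\lambda$, we get $P_\lambda v=v_\lambda\otimes w_\lambda\in Bv$ for every $\lambda$, and then $B(v_\lambda\otimes w_\lambda)=V_\lambda\otimes W_\lambda$ by the previous step; summing over $\lambda$ yields $Bv=V$, so $v$ is cyclic.

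The main obstacle is the second step, namely cyclicity of $\rho_\lambda(t)$ for an \emph{arbitrary} nonzero $t\in\sll$, rather than only for $t=t_{21}^{(1)}$ (the case effectively handled through $B(\left(\begin{smallmatrix} 1 & 1 \\ 0 & 1 \end{smallmatrix}\right))$ in Lemma \ref{componentcyclvect}). Here I would invoke that every nonzero element of $\sll$ is regular: up to conjugation inside $\operatorname{End}(V_\lambda)$ by an element of the group, $\rho_\lambda(t)$ is therefore either $\rho_\lambda(e)$, a single nilpotent Jordan block of size $\dim V_\lambda$, or $\rho_\lambda(ch)$ with the $\lambda+1$ distinct eigenvalues $c\lambda,c(\lambda-2),\dots,-c\lambda$. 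In both cases the minimal polynomial of $\rho_\lambda(t)$ has degree $\dim V_\lambda$, so $\rho_\lambda(t)$ admits a cyclic vector, and cyclicity is preserved under conjugation.
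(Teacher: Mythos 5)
Your proof is correct and follows essentially the same route as the paper: the $\sll$-isotypic decomposition $V=\bigoplus_\lambda V_\lambda\otimes W_\lambda$, Lemma \ref{componentcyclvect} for cyclicity on the multiplicity spaces, cyclicity of any nonzero $t\in\sll$ on each $V_\lambda$, and the Casimir's distinct eigenvalues to glue the summand-wise cyclic vectors into a single one. The only difference is that you spell out details the paper leaves implicit (regularity of nonzero elements of $\sll$, the spanning argument on $V_\lambda\otimes W_\lambda$, and the projectors as polynomials in $\omega$), which is a faithful elaboration rather than a different approach.
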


\begin{proof}
Algebra $B$ is generated by $B(\left (\begin{smallmatrix} 1 & 0 \\ 0 & 1 \end{smallmatrix}\right ))$ and an element of $\sll$ we discussed above. Then by lemma \ref{componentcyclvect} in each $W_\lambda\otimes V_{\lambda}$ for each $\lambda$ we have a cyclic vector with respect to $B$ since any element of $\sll$ has a cyclic vector in $V_{\lambda}$ for any $\lambda$.
The sum of these vectors will be cyclic with respect to $B$ in $V$ since the Casimir element $\omega$ acts on each $V_{\lambda}$ by a different constant.
\end{proof}

Combining proposition \ref{cyclvectnonlimit} and \ref{cyclvectlimit} we obtain the following result (Theorem \ref{introcyclvec}).

\begin{theorem}
\label{cyclvect}
The representation $\Lab$ of $\yt$ such that, for any $1\leqslant i<j\leqslant n$, $S(a_i,b_i)\cup S(a_j,b_j)$ is not a string, has a cyclic vector for the action of $\mathcal B(x)$ for any $x\in Z$.
\end{theorem}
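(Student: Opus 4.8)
The plan is to deduce the theorem by splitting the parameter space $Z$ into two pieces and applying one of the two preceding propositions to each. Recall from Proposition \ref{limalg} that $Z$ is the blow-up of $\mathbb{CP}^3$ at $\left[\left(\begin{smallmatrix}1&0\\0&1\end{smallmatrix}\right)\right]$, and that $\mathcal B(x)$ has two shapes: if $x$ projects to a point $[C]\neq\left[\left(\begin{smallmatrix}1&0\\0&1\end{smallmatrix}\right)\right]$ of $\mathbb{CP}^3$, then $\mathcal B(x)=B(C)$ with $C$ nonscalar; and if $x$ lies on the exceptional divisor, then $\mathcal B(x)$ is a limit algebra generated by $B(\left(\begin{smallmatrix}1&0\\0&1\end{smallmatrix}\right))$ together with an element of $\sll\subset\yt$. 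I would treat these two loci in turn.

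For $x$ off the exceptional divisor, the matrix $C$ is nonscalar and $\mathcal B(x)=B(C)$, so Proposition \ref{cyclvectnonlimit} applies verbatim under the string hypothesis on $(\underline a,\underline b)$ and supplies a cyclic vector for $\mathcal B(x)$ in $\Lab$. This settles all of $Z$ except the exceptional divisor.

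For $x$ on the exceptional divisor, $\mathcal B(x)$ is a limit algebra, and I would invoke Proposition \ref{cyclvectlimit}. The one point that needs checking is the standing hypothesis of that proposition, namely that $B(\left(\begin{smallmatrix}1&1\\0&1\end{smallmatrix}\right))$ acts on $\Lab$ with a cyclic vector. This is immediate: the matrix $\left(\begin{smallmatrix}1&1\\0&1\end{smallmatrix}\right)$ is nonscalar, so Proposition \ref{cyclvectnonlimit} again yields such a cyclic vector under the same string hypothesis. With this verified, Proposition \ref{cyclvectlimit} produces a cyclic vector for the limit algebra $\mathcal B(x)$ in $\Lab$, and the two cases together exhaust $Z$.

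I do not expect a genuine obstacle in this combining step: all the real work sits in Propositions \ref{cyclvectnonlimit} and \ref{cyclvectlimit}, and through them in the Shapovalov determinant of Proposition \ref{shapdetyang} and the $\sll$-isotypic decomposition of Lemmas \ref{singularvect}--\ref{componentcyclvect}. The only care required is to match hypotheses exactly, in particular to observe that the auxiliary matrix $\left(\begin{smallmatrix}1&1\\0&1\end{smallmatrix}\right)$ underlying Proposition \ref{cyclvectlimit} is itself nonscalar, so that the single string condition on $(\underline a,\underline b)$ feeds both propositions.
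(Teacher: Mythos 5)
Your proposal is correct and matches the paper's own argument, which likewise obtains the theorem by combining Proposition \ref{cyclvectnonlimit} (for $x$ off the exceptional divisor, where $\mathcal B(x)=B(C)$ with $C$ nonscalar) with Proposition \ref{cyclvectlimit} (for the limit algebras on the exceptional divisor). Your explicit check that the hypothesis of Proposition \ref{cyclvectlimit} — a cyclic vector for $B\left(\begin{smallmatrix}1&1\\0&1\end{smallmatrix}\right)$ on $\Lab$ — follows from Proposition \ref{cyclvectnonlimit} applied to the nonscalar matrix $\left(\begin{smallmatrix}1&1\\0&1\end{smallmatrix}\right)$ is exactly the point the paper leaves implicit.
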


\section{Unitarity of representations}
In this section we would like to discuss under which conditions $\Lab$ is unitary as a representation of $\yt$. We will consider representations $\Lab$ with $\underline a,\underline b\in\mathbb R^n$. The form that we discuss here is also discussed in Appendix C of \cite{MTV1}.

\begin{definition}
A representation $V$ of $\yt$ is called unitary if there is a positive definite
Hermitian form $\left<\cdot,\cdot\right>$ on $V$ such that for any $v,w\in V$
$\left<t_{ij}(u)v,w\right>=\left<v,t_{ji}(u)w\right>$.
\end{definition}

This is a generalization of the notion of unitarity for $\gll$.

\begin{lemma}
For $a,b\in\mathbb R$ representations $L(a,b)$ have a unitary structure.
\end{lemma}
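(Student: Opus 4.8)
The plan is to reduce the Yangian unitarity condition to a statement about $\gll$. Since $L(a,b)$ is an evaluation representation, $t_{ij}(u)$ acts as $\delta_{ij}+E_{ij}u^{-1}$, so that $t_{ij}^{(0)}=\delta_{ij}$, $t_{ij}^{(1)}=E_{ij}$ and $t_{ij}^{(r)}=0$ for $r\geqslant 2$. Comparing coefficients of $u^{-r}$ in the required identity $\left<t_{ij}(u)v,w\right>=\left<v,t_{ji}(u)w\right>$, the coefficient of $u^{0}$ is trivial and the only substantive relation is the coefficient of $u^{-1}$, namely $\left<E_{ij}v,w\right>=\left<v,E_{ji}w\right>$ for all $i,j$. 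Thus it suffices to produce a positive definite Hermitian form on the finite-dimensional irreducible $\gll$-module $L(a,b)$ that is contravariant with respect to the conjugate-transpose anti-involution $E_{ij}\mapsto E_{ji}$ (equivalently $e^{*}=f$, $f^{*}=e$, $h^{*}=h$ and $(E_{11}+E_{22})^{*}=E_{11}+E_{22}$, where $e=E_{12}$, $f=E_{21}$, $h=E_{11}-E_{22}$).

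To build this form I would first restrict to $\sll=\langle e,f,h\rangle\subset\gll$. Because $a-b\in\mathbb Z_{\geqslant 0}$, the module $L(a,b)$ restricts to the $(a-b+1)$-dimensional irreducible $\sll$-module, which integrates to the compact group $SU(2)$. Averaging any Hermitian inner product over $SU(2)$ yields a positive definite invariant form for which every element of the compact real form $\mathfrak{su}(2)$ acts skew-adjointly; since both the adjoint $*$ with respect to this form and the conjugate transpose $\dagger$ are conjugate-linear anti-involutions that agree on $\mathfrak{su}(2)$, they agree on its complexification $\sll$, giving $e^{*}=f$, $f^{*}=e$, $h^{*}=h$. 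It remains to treat the central element $z=E_{11}+E_{22}$, which acts by the scalar $a+b$; here $z^{*}=z$ holds precisely because $a+b$ is real. Combining the two, $E_{ij}^{*}=E_{ji}$ for all $i,j$, which is what we need. This is the step where the hypothesis $a,b\in\mathbb R$ is essential.

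Positivity can also be checked directly, which I find illuminating because it isolates the roles of reality and integrality. Since $h^{*}=h$ has real eigenvalues $a-b-2k$, distinct weight vectors are orthogonal, so the form is diagonal in the basis $v^{k}=f^{k}v_{0}$. Using $e=f^{*}$, $e v_{0}=0$ and the identity $[e,f^{k}]=kf^{k-1}(h-k+1)$ in $\sll$, one obtains the recursion $\left<v^{k},v^{k}\right>=k(a-b-k+1)\left<v^{k-1},v^{k-1}\right>$. For $1\leqslant k\leqslant a-b$ the factor $k(a-b-k+1)$ is a positive integer, so starting from $\left<v_{0},v_{0}\right>=1$ all norms are positive and the form is positive definite. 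The main obstacle is exactly this positivity: reality of $a+b$ makes $z$ self-adjoint, reality of the weights makes $h$ self-adjoint with orthogonal weight spaces, and integrality $a-b\in\mathbb Z_{\geqslant 0}$ both bounds the string and keeps every recursion coefficient positive; the reduction to $\gll$ and the existence of the contravariant form are routine.
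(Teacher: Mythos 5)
Your proof is correct and follows essentially the same route as the paper: both reduce the Yangian unitarity condition, via the evaluation action $t_{ij}(u)=\delta_{ij}+E_{ij}u^{-1}$, to the existence of a positive definite Hermitian form on $L(a,b)$ that is contravariant for $\gll$, i.e.\ satisfies $\left<E_{ij}v,w\right>=\left<v,E_{ji}w\right>$. The only difference is that the paper simply quotes this $\gll$-contravariant form as known, whereas you also construct it (compact real form argument plus the norm recursion $\left<v^{k},v^{k}\right>=k(a-b-k+1)\left<v^{k-1},v^{k-1}\right>$), which is standard but correct supplementary detail.
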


\begin{proof}
If we consider $L(a,b)$ as a representation of $\gll$, there is a form $\left<\cdot,\cdot\right>$ on $L(a,b)$ such that for $E_{ij}\in\gll$ and any $v,w\in L(a,b)$ we have $\left<E_{ij}v,w\right>=\left<v,E_{ji}w\right>$. The action of $\yt$ on $L(a,b)$ is defined by $t_{ij}(u)=\delta_{ij}+u^{-1}E_{ij}$, hence $\left<t_{ij}(u)v,w\right>=\left<v,t_{ji}(u)w\right>$ holds for this form.
\end{proof}

Suppose $V$ is a representation of $\yt$. Then the action of $\yt$ in the representation is defined by the image of $T(u)=(t_{ij}(u))_{1\leqslant i,j\leqslant 2}$ in $\mathrm{End}(V)\otimes Mat_2(u)$. We want to define an action of $\yt$ on the dual space of $V$. This representation we will denote $V^{\vee}$. To define the action on $V^{\vee}$ we will use the trasposition anti-automorphism $\tau\colon T(u)\mapsto T^t(u)$ of $\yt$. Note that it is not the antipode for the comultiplication $\Delta$!

Since $\Delta\circ\tau=\tau\otimes\tau\circ\Delta^{opp}$, $(V\otimes W)^{\vee}$ is naturally isomorphic to $W^{\vee}\otimes V^{\vee}$.

Let $\varphi\colon V\to V^{\vee}$ be a map to the dual space. Denote the adjoint map $\varphi^*\colon V=V^{\vee\vee}\to V^{\vee}$.

\begin{lemma}
A Hermitian form on $V$ such that $V$ is unitary with respect to this form is equivalent to an antilinear isomorphism $\varphi\colon V\to V^{\vee}$ and $\varphi=\varphi^*$.
\end{lemma}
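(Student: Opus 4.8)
The plan is to set up an explicit dictionary between sesquilinear forms on $V$ and antilinear maps $V\to V^{\vee}$, and then to match each clause in the definition of unitarity with a property of the corresponding map. Fix the convention that $\langle\cdot,\cdot\rangle$ is linear in the first argument and antilinear in the second. First I would attach to a form the map $\varphi\colon V\to V^{\vee}$ defined by $\varphi(w)(v)=\langle v,w\rangle$: antilinearity in the second slot makes $\varphi$ antilinear, while linearity in the first slot guarantees $\varphi(w)\in V^{\vee}$. The same formula recovers the form from an antilinear $\varphi$, so the assignment is a bijection. Since $V$ is finite dimensional and $\dim V=\dim V^{\vee}$, nondegeneracy of the form is equivalent to injectivity of $\varphi$, hence to $\varphi$ being an isomorphism of vector spaces.

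Next I would identify Hermitian symmetry with the condition $\varphi=\varphi^{*}$. Using the evaluation pairing together with the canonical identification $V=V^{\vee\vee}$, the adjoint of the antilinear map $\varphi$ is characterized by $\varphi^{*}(w)(v)=\overline{\varphi(v)(w)}$. Substituting the form gives $\varphi^{*}(w)(v)=\overline{\langle w,v\rangle}$, so the equality $\varphi=\varphi^{*}$ reads $\langle v,w\rangle=\overline{\langle w,v\rangle}$, which is exactly the Hermitian property. This step is a direct computation.

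The substantive point is to match the covariance condition $\langle t_{ij}(u)v,w\rangle=\langle v,t_{ji}(u)w\rangle$ with $\varphi$ being a homomorphism of $\yt$-modules. I would read the condition coefficientwise in $u^{-1}$, so that it becomes $\langle xv,w\rangle=\langle v,\tau(x)w\rangle$ for each generator $x$, where $\tau$ is the transposition anti-automorphism used to define $V^{\vee}$; since $\tau$ is an anti-automorphism and the form is biadditive, the identity then propagates to all $x\in\yt$. Recalling that the action on $V^{\vee}$ is $(x\cdot\xi)(v)=\xi(\tau(x)v)$, I would compute
$$(x\cdot\varphi(w))(v)=\varphi(w)(\tau(x)v)=\langle\tau(x)v,w\rangle,\qquad \varphi(xw)(v)=\langle v,xw\rangle,$$
so that, after replacing $x$ by $\tau(x)$ and using $\tau^{2}=\mathrm{id}$, the covariance condition is equivalent to $\varphi(xw)=x\cdot\varphi(w)$ for all $x$, i.e. to $\varphi$ intertwining the two $\yt$-actions. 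Combining the three steps, a $\yt$-invariant Hermitian form is the same datum as a self-adjoint antilinear isomorphism of $\yt$-modules $\varphi\colon V\to V^{\vee}$, and both directions of the equivalence follow since the dictionary is a bijection respecting each property.

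The main obstacle I anticipate is the careful bookkeeping of complex conjugation against the anti-automorphism $\tau$: because $\varphi$ is antilinear, the intertwining identity $\varphi(xw)=x\cdot\varphi(w)$ is compatible with scalar multiplication only for \emph{real} scalars, so one must phrase it as a homomorphism over the real span of the generators $t_{ij}^{(r)}$ and use that the defining relations have real coefficients to extend it over all of $\yt$. One must also keep the spectral parameter out of the way by arguing coefficientwise in $u^{-1}$, and verify that $\tau$ is compatible with this real structure. Finally, positive-definiteness of the form is not encoded by the pair $(\varphi\text{ isomorphism},\ \varphi=\varphi^{*})$ alone; it is the separate positivity requirement contained in the definition of unitarity and is to be checked independently on the representations at hand.
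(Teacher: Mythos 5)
Your proof is correct and follows essentially the same route as the paper: the paper's entire proof is the one-line observation that the action on $V^{\vee}$ is defined via the transposition anti-automorphism $\tau$, which is exactly the pivot of your computation identifying the covariance condition $\left<t_{ij}(u)v,w\right>=\left<v,t_{ji}(u)w\right>$ with $\varphi$ intertwining the $\yt$-actions. Your additional care with the antilinearity (working over the real span of the generators) and your remark that positive definiteness is a separate condition not encoded by the pair $(\varphi$ isomorphism, $\varphi=\varphi^{*})$ are sound refinements of what the paper leaves implicit.
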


\begin{proof}
This is true since we define the action of $\yt$ on $V^{\vee}$ with the transposition autmorphism.
\end{proof}

If we have positive definite Hermitian forms on $V$ and $W$ such that $V$ and $W$ are unitary with respect to this form, we can construct a positive definite Hermitian form on $V\otimes W$ that makes this representation unitary. The product of forms on $V$ and $W$ gives us an isomorphism of representations $W\otimes V\to W^{\vee}\otimes V^{\vee}=(V\otimes W)^{\vee}$. If we precompose it with a self-adjoint representation homomorphism $V\otimes W\to W\otimes V$, we obtain a homomorphism $V\otimes W\to (V\otimes W)^{\vee}$ that defines a Hermitian form on $V\otimes W$ and makes the representation $V\otimes W$ unitary.

To make this form on $V\otimes W$ positive definite, we will need the homomorphism $V\otimes W\to W\otimes V\to (V\otimes W)^{\vee}$ to be positive definite as a homomorphism to the dual space.

For $\underline a=(a_1,\ldots,a_n)$ we denote $\underline a^{op}=(a_n,\ldots,a_1)$.

\begin{lemma}
The product of Hermitian forms coming from $\gll$ action on $L(a_i,b_i)$ gives an isomorphism of $\yt$ representations $\Lab^{\vee}\simeq\Labop$.
\end{lemma}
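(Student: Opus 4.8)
The plan is to assemble the isomorphism factor by factor from the unitary structures on the individual evaluation representations, using the order-reversing identification of the dual of a tensor product recorded above.

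First I would record that for each $i$ the real parameters $a_i,b_i$ make $L(a_i,b_i)$ unitary by the preceding lemma, with a positive definite Hermitian form $\langle\cdot,\cdot\rangle_i$. Setting $\varphi_i(v)=\langle\cdot,v\rangle_i$ gives an antilinear map $\varphi_i\colon L(a_i,b_i)\to L(a_i,b_i)^{\vee}$, which is bijective because the form is nondegenerate. The unitarity relation $\langle t_{jk}(u)w,v\rangle_i=\langle w,t_{kj}(u)v\rangle_i$ together with the definition of the $\yt$-action on the dual through the transposition antiautomorphism $\tau$ shows, coefficient by coefficient in $u^{-1}$ (the parameters being real, so that conjugation acts trivially on the structure constants), that $t_{jk}(u)\cdot\varphi_i(v)=\varphi_i(t_{jk}(u)v)$. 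Thus each $\varphi_i$ is an (antilinear) isomorphism of $\yt$-modules onto the dual factor.

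Second I would iterate the natural identification $(V\otimes W)^{\vee}\simeq W^{\vee}\otimes V^{\vee}$, which holds as an isomorphism of $\yt$-modules precisely because $\Delta\circ\tau=(\tau\otimes\tau)\circ\Delta^{opp}$, to obtain a $\yt$-isomorphism
\[
\Lab^{\vee}\simeq L(a_n,b_n)^{\vee}\otimes\cdots\otimes L(a_1,b_1)^{\vee}.
\]
The key point is that passing to the dual reverses the order of the tensor factors, so the right-hand side lists them in the order $n,\dots,1$, which is exactly the order of the factors of $\Labop$. Then, since a tensor product of $\yt$-intertwiners is again a $\yt$-intertwiner for the $\Delta$-action, the map $\varphi_n\otimes\cdots\otimes\varphi_1$ is an isomorphism of $\yt$-modules
\[
\Labop=L(a_n,b_n)\otimes\cdots\otimes L(a_1,b_1)\;\longrightarrow\;L(a_n,b_n)^{\vee}\otimes\cdots\otimes L(a_1,b_1)^{\vee}.
\]
Composing with the identification of the previous step produces the desired $\Labop\simeq\Lab^{\vee}$, and by construction this is precisely the product of the Hermitian forms on the factors.

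The step I expect to require the most care is the order-reversing identification of the dual: checking that it is genuinely a $\yt$-morphism, not merely a linear isomorphism, rests on the compatibility $\Delta\circ\tau=(\tau\otimes\tau)\circ\Delta^{opp}$ and on tracking how $\tau$ interacts with the reversal of factors as one iterates over all $n$ tensorands. Once the single-factor intertwining property of each $\varphi_i$ and this compatibility are in hand, the remaining verification that $\varphi_n\otimes\cdots\otimes\varphi_1$ intertwines the $\Delta$-action is the standard Hopf-algebra fact, and reality of the $a_i,b_i$ guarantees that antilinearity introduces no obstruction.
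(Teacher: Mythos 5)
Your proof is correct, and it follows the same basic skeleton as the paper's (reduce to single tensor factors, then use the order-reversing identification of the dual of a tensor product coming from $\Delta\circ\tau=(\tau\otimes\tau)\circ\Delta^{opp}$), but the two arguments differ in how the single-factor step is handled. The paper's proof is a one-liner of a more abstract flavor: duality with respect to the transposition anti-automorphism preserves the highest weight, so $L(a,b)^{\vee}\simeq L(a,b)$ as $\yt$-modules, and dualization reverses the order of the factors; this produces a linear isomorphism $\Lab^{\vee}\simeq\Labop$ without invoking the forms at all, but it leaves implicit why the particular map named in the statement (the product of the Hermitian forms) realizes this isomorphism. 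You instead build the isomorphism concretely: each form gives an antilinear intertwiner $\varphi_i\colon L(a_i,b_i)\to L(a_i,b_i)^{\vee}$ (which is exactly the content of the paper's two preceding lemmas, so this step is available to you), and the composite of $\varphi_n\otimes\cdots\otimes\varphi_1$ with the iterated identification $(V\otimes W)^{\vee}\simeq W^{\vee}\otimes V^{\vee}$ is the desired map. Your version is more faithful to the wording of the statement, at the cost of having to track antilinearity; you correctly note that this is harmless because the intertwining need only be checked on the generators $t_{ij}^{(r)}$, whose action and coproduct have real structure constants when $a_i,b_i\in\mathbb R$. The paper's version buys brevity and a genuinely $\mathbb C$-linear isomorphism; yours buys an explicit identification of the isomorphism with the product form, which is what is actually used afterwards to define the Hermitian form on $\Lab$.
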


\begin{proof}
The duality does not change the highest weight of the representation, hence $L(a,b)^{\vee}\simeq L(a,b)$ for the evaluation representation $L(a,b)$, and puts the tensor factors in the opposite order.
\end{proof}

It follows that to define a Hermitian form on $\Lab$, we need a homomorphism $\Lab\to\Labop$ which is positive definite and self-adjoint.

First we will consider the case where all $L(a_i,b_i)$ are two-dimensional, and then get to the general case using that every $L(a,b)$ can be realized as a subquotient of
$$L(a,a-1)\otimes L(a-1,a-2)\otimes\ldots\otimes L(b+1,b).$$

In the preliminaries section we have introduced the map 
$$\sigma^R(a_1,\ldots,a_n)\colon L(a_1,a_1-1)\otimes\ldots\otimes L(a_n,a_n-1)\to L(a_n,a_n-1)\otimes\ldots\otimes L(a_1,a_1-1)$$
 $$\sigma^R(a_1,\ldots,a_n)=\prod_{1\leqslant l\leqslant n-1}^{\leftarrow}\prod_{1\leqslant k\leqslant n-l}^{\leftarrow} \sigma^R_{k,k+1}(a_{l}-a_{l+k})$$

\begin{lemma}
\label{selfadj}
The map $\sigma^R(a_1,\ldots,a_n)$ is self-adjoint.
\end{lemma}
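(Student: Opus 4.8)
The plan is to strip the reordering (the $Flip$ operators) off the $R$-matrix part of $\sigma^R(a_1,\ldots,a_n)$ and thereby reduce the statement to the self-adjointness of a genuine operator on a single tensor product; that operator will then be shown to be self-adjoint using the Yang--Baxter equation. Throughout I take $a_1,\ldots,a_n\in\mathbb R$, as in this section, so that each factor $L(a_i,a_i-1)\cong\mathbb C^2$ carries the standard positive-definite $\gll$-invariant form and the tensor products $V:=L(a_1,a_1-1)\otimes\cdots\otimes L(a_n,a_n-1)$ and $V^{\mathrm{op}}:=L(a_n,a_n-1)\otimes\cdots\otimes L(a_1,a_1-1)$ carry the product forms, both denoted $\langle\cdot,\cdot\rangle$ (linear in the first argument).

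As recorded in the proof of Proposition \ref{rephomom}, moving all the $Flip$ operators to the left gives a factorization
\[
\sigma^R(a_1,\ldots,a_n)=F\circ\mathcal R,\qquad F=\prod_{1\leqslant l\leqslant n-1}^{\leftarrow}\prod_{1\leqslant k\leqslant n-l}^{\leftarrow}Flip_{k,k+1},\qquad \mathcal R=\prod_{1\leqslant l\leqslant n-1}^{\leftarrow}\prod_{1\leqslant k\leqslant n-l}^{\leftarrow}R_{l,l+k}(a_l-a_{l+k}),
\]
where $F\colon v_1\otimes\cdots\otimes v_n\mapsto v_n\otimes\cdots\otimes v_1$ is the total reordering isometry $V\to V^{\mathrm{op}}$ and $\mathcal R=\prod_{i<j}R_{ij}(a_i-a_j)$ is an operator on $V$ (each unordered pair occurring exactly once). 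Unwinding the identification $V^{\mathrm{op}}\simeq V^{\vee}$ furnished by the product forms, self-adjointness of $\sigma^R$ is the assertion that the sesquilinear form $h(x,y)=\langle\sigma^R x,Fy\rangle$ it induces on $V$ is Hermitian. Since $F$ is an isometry, $h(x,y)=\langle F\mathcal R x,Fy\rangle=\langle\mathcal R x,y\rangle$, and hence $h(x,y)=\overline{h(y,x)}$ for all $x,y$ amounts to $\langle\mathcal R x,y\rangle=\overline{\langle\mathcal R y,x\rangle}=\langle x,\mathcal R y\rangle$, i.e.\ to $\mathcal R$ being a self-adjoint operator on $V$. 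This is the statement I would actually prove.

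Next I would check that the building blocks are self-adjoint for real parameters. Writing the form on $(\mathbb C^2)^{\otimes n}$ as the product of standard forms gives $e_{ij}^{\dagger}=e_{ji}$, whence $P_{kl}^{\dagger}=P_{kl}$, and therefore $R_{kl}(u)=1-u^{-1}P_{kl}$ is self-adjoint for every real $u$; in particular each factor $R_{ij}(a_i-a_j)$ of $\mathcal R$ is self-adjoint, using $a_i\in\mathbb R$. Consequently $\mathcal R^{\dagger}$ is the product of exactly the same factors $R_{ij}(a_i-a_j)$ taken in the opposite order, and the whole problem collapses to showing that reversing the order does not change the product, $\mathcal R=\mathcal R^{\dagger}$.

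This last point is the heart of the matter and the step I expect to be the main obstacle; it is precisely where the Yang--Baxter equation enters. For $n=3$ it is immediate: with $u=a_1-a_2$ and $v=a_2-a_3$ one has $u+v=a_1-a_3$, and
\[
\mathcal R=R_{12}(a_1-a_2)R_{13}(a_1-a_3)R_{23}(a_2-a_3)=R_{23}(a_2-a_3)R_{13}(a_1-a_3)R_{12}(a_1-a_2)=\mathcal R^{\dagger}
\]
is exactly the Yang--Baxter equation $R_{12}(u)R_{13}(u+v)R_{23}(v)=R_{23}(v)R_{13}(u+v)R_{12}(u)$, the additive behaviour of the spectral parameters $a_i-a_j$ matching the $u,v,u+v$ pattern of the relation. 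For general $n$ I would argue that $\mathcal R$ and its reverse both realize the passage associated to the longest permutation $w_0$, built from two reduced words that are mirror images of one another; since any two reduced words for $w_0$ are connected by braid moves and far-commutations, and these are implemented on the $R$-matrices by the Yang--Baxter equation and by the commutativity of $R_{ij}$ and $R_{kl}$ for disjoint pairs, with the spectral parameter $a_i-a_j$ always attached to the pair $\{i,j\}$, the two products coincide. The only delicate part is bookkeeping which spectral parameter is carried by which swap as these moves are applied; everything else is formal, and the $n=3$ computation shows the mechanism by which the additive parameters conspire with the $u+v$ slot of the Yang--Baxter equation.
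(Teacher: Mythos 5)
Your proof is correct and takes essentially the same route as the paper: the paper likewise observes that the elementary factors are self-adjoint, that taking adjoints reverses the order of composition, and then identifies the original product with the reversed one by viewing both as products along reduced words for the longest permutation $w_0$, invoking Tits' theorem (\cite{BB}, Theorem 3.3.1) with the braid moves implemented by the Yang--Baxter equation. Your preliminary step of stripping off the $Flip$ operators to reduce to the operator $\mathcal R$ on a single tensor product is only a repackaging of the paper's argument, which works directly with the $\sigma^R_{ij}$'s and their braid relations.
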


\begin{proof}
The map $\sigma^R_{12}(a_1-a_2)\colon L(a_1,a_1-1)\otimes L(a_2,a_2-1)\to L(a_2,a_2-1)\otimes L(a_1,a_1-1)$ is self adjoint since it is a composition of a map with symmetric matrix and the map exchanging the tensor factors.

If we have a composition of two operators $\phi_1\circ\phi_2$, the adjoint map $(\phi_1\circ\phi_2)^*=\phi_2^*\circ\phi_1^*$.

Hence the adjoint map of $\sigma^R(a_1,\ldots,a_n)$ is a map $\Lab\to\Labop$ that is a product of $n(n-1)$ maps $\sigma^R_{ij}(u)$, and they realize the unique permutation of the length $n(n-1)$ on $n$ tensor factors, i.e., the permutation that puts the factors in the opposite order. Also we know that we have the braid group relations on $\sigma^R_{ij}$'s. From the Theorem 3.3.1 in \cite{BB} it follows that any two minimal presentations of an element of $\mathfrak S_n$ can be changed into each other using only braid group relations. Hence $\sigma^R(a_1,\ldots,a_n)=\sigma^R(a_1,\ldots,a_n)^*$.
\end{proof}

Therefore the composition of the maps $\sigma^R(a_1,\ldots,a_n)$ and the map defined by the product of Hermitian forms gives a Hermitian form on $L(a_1,a_1-1)\otimes\ldots\otimes L(a_n,a_n-1)$. In order for this form to be positive definite, we need the map $\sigma^R(a_1,\ldots,a_n)$ to be positive definite. The map $\sigma^R(a_1,\ldots,a_n)$ is self-adjoint hence its eigenvalues are real.

Since $\sigma^R_{ij}(u)=Flip_{ij}\circ R_{ij}(u)$ we can rewrite $$\sigma^R(a_1,\ldots,a_n)=\prod_{1\leqslant l\leqslant n-1}^{\leftarrow} \prod_{1\leqslant k\leqslant n-l}^{\leftarrow} \sigma^R_{k,k+1}(a_{l}-a_{l+k})=$$
$$=\prod_{1\leqslant l\leqslant n-1}^{\leftarrow}\prod_{1\leqslant k\leqslant n-l}^{\leftarrow} Flip_{k,k+1}\circ \prod_{1\leqslant l\leqslant n-1}^{\leftarrow}\prod_{1\leqslant k\leqslant n-l}^{\leftarrow} R_{l,l+k}(a_{l}-a_{l+k}).$$
Therefore it suffices to check that all eigenvalues of 
$$R(a_1,\ldots,a_n)=\prod_{1\leqslant l\leqslant n-1}^{\leftarrow}\prod_{1\leqslant k\leqslant n-l}^{\leftarrow} R_{l,l+k}(a_{l}-a_{l+k})$$
are positive.

\begin{lemma}
\label{poseigval}
The eigenvalues of
$$R(a_1,\ldots,a_n)\colon L(a_1,a_1-1)\otimes\ldots\otimes L(a_n,a_n-1)\to L(a_1,a_1-1)\otimes\ldots\otimes L(a_n,a_n-1)$$
are positive if $|a_i - a_j| > 1$ for any $i,j$.
\end{lemma}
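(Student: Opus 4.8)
The plan is to combine two observations about the product $R(a_1,\ldots,a_n)$: that under the hypothesis each elementary factor is positive definite, and that the full product is self-adjoint, so that its spectrum is a priori real. Positivity will then follow from a connectedness argument. I would first record the spectrum of a single factor: since $R_{kl}(u)=1-u^{-1}P_{kl}$ and $P_{kl}$ is a self-adjoint involution with eigenvalues $\pm 1$, the operator $R_{kl}(u)$ is self-adjoint for real $u$ with eigenvalues $1-u^{-1}$ and $1+u^{-1}$, both strictly positive exactly when $|u|>1$. Hence, when $|a_i-a_j|>1$ for all $i,j$, every factor $R_{l,l+k}(a_l-a_{l+k})$ occurring in $R(a_1,\ldots,a_n)$ is positive definite; in particular each factor is invertible, so $R(a_1,\ldots,a_n)$ is invertible and $0$ is never an eigenvalue on the region $\{|a_i-a_j|>1\}$.

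The key step is to show that $R(a_1,\ldots,a_n)$ is itself self-adjoint for real parameters, despite being a product of non-commuting self-adjoint operators. The adjoint of the product is the product of the same factors in the reversed order, so it suffices to show that reversing the order does not change the operator. I would argue this from the Yang-Baxter equation: the factors of $R(a_1,\ldots,a_n)$ form a product of $R$-matrices realizing the longest element $w_0\in\mathfrak S_n$ along a reduced word, where the crossing of the two strands carrying $a_p$ and $a_q$ is weighted by $a_p-a_q$. The reversed word is again a reduced word for $w_0=w_0^{-1}$ in which every pair of strands still crosses exactly once with the same weight, so by the braid relation for $R$-matrices already recorded in the preliminaries the two products agree. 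For $n=3$ this is literally the Yang-Baxter equation $R_{12}(u)R_{13}(u+v)R_{23}(v)=R_{23}(v)R_{13}(u+v)R_{12}(u)$, and the general case follows by iterating braid moves, each preserving the weights. Thus $R(a_1,\ldots,a_n)^*=R(a_1,\ldots,a_n)$ and all its eigenvalues are real.

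Finally I would deduce positivity by a deformation inside the parameter region. Each connected component of $\{|a_i-a_j|>1\}$ corresponds to a fixed ordering of the $a_i$ and is cut out by the linear inequalities $a_{i+1}-a_i>1$ (in that order), hence is convex and in particular connected. As the gaps tend to infinity every factor $R_{l,l+k}(a_l-a_{l+k})$ tends to the identity, so $R(a_1,\ldots,a_n)$ tends to the identity and its eigenvalues tend to $1$; therefore at a sufficiently spread-out reference point of the component the self-adjoint operator $R(a_1,\ldots,a_n)$ is positive definite. Moving along a path inside the component, each eigenvalue stays real by self-adjointness and never vanishes by invertibility, so, being a continuous real function that is positive at the reference point, it remains positive. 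Since every component is convex and contains such a spread-out point, the eigenvalues of $R(a_1,\ldots,a_n)$ are positive throughout $\{|a_i-a_j|>1\}$.

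The main obstacle is the self-adjointness step: identifying $R(a_1,\ldots,a_n)$ with the reduced-word $R$-matrix attached to $w_0$ and checking that reversing the word preserves the \emph{signed} spectral parameters, so that the Yang-Baxter equation genuinely forces $R^*=R$. Once the spectrum is known to be real, the remaining deformation argument is routine.
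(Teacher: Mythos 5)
Your proof is correct, and its overall strategy --- factorwise invertibility plus a deformation to the identity inside a connected parameter region --- is the same as the paper's. The differences are instructive. For connectedness the paper avoids your case analysis over orderings by treating the quantities $a_{ij}=(a_i-a_j)^{-1}$ as independent coordinates ranging over the polydisc $|a_{ij}|<1$, in which the matrix entries of $R(a_1,\ldots,a_n)$ are polynomial; you instead use convexity of each ordering component of $\{|a_i-a_j|>1\}\subset\mathbb R^n$, which works equally well. More importantly, the paper's proof ends with ``hence by continuity all eigenvalues are positive'' without ever establishing that the eigenvalues of $R(a_1,\ldots,a_n)$ stay real; as stated this is a gap, since for a general continuous family of invertible matrices a pair of complex-conjugate eigenvalues can travel from $+1$ to $-1$ while the determinant never vanishes. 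Your middle step --- the adjoint of $R(a_1,\ldots,a_n)$ is the reverse-ordered product of the same self-adjoint factors, the reverse of a reduced-word ordering of the crossings of $w_0$ is again such an ordering, and Matsumoto's theorem together with the Yang--Baxter equation (plus commutation of factors $R_{pq}$, $R_{rs}$ with disjoint index sets, which is also needed to connect a word with its reverse) identifies the two products --- supplies exactly the missing self-adjointness, and it is the same technique the paper itself uses for $\sigma^R(a_1,\ldots,a_n)$ in Lemma \ref{selfadj} via Theorem 3.3.1 of \cite{BB}, transported to the $R$-product. So your argument is not only valid but closes a hole in the printed proof. One pedantic remark: the Yang--Baxter relation you quote, $R_{12}(u)R_{13}(u+v)R_{23}(v)=R_{23}(v)R_{13}(u+v)R_{12}(u)$, is the correct one; the version displayed in the paper's preliminaries has the spectral parameters of the outer factors interchanged (apparently a typo), so it should not be cited verbatim.
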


\begin{proof}
Recall that $R_{kl}=1+(a_k-a_l)^{-1}P_{kl}$ where $P_{kl}$ permutes the k'th and l'th coordinates (as defined in preliminaries). All eigenvalues of $P_{kl}$ are $1$ or $-1$ so $R_{kl}$ has zero eigenvalues only if $|a_i - a_j| = 1$.
Now 
$$R(a_1,\ldots,a_n)=\prod_{1\leqslant l\leqslant n-1}^{\leftarrow}\prod_{1\leqslant k\leqslant n-l}^{\leftarrow} R_{l,l+k}(a_{l}-a_{l+k}).$$
Denote $a_{ij}=\frac{1}{a_i - a_j}$. The determinant $$\det(\prod_{1\leqslant l\leqslant n-1}^{\leftarrow}\prod_{1\leqslant k\leqslant n-l}^{\leftarrow} R_{l,l+k}(a_{l}-a_{l+k}))=\prod_{l=1}^{n-1}\prod_{k=1}^{n-l}\det R_{l,l+k}(a_l-a_{l+k})$$
is a polynomial in $a_{ij}$ whose only roots are $a_{ij}=1$ and $a_{ij}=-1$ and if all $a_{ij}=0$ then all eigenvalues are positive. Hence by continuity whenever all $|a_{ij}|<1$, all eigenvalues of $R(a_1,\ldots,a_n)$ are positive.
\end{proof}

Now we can do it in the general case. Let
$$N_<(a,b)=L(b+1,b)\otimes L(b+2,b+1)\otimes\ldots\otimes L(a,a-1)$$
and
$$N_>(a,b)=L(a,a-1)\otimes L(a-1,a-2)\otimes\ldots\otimes L(b+1,b).$$

Let us write an operator
$$\tau\colon N_<(a_1,b_1)\otimes\ldots\otimes N_<(a_n,b_n)\to N_>(a_n,b_n)\otimes\ldots\otimes N_>(a_1,b_1).$$
This operator will be a composition of a map
$$\tau_1\colon N_<(a_1,b_1)\otimes\ldots\otimes N_<(a_n,b_n)\to N_<(a_n,b_n)\otimes\ldots\otimes N_<(a_1,b_1)$$ and a map $$\tau_2\colon N_<(a_n,b_n)\otimes\ldots\otimes N_<(a_1,b_1)\to N_>(a_n,b_n)\otimes\ldots\otimes N_>(a_1,b_1).$$
\\
Denote:\\
$k_i=a_i-b_i$\\
$r_i=k_1+\ldots+k_{i-1}$\\
$q_i=k_{i+1}+\ldots+k_n$\\
$a(s)=a_j$ and $k(s)=s-r_j$ if $r_j \leqslant s < r_{j+1}$.

Define
$$\tau_{1,i}=\prod_{1\leqslant j\leqslant k_i}^{\leftarrow}\prod_{0\leqslant s\leqslant n-r_{i+1}-1}^{\leftarrow} \sigma_{t+s,t+s+1}(b_i+t-a(s+r_{i+1})+k(s+r_{i+1})).$$
Note that $\tau_{1,i}$ pulls $N_<(a_i,b_i)$ through $N_<(a_{i+1},b_{i+1})\otimes\ldots\otimes N_<(a_n,b_n)$.

Then $\tau_1=\prod\limits_{1\leqslant i\leqslant n-1}^{\leftarrow}\tau_{1,i}$.

Define $$\tau_{2,i}=\prod_{1\leqslant j\leqslant k_i-1}^{\leftarrow}\prod_{1\leqslant s\leqslant k_i-j}^{\leftarrow} \sigma_{s+q_i,s+q_i+1}(s).$$
Then $\tau_2=\prod\limits_{1\leqslant i\leqslant n}^{\leftarrow}\tau_{2,i}$.

Define $\tau=\tau_2\tau_1$.

\begin{theorem}
\label{hermform}
Suppose $a_i,b_i\in\mathbb R$ for $1\leqslant i\leqslant n$ and $a_1> b_1>\ldots>a_n>b_n$. Then we can define a Hermitian form on $\Lab$ such that the representation becomes unitary with this form.
\end{theorem}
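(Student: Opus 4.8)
The plan is to realize $\Lab$ as a subquotient of a tensor product of two-dimensional evaluation representations and to transport a form built there through the operator $\tau=\tau_2\tau_1$ already written down above. Concretely, I would set $N=N_<(a_1,b_1)\otimes\cdots\otimes N_<(a_n,b_n)$, a tensor product of the two-dimensional modules $L(c,c-1)$. By Proposition \ref{rephomom} the block symmetrizer $\sigma^R(b_i+1,\ldots,a_i)$ realizes $L(a_i,b_i)$, up to a twist by an automorphism (\ref{multaut}) which we ignore, as the image inside $N_>(a_i,b_i)$. Taking the tensor product of these block symmetrizers then exhibits $\Lab$ simultaneously as a quotient of $N$ and as a subquotient of $N'=N_>(a_n,b_n)\otimes\cdots\otimes N_>(a_1,b_1)$.

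Next I would observe that, as checked by comparing the two orderings, $N'$ is exactly the complete reversal of the list of two-dimensional factors of $N$, so $\tau=\tau_2\tau_1$ is precisely the map $\sigma^R$ attached to the longest permutation reversing all the two-dimensional factors. Hence $\tau$ is a homomorphism of $\yt$-modules, and the argument of Lemma \ref{selfadj} (the braid relations together with Theorem 3.3.1 of \cite{BB}) shows that $\tau$ is self-adjoint with respect to the product of the $\gll$ forms. Identifying $N'$ with $N^{\vee}$ through the product of $\gll$ forms, using $L(c,c-1)^{\vee}\simeq L(c,c-1)$ and that duality reverses the order of factors, turns $\tau$ into a self-adjoint $\yt$-homomorphism $N\to N^{\vee}$, i.e.\ a (possibly degenerate) Hermitian form on $N$. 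Because $\tau$ is a homomorphism this form is invariant in the sense $\langle t_{ij}(u)v,w\rangle=\langle v,t_{ji}(u)w\rangle$, and since the cross-block part of $\tau$ is invertible its radical is the kernel of the symmetrizer; the form therefore descends to $N/\mathrm{rad}\simeq\Lab$, producing a unitary structure once positivity is established.

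The hard part, and the real content beyond the machinery already in place, is positivity of the descended form. Here I would separate the two kinds of factors in $\tau$. The within-block factors, collected in $\tau_2$, carry the integer arguments $1,\ldots,k_i-1$ and are genuinely degenerate; but restricted to the image $L(a_i,b_i)$ they induce the classical contravariant form of the irreducible $\gll$-module $L(a_i,b_i)$, which is positive definite for real highest weight (this is the input behind the unitarity of $L(a,b)$ for $a,b\in\mathbb R$). The cross-block factors, collected in $\tau_1$, all carry arguments of absolute value strictly greater than $1$: for $i<j$ the smallest parameter $b_i+1$ of block $i$ exceeds the largest parameter $a_j$ of block $j$, so every such difference is at least $b_i+1-a_j>1$, using $a_1>b_1>\cdots>a_n>b_n$. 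Thus Lemma \ref{poseigval} applies to $\tau_1$ and shows it is positive definite.

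Combining these, the degenerate factors of $\tau$ contribute only the passage to the irreducible $\gll$-blocks, where the induced form is positive definite, while the non-degenerate cross-block braiding $\tau_1$ is positive definite by Lemma \ref{poseigval}; hence the composite form on $\Lab$ is positive definite and the representation is unitary. I expect the organization of this last step, namely arranging the composition so that every degenerate $\sigma^R$ factor is absorbed into the positive-definite $\gll$ forms on the individual strings and every remaining factor falls under the hypothesis $|a_i-a_j|>1$ of Lemma \ref{poseigval}, to be the main obstacle, the rest being formal consequences of Propositions \ref{rephomom} and Lemmas \ref{selfadj}--\ref{poseigval}.
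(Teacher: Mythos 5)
Your construction phase coincides with the paper's proof: the same operator $\tau=\tau_2\tau_1$, its identification with the full-reversal map $\sigma^R(a_1,a_1-1,\ldots,b_n+1)$, self-adjointness via the braid relations and Theorem 3.3.1 of \cite{BB} (Lemma \ref{selfadj}), and the descent of the form to $\Lab$ (the paper does this by showing $\tau$ factors as $N\twoheadrightarrow\Lab\xrightarrow{\tau'}\Labop\hookrightarrow N_>(a_n,b_n)\otimes\cdots\otimes N_>(a_1,b_1)$, with $\tau'$ an isomorphism by a dimension count). Where you genuinely diverge is positivity. The paper never factors $\tau$ into within-block and cross-block parts: it perturbs all $n'=\sum_i(a_i-b_i)$ spectral parameters simultaneously so that every pairwise difference has absolute value strictly greater than $1$, applies Lemma \ref{poseigval} to the full product, and concludes by continuity that the spectrum of $\tau$ is real and non-negative; strict positivity on $\Lab$ then comes for free from the invertibility of $\tau'$, with no analysis of the degenerate factors at all. (Your observation that cross-block differences satisfy $b_i+1-a_j>1$ is exactly what makes such a perturbation possible under the hypothesis $a_1>b_1>\cdots>a_n>b_n$, so it is implicitly present in the paper's argument too.)

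The gap is in your combining step, which you yourself flag as ``the main obstacle'': positivity of $\tau_1$ and of the blockwise forms induced by $\tau_2$ does not formally yield positivity of the form attached to the composite $\tau_2\tau_1$. Concretely: first, the intermediate space $N_<(a_n,b_n)\otimes\cdots\otimes N_<(a_1,b_1)$ is not the full reversal of either the source or the target, so under the product-form identification of duals neither $\tau_1$ nor $\tau_2$ separately is a map to a dual space; ``self-adjoint'' and ``positive definite'' are not even defined for them without additional set-up, and Lemma \ref{poseigval} as stated concerns the product of $R$-matrices over \emph{all} pairs, so for the cross-block partial product you must rerun its continuity proof rather than cite it. Second, even after arranging identifications so that $\tau_1$ corresponds to a positive definite self-adjoint operator $Q$ and $\tau_2$ to a positive semi-definite one $P$, the product $PQ$ of two positive operators is not self-adjoint in general; you need to invoke the self-adjointness of the full $\tau$ (which you have) together with the similarity of $PQ$ and $Q^{1/2}PQ^{1/2}$ to conclude that the spectrum of $\tau$ is non-negative, and you must separately verify that each within-block symmetrizer induces the contravariant form with a \emph{positive} constant --- this is asserted in your sketch but does not follow from Proposition \ref{rephomom}, which only identifies the image up to a twist. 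All of this can be repaired, but the shortest repair is precisely the paper's limit argument, which makes the factorization unnecessary.
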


\begin{proof}
As we have discussed above, to define a unitary form on $\Lab$, we need an isomorphism of $\yt$ representions $\Lab\to \Labop$ that is self-adjoint with respect to the product form defined on $\Lab$ and has positive eigenvalues.

First note that $\tau$ is self-adjoint by the fact that $\sigma_{ij}$ is self-adjoint and lemma \ref{selfadj}. Also note that the eigenvalues of $\tau$ are non-negative since it can be presented as a limit of $R(a_1,\ldots,a_m)$ as in lemma \ref{poseigval} with $a_1>a_2>\ldots>a_m$ and $a_i-a_{i+1}>1$ such that for some $i$'s $a_i-a_{i+1}\to 1$ or $a_i-a_{i+1}\to -1$.

Now we want to show that there is a commutative diagram:

\begin{tikzcd}
N_<(a_1,b_1)\otimes\ldots\otimes N_<(a_n,b_n) \arrow[two heads] {d} {p} \arrow{r} {\tau} & N_>(a_n,b_n)\otimes\ldots\otimes N_>(a_1,b_1)\\
L(a_1,b_1)\otimes\ldots\otimes L(a_n,b_n) \arrow{r}{\tau'} & L(a_n,b_n)\otimes\ldots\otimes L(a_1,b_1) \arrow[u, hook]
\end{tikzcd}

and the map $\tau'$ is the isomorphism we need.

From Proposition \ref{rephomom} it follows that
$$N_<(a_n,b_n)\otimes\ldots\otimes N_<(a_1,b_1)/\ker \tau_{2,i}\simeq N_<(a_n,b_n)\otimes\ldots\otimes L(a_i,b_i)\otimes\ldots\otimes N_<(a_1,b_1),$$ hence $$N_<(a_n,b_n)\otimes\ldots\otimes N_<(a_1,b_1)/\ker \tau_2\simeq L(a_n,b_n)\otimes\ldots\otimes L(a_1,b_1).$$

From this we can see that $\tau$ factors through $\Labop$. Since $\tau_1$ is an isomorphism, $\tau$ is surjective onto $\Labop$.

By lemma \ref{selfadj} we can write $\tau$ as a map that first permutes the factors of $N_<(a_i,b_i)$ and then permutes $N_<(a_i,b_i)$'s. The first map is surjective onto $\Lab$ and the second is an isomorphism. Hence $\tau$ factors through $\Lab$.

Since $\dim \Lab=\dim \Labop$, it follows that the restriction of $\tau$ gives an isomorphism $\Lab\to \Labop$.

The map $\tau=\sigma^R(a_1,a_1-1,\ldots,b_1+1,a_2,\ldots,b_2+1,\ldots,a_n,\ldots,b_n+1)$. Hence by Lemma \ref{selfadj}, $\tau$ is self-adjoint and hence has real eigenvalues. We can view $\tau$ as a limit of maps $\sigma^R(a'_1,\ldots,a'_{n'})$ for $n'=\sum_{i=1}^n(a_i-b_i)$ such that $|a'_i-a'_j|>1$ for any $i,j$. From Lemma \ref{poseigval} it follows that the eigenvalues of $\tau$ are real non-negative. Hence the restriction of $\tau$ that gives the isomorphism $\Lab\to \Labop$ has real positive eigenvalues.
\end{proof}

\section{Simple spectrum and covering}
In this section we consider the closure of the subfamily of $\mathcal B$ corresponding to real diagonal matrices. In the closure there will be one limit algebra, generated by $B(\left (\begin{smallmatrix} 1 & 0 \\ 0 & 1 \end{smallmatrix}\right ))$ and $t^{(1)}_{11}-t^{(1)}_{22}$. This subfamily of $\mathcal B$ corresponds to a subvariety $Z'\subset Z$ which is isomorphic to $\mathbb RP^1$ (closure of the subvariety of real diagonal matrices up to scale).

\begin{theorem}
For any $x\in Z'$, $\mathcal B(x)$ has simple spectrum in $\Lab$ such that $a_i,b_i\in\mathbb R$ for any $i$, and for any $1\leqslant i < j\leqslant n$, $S(a_i,b_i)\cup S(a_j,b_j)$ is not a string.
\end{theorem}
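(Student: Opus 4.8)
The plan is to reduce the statement to the following linear-algebra principle and then supply its two hypotheses from the previous sections. \emph{Principle:} if a commutative subalgebra $A\subset\operatorname{End}(V)$ is generated by operators that are self-adjoint with respect to a positive-definite Hermitian form on $V$, and if $A$ has a cyclic vector, then $A$ has simple spectrum. I would prove this as follows. The self-adjoint generators commute, so by the spectral theorem they are simultaneously diagonalizable and $V$ splits as an orthogonal sum $V=\bigoplus_{\chi}V_{\chi}$ over the distinct joint characters $\chi\colon A\to\mathbb C$. Writing a cyclic vector as $v=\sum_{\chi}v_{\chi}$ with $v_{\chi}\in V_{\chi}$, every element $a\in A$ acts on $V_{\chi}$ by the scalar $\chi(a)$, so $A\cdot v\subseteq\bigoplus_{\chi}\mathbb C v_{\chi}$. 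Cyclicity forces $\dim V=\dim(A\cdot v)\leqslant\#\{\chi\}\leqslant\dim V$, whence each $V_{\chi}$ is one-dimensional, i.e.\ the spectrum is simple.

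To apply this I need a positive-definite unitary form on $\Lab$ together with self-adjointness of the generators of $\mathcal B(x)$ for $x\in Z'$. For the form I would use the construction of Section 7: the tensor product of the $\mathfrak{gl}_2$-unitary forms on the factors $L(a_i,b_i)$, composed with the self-adjoint intertwiner $\tau\colon\Lab\to\Labop$ of Theorem \ref{hermform}, defines a Hermitian form making $\Lab$ unitary, so that $t_{ij}(u)^{*}=t_{ji}(u)$; this form is positive-definite precisely when $\tau$ has positive eigenvalues. The cyclic vector is supplied directly by Theorem \ref{cyclvect}, whose hypothesis (the union of no two strings is a string) coincides with the one in the present statement.

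It then remains to check that $\mathcal B(x)$ is generated by self-adjoint operators. The coefficients of $\operatorname{qdet}T(u)$ are central, and since $\Lab$ is irreducible under the string condition they act by scalars, so they do not refine the joint-eigenspace decomposition. For $x$ a genuine (non-limit) point the remaining generators are the coefficients of $\operatorname{tr}CT(u)=c_{11}t_{11}(u)+c_{22}t_{22}(u)$ with $C$ real diagonal; since $t_{ii}(u)^{*}=t_{ii}(u)$ and $c_{11},c_{22}\in\mathbb R$, these are self-adjoint. For the single limit point the extra generator is $h=t_{11}^{(1)}-t_{22}^{(1)}$, which is likewise self-adjoint because $(t_{11}^{(1)})^{*}=t_{11}^{(1)}$ and $(t_{22}^{(1)})^{*}=t_{22}^{(1)}$. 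Thus in every case $\mathcal B(x)$ is generated by self-adjoint operators, the Principle applies, and simple spectrum follows.

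The main obstacle is the positive-\emph{definiteness} of the form, not its self-adjointness: self-adjoint operators with respect to an \emph{indefinite} Hermitian form need not be diagonalizable, so the spectral-theorem step would collapse. Hence the crux is the positivity of the eigenvalues of $\tau$, i.e.\ Lemma \ref{poseigval} together with the limiting argument of Theorem \ref{hermform}; this is also the step where the precise string condition must be used with care, since one must reorganize the factors (grouping the strings coset by coset and ordering them) so that the relevant $R$-matrix eigenvalues entering $\tau$ remain nonnegative in the limit. Once positivity is secured, the remainder is the soft combination of cyclicity and the spectral theorem described above.
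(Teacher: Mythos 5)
Your proposal is correct and takes essentially the same route as the paper: combine the positive-definite unitary form of Theorem \ref{hermform}, self-adjointness of the generators of $\mathcal B(x)$ for real diagonal (and limit) parameters, and the cyclic vector from Theorem \ref{cyclvect}, then conclude simple spectrum by simultaneous diagonalization. The paper's own proof is just a terser version of yours, asserting without verification both the self-adjointness of the generators and the principle that cyclicity plus simultaneous diagonalizability forces one-dimensional joint eigenspaces, which you spell out correctly.
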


\begin{proof}
By theorem \ref{hermform} we have a Hermitian form on $\Lab$ and the generators of $\mathcal B(x)$ act by self-adjoint operators with respect to this form. Hence the elements of $\mathcal B(x)$ can be all diagonalized in the same basis. By theorem \ref{cyclvect} we have a cyclic vector in $\Lab$ with respect to $\mathcal B(x)$. Hence $\mathcal B(x)$ has simple spectrum in $\Lab$.
\end{proof}

If an algebra $\mathcal A$ acts on a vector space $V$ with simple spectrum, there are $n$ distinct algebra maps $\psi_1, \ldots , \psi_n \colon \mathcal A \to \mathbb C$ such that for each $i$ the eigenspace
$$E_i = \{v \in V \mid a\cdot v = \psi_i(a)v, \text{ for all } a \in\mathcal A\}$$
is one-dimensional. Then $V=E_1 \oplus \ldots \oplus E_n$ and we call $\mathcal E_{\mathcal A}(V):=\{E_1, \ldots, E_n\}$ the set of eigenlines for the action of $\mathcal A$ on $V$.

From the condition that, for any $1\leqslant i < j\leqslant n$, $S(a_i,b_i)\cup S(a_j,b_j)$ is not a string and they are real, it follows that the set of allowed $(a_1,b_1,\ldots,a_n,b_n)$ is a contractible subset $X\subset\mathbb R^{2n}$.

\begin{corol}
Consider $\mathcal E_{\mathcal B(x)}(\Lab)$. For each value of the parameters $(x,a_1,b_1,\ldots,a_n,b_n)\in\mathbb RP^1\times X$ it is a set of $\dim \Lab$ elements that depends smoothly on the parameters. Hence we get an $n$-fold covering of $\mathbb RP^1\times X$.
\end{corol}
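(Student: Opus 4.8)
The plan is to exhibit the projection $\pi\colon\mathcal P\to\mathbb{RP}^1\times X$ from the incidence space of eigenlines as a smooth covering map, by combining the constant fibre cardinality supplied by the simple spectrum theorem above with the smooth dependence of spectral projections for self-adjoint operators having distinct eigenvalues. Here $\mathcal P=\{(p,l)\mid p=(x,a_1,b_1,\ldots,a_n,b_n)\in\mathbb{RP}^1\times X,\ l\in\mathbb P(\Lab)\text{ an eigenline for }\mathcal B(x)\}$ and $\pi$ is the projection to the base. By the preceding theorem, for every $p$ the fibre $\pi^{-1}(p)$ consists of exactly $d:=\dim\Lab$ points, so it remains only to show that $\pi$ is a local diffeomorphism.

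First I would record that the action is smooth in the parameters. Using the explicit generators of $\mathcal B(x)$ from Proposition~\ref{limalg} (the coefficients of $\operatorname{qdet}T(u)$ and of $\operatorname{tr}CT(u)$, together with the distinguished element of $\sll$), the matrices by which these generators act on $\Lab$ have entries that are polynomial in $a_1,b_1,\ldots,a_n,b_n$ and in homogeneous coordinates on the line $Z'\cong\mathbb{RP}^1$. Hence we obtain a family of pairwise commuting operators on the fixed vector space $\Lab$ depending smoothly (indeed real-analytically) on $p$. Moreover the Hermitian form of Theorem~\ref{hermform} is assembled from the maps $\sigma^R$, whose matrices are rational in the $a_i,b_i$ and regular on $X$; it is positive definite there, so all these operators are self-adjoint with respect to a smoothly varying positive definite form and are in particular diagonalizable with real eigenvalues.

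The heart of the argument is the construction of smooth local sections of $\pi$. Fix $p_0\in\mathbb{RP}^1\times X$. Since $\mathcal B(x_0)$ has simple spectrum, the $d$ joint-eigenvalue characters $\psi_1,\ldots,\psi_d$ are pairwise distinct; as there are finitely many, a generic self-adjoint element $A\in\mathcal B(x_0)$ already separates all eigenlines, i.e.\ the real numbers $\psi_1(A),\ldots,\psi_d(A)$ are pairwise distinct. Having distinct eigenvalues is an open condition, so in a neighbourhood $U$ of $p_0$ the corresponding operator $A(p)$ retains $d$ distinct eigenvalues. Choosing small disjoint contours $\gamma_i$ enclosing each eigenvalue at $p_0$, the Riesz projections
$$P_i(p)=\frac{1}{2\pi i}\oint_{\gamma_i}\bigl(z-A(p)\bigr)^{-1}\,dz$$
are defined and smooth in $p\in U$ and have rank one. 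Because $\mathcal B(x)$ is commutative and $A(p)$ separates its eigenlines, the image of $P_i(p)$ is precisely a joint eigenline of $\mathcal B(x)$; thus $p\mapsto\operatorname{Im} P_i(p)$ are $d$ smooth sections of $\pi$ over $U$ whose values enumerate $\pi^{-1}(p)$, exhibiting $\pi$ as a local diffeomorphism near $p_0$.

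Finally, $\pi$ is a local diffeomorphism with fibres of constant cardinality $d$ over the base $\mathbb{RP}^1\times X$, which is connected ($X$ being contractible and $\mathbb{RP}^1$ connected); therefore $\pi$ is a smooth $\dim\Lab$-fold covering, as claimed. The main obstacle I anticipate is making precise that the locally chosen separating element $A$ and the resulting Riesz projections genuinely track the joint eigenlines and patch coherently across overlaps; this is resolved by observing that the eigenlines are intrinsic to the commutative algebra $\mathcal B(x)$, so any two local separating choices produce the same canonical rank-one joint eigenprojections where both are defined. A secondary subtlety is the parameter dependence of the Hermitian form, but self-adjointness is used only to guarantee real eigenvalues and diagonalizability; the smoothness of the $P_i$ rests solely on smoothness of $A(p)$ together with the simplicity of its spectrum supplied by Theorem~\ref{introsimpspec}.
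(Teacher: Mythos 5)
Your proof is correct. The paper states this corollary without any proof, treating it as immediate from the simple-spectrum theorem; your argument supplies exactly the smooth-dependence details that are left implicit there (polynomial dependence of the generators of $\mathcal B(x)$ on the parameters, a separating element $A(p)$, locally constant rank-one Riesz projections whose images are the joint eigenlines by commutativity), and this is the standard way to make the covering claim rigorous, with the only caveat being the paper's loose use of ``$n$-fold'' where both you and the introduction correctly mean $\dim\Lab$-fold.
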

 
In the following works we would like to generalize this result to the case of $Y(\mathfrak{gl}_n)$. Also we would like to study the monodromy action of the fundamental group of the base space on this covering.

\bigskip

\footnotesize{
{\bf Inna Mashanova-Golikova} \\
National Research University
Higher School of Economics,\\ Russian Federation,\\
Department of Mathematics, 6 Usacheva st, Moscow 119048;\\
{\tt inna.mashanova@gmail.com}} \\


\begin{thebibliography}{10}
\bibitem[BB]{BB} A. Bjorner, F. Brenti, {\em Combinatorics of Coxeter Groups}, Graduate Texts in Mathematics {\bf 231} Springer, New York (2005)

\bibitem[HKRW]{HKRW} I. Halacheva, J. Kamnitzer, L. Rybnikov, A. Weekes, {\em Crystals and monodromy of Bethe vectors}, arXiv:1708.05105

\bibitem[IR1]{IR1} A. Ilin, L. Rybnikov, {\em Degeneration of Bethe subalgebras in the Yangian of $\mathfrak{gl}_n$}, L. Lett. Math. Phys. {\bf 108} (2018), 1083-1107.

\bibitem[IR2]{IR2} A. Ilin, L. Rybnikov, {\em Bethe subalgebras in Yangians and the wonderful compactification}, arXiv:1810.07308.

\bibitem[KT]{KT} S. Khoroshkin and V. Tolstoy, {\em Yangian double}, Lett. Math. Phys. {\bf 36} (1996), no. 4, 373–402.

\bibitem[M]{M} A. Molev, {\em Yangians and Classical Lie Algebras}, Mathematical Surveys and Monographs {\bf 143} (2007).

\bibitem[MTV1]{MTV2} E. Mukhin, V. Tarasov, A. Varchenko, {\em Bethe eigenvectors of higher transfer matrices},  Journal of Stat. Mech.: Theory and Experiment {\bf 8} (2006), 1-44.

\bibitem[MTV2]{MTV1} E. Mukhin, V. Tarasov, A. Varchenko, {\em Bethe algebra of homogeneous XXX Heisenberg model},  Commun. Math. Phys. {\bf 288} (2009), 1–39.

\bibitem[NO]{NO} M. Nazarov, G. Olshanski, {\em Bethe subalgebras in twisted Yangians}, Commun. Math. Phys. {\bf 178} (1996), no. 2, 483–506

\bibitem[Sh]{Sh} N. N. Shapovalov, {\em On a bilinear form on the universal enveloping algebra of a complex semisimple Lie algebra}, Funct. Anal. Its Appl. {\bf 6} (1972), 307-312.

\bibitem[T]{T} V. Tarasov, {\em Structure of quantum L-operators for the R-matrix of the XXZ-model}, Theor. Math. Phys. {\bf 61} (1984), 1065–1071.

\bibitem[TF]{TF} L.A. Takhtajan, L.D. Faddeev, {\em Quantum inverse scattering method and the Heisenberg XY Z-model}, Russian Math. Surv. {\bf 34} (1979), no. 5, 11-68.

\end{thebibliography}
\end{document}